\def\A{{\mathcal {A}}}
\def\P{{\mathcal {P}}}
\def\S{{\mathcal {S}}}
\DeclareMathSymbol{\Z}{\mathbin}{AMSb}{"5A}
\begin{document}
\title{Ergodic properties of Kantorovich operators}

\author{
Malcolm Bowles\thanks{Department of Mathematics, Rutgers University, New Brunswick, NJ, USA}   \quad   and \quad 
Nassif  Ghoussoub\thanks{Department of Mathematics,  The University of British Columbia, Vancouver BC Canada V6T 1Z2.
Partially supported by a grant from the Natural Sciences and Engineering Research Council of Canada.} 
}

\maketitle

\begin{abstract}  \textit{Kantorovich operators} are non-linear extensions of \textit{Markov operators} and are omnipresent in several branches of mathematical analysis. The asymptotic behaviour of their iterates plays an important role even in classical ergodic, potential and probability theories, which are normally concerned with linear Markovian operators, semi-groups, and resolvents.
The Kantorovich operators that appear implicitly in these cases, though non-linear, are all positively $1$-homogenous. General Kantorovich operators amount to assigning ``a cost" to most operations on measures and functions normally conducted ``for free" in these classical settings. Motivated by extensions of the Monge-Kantorovich duality in mass transport, the stochastic counterpart of  Aubry-Mather theory for Lagrangian systems,  weak KAM theory \`a la Fathi-Mather, and ergodic optimization of dynamical systems, we study the asymptotic properties of general Kantorovich operators.

  \end{abstract}
  
  \tableofcontents

\section{Introduction}  
The introduction in \cite{BG2} of Kantorovich operators, which are important non-linear extensions of \textit{Markov operators} and which are ubiquitous in several aspects of mathematical analysis, was motivated by the duality established by Kantorovich in the context of the Monge mass transport problem, hence their name. The fact that their iterates, when they originate from optimal mass transports, play an important role in the dynamics of Hamilton-Jacobi equations developed by Mather, Aubry, Fathi and others, was first noted by Bernard and Buffoni \cite{BB1, BB2}. Our motivation was to extend this approach to the stochastic counterparts of Fathi-Mather weak KAM theory and of ergodic optimization in stochastic symbolic dynamics which do not correspond to to classical mass transports but to the more encompassing notion of {\it linear transfers} introduced in \cite{BG1, BG2}. In retrospect, we realized that Kantorovich operators -and their iterates- already appeared in several classical branches of analysis, including ergodic, probability, and potential theories, which are normally concerned with  the analysis of linear Markov operators, semi-groups, and resolvents. 

First, recall that a {\bf Markov operator} is a positive linear continuous map $T: C(Y) \to C(X)$ such that $T1=1$, where $C(K)$ is the space of continuous functions on a compact space $K$. To define Kantorovich operators, we consider $USC(\Omega)$ (resp., $LSC(\Omega)$ to be the space of proper, bounded above, upper semi-continuous), (resp., proper, bounded below, lower semi-continuous) functions on $\Omega$ that are valued in $\R\cup \{+\infty\}\cup \{-\infty\}$. By {\bf proper}, we mean here a function $f$ such that there exists at least one element $\omega\in \Omega$ such that $-\infty <f(\omega)<+\infty$. 

\begin{definition} {\bf A backward 
Kantorovich operator} is a map $T^-: C(Y) \to USC(X)$, which satisfies the following properties:  It is

  \begin{enumerate}
 \item 
  {\it monotone increasing}, i.e., if $g_1, g_2 \in C(Y)$ and $g_1\leq g_2$,   
  then $T^-g_1\leq T^-g_2$. 
 \item 
 {\it affine on the constants}, i.e., for any $c\in \R$ and $g\in C(Y)$, 
$T^-(g+c)=T^-g +c$.
 \item  {\it convex}, i.e., 
 for $\lambda \in [0, 1]$, $g_1, g_2 \in C(Y)$,
$T^-(\lambda g_1+(1-\lambda)g_2)\leq \lambda T^-g_1+(1-\lambda)T^-g_2.$
\item {\it lower semi-continuous} in the following sense,  
if $g_n \to g$ in $C(Y)$,  
 then $T^-g\leq \liminf\limits_{n \to \infty}T^-g_n $.
 \end{enumerate}
{\bf A forward Kantorovich operator} is a map $T^+: C(X) \to LSC(Y)$ that satisfies 1), 2), as well as:\\

3') {\rm concave},  i.e., 
 for any $\lambda \in [0, 1]$, $f_1, f_2 \in C(X)$,
$T^+(\lambda f_1+(1-\lambda)f_2)\geq \lambda T^+f_1+(1-\lambda)T^+f_2.
$\\

4') {\it upper-semi-continuous}, i.e., if $f_n \to f$ in $C(X)$,  
 then $T^+f\geq \liminf\limits_{n \to \infty}T^+f_n $.
\end{definition}
 It is clear that Markov operators are both backward and forward Kantorovich operators. A bit surprising is that non-linear Kantorovich operators and their iterates appeared often, even in studies of linear Markov operators.
For example, if $S: C(X)\to C(X)$ is a Markov operator, then 
  \eq{T^-g=g\vee Sg  \quad \hbox{(resp.,\,\, $T^+f=f\wedge Sf$}),}
is a backward (resp., forward) Kantorovich operator, and their iterates
   converge to another important idempotent backward (resp., forward) Kantorovich operator: The {\it r\'eduite}  
 \eq{T^-_\infty g=\hat g, \quad \hbox{ resp.,\,\, $T_\infty^+f=\check{f}$},}
  where $\hat h$ (resp., $\check{h}$) is the least $T$-superharmonic function above (resp., the greatest $T$-subharmonic function below) $g$.
  
  The {\em filling scheme} associated with $S$,  
\eq{T^-g=Sg^+-g^- \quad \hbox{(resp.,\,\, $T^+f=f^+-Sf^-$}),}
is a backward (resp., forward) Kantorovich operator, whose iterates play a crucial role in the proof of the celebrated Chacon-Ornstein theorem \cite{K}, and in the construction of solutions for Poisson equations associated with $S$ \cite{Meyer}.

    Kantorovich operators and their iterates appear in {\it potential theory}. For example,  iterates of the operator  
 \eq{
T^- g(x) =\sup_{r \geq 0} \bigg\{ \int_{B} g (x + r y) \,dm(y);\,  x + r \overline{B}\} \subset O \bigg\},
}
where $B$ is the open unit ball in $\R^n$ centered at $0$, and $m$ is normalized Lebesgue measure on $\R^n$, lead to the {\it superharmonic envelope via optimal stopping} \cite{GKL2},	
 \eq{T_\infty^-g(x)
		:= \sup\Big\{{\mathbb E}^{x}\Big[g(B_\tau)\Big]; \, \tau \ge 0 \,\, {\rm stopping\, time}, {\mathbb E}^{x}[\tau]<+\infty
		\Big\}.
}
	The expectation $\mathbb{E}^{x}$  refers
	to Brownian motions $(B_t)_t$ starting at $x$.

Similarly, in the theory of {\it several complex variables},  if $X$ is the closure of an open bounded domain $O$ in $\C^n$, then  the operator: 
 \eq{
T^- g(x):= \sup_{v\in \R^n} \bigg\{ \int ^{2\pi}_0 g(x +
 e^{i\theta}v) {d\theta \over 2\pi};\,  x + \bar \Delta v\subset O \bigg\}, 
 }
where $\Delta = \{ z \in {\mathbb C}, \vert z\vert < 1\}$ is the open unit disc in $\C$,  is a backward Kantorovich operator, and its iterates lead to the idempotent backward Kantorovich operator giving the  {\it plurisuperharmonic envelopes} of functions on $O$ \cite{GM}, 
 \eq{
T_\infty^-g(x)=\sup \left\{ \int^{2\pi}_0 g (P(e^{i\theta})) {d\theta
\over 2\pi};\ \hbox{\rm $P$ polynomial, $P(\bar \Delta)\subset O$.   
 $P(0) = x$}\right\}
 }

So far, all examples mentioned above are {\bf positively $1$-homogenous} Kantorovich operators, i.e., those they satisfy 
 \begin{equation}
 \hbox{$T(\lambda f)=\lambda Tf$ for all $\lambda \geq 0$.}
 \end{equation} 
 
 They all can be characterized as the maximal expected gain 
 \eq{\lbl{max.gain}
 T^-g(x):=\sup\{\int_X g\, d\sigma; (\delta_x, \sigma)\in \S\},}
 associated to a reward function $f$, where  $\S \subset \P(X)\times \P(Y)$ is a ``gambling house", i.e.,  the section $\S_x=\{\sigma \in \P(X); (\delta_x, \sigma)\in \S\}$, which is the collection of distributions of gains available to a gambler having wealth $x$, is non-empty \cite{DM}. Moreover,  the gambling house is governed by rules described in terms of {\it a balayage order}. Indeed, the following holds. 
 
 \begin{theorem} \cite{G} Let $T$ be a map from $C(Y)$ to $USC(X)$, where $X$ and $Y$ are two compact metric spaces. The following are then equivalent: 
\begin{enumerate}
\item $T$ is a $1$-homogenous backward Kantorovich operator. 

\item There exists a closed convex, max-stable balayage cone $\A$ of lower semi-continuous functions on the disjoint union  $X \bigsqcup Y$ such that 
 \begin{equation}
  Tg(x):=\sup\{\int_X g\, d\sigma;\,  \delta_x  \prec_\A \sigma \}, 
   \end{equation}
where for $(\mu, \nu) \in \P(X)\times \P(Y)$, 
\[
\hbox{$\mu \prec_\A \nu$\quad iff \quad $\int_X\phi d\mu \leq \int_Y\phi d\nu$ for all $\phi \in \A$.} \quad {\rm (Restricted \,\, balayage)}.
\]
\end{enumerate}
The gambling house is then, 
 \begin{equation}
\S=\{(\mu, \nu) \in \P(X)\times \P(Y);\, \mu \prec_\A \nu \}=\{(\mu, \nu) \in \P(X)\times \P(Y);\, \nu \leq T_\#\mu  \},
   \end{equation}
where $T_\#\mu (g)=\int_XTg \, d\mu$ for every $g\in C(Y)$. 
\end{theorem}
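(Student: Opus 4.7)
The easy implication $(2) \Rightarrow (1)$ is a direct verification: writing $Tg(x) = \sup_{\sigma \in \S_x} \int_Y g\, d\sigma$ with $\S_x := \{\sigma \in \P(Y) : \delta_x \prec_\A \sigma\}$, one reads off monotonicity, positive $1$-homogeneity, convexity, and affineness on constants from the fact that each integrand $g \mapsto \int g\, d\sigma$ is linear and that each of these properties is preserved under pointwise suprema of linear functionals; that the elements of $\S_x$ are probability measures is enforced by the constants sitting in $\A$, and upper semi-continuity of $Tg$ on $X$ follows from the corresponding regularity of the set-valued map $x \mapsto \S_x$.

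For the substantive direction $(1) \Rightarrow (2)$, my plan is to first establish a pointwise support-function representation for $T$ and then recover $\A$ canonically from it. \emph{Step 1 (pointwise dualization).} Fix $x \in X$. Axioms (2)--(3) together with positive $1$-homogeneity make $L_x \colon g \mapsto Tg(x)$ a sublinear functional on $C(Y)$; monotonicity together with $L_x(\pm c) = \pm c$ (a consequence of affineness on constants and $T(0) = 0$) forces $|L_x(g)| \leq \|g\|_\infty$, so $L_x$ is $1$-Lipschitz and in particular continuous. By the Hahn--Banach theorem,
\[
Tg(x) \;=\; L_x(g) \;=\; \sup\{\ell(g) : \ell \in C(Y)^*,\ \ell \leq L_x\}.
\]
Each such $\ell$ is positive (by monotonicity) and satisfies $\ell(1) = 1$ (from $\ell(\pm 1) \leq L_x(\pm 1) = \pm 1$), hence corresponds to a probability measure on $Y$. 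Setting $\S_x := \{\sigma \in \P(Y) : \int g\, d\sigma \leq Tg(x)\ \forall g \in C(Y)\}$, one obtains $Tg(x) = \sup_{\sigma \in \S_x} \int g\, d\sigma$.

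\emph{Step 2 (construction of $\A$).} Define
\[
\A \;:=\; \Bigl\{\phi \in LSC(X \sqcup Y) : \phi|_X(x) \leq \int_Y \phi|_Y\, d\sigma\ \text{for all } x \in X,\ \sigma \in \S_x \Bigr\}.
\]
From the definition, $\A$ is tautologically a closed convex cone; it is max-stable because
$(\phi_1 \vee \phi_2)|_X(x) = \phi_1|_X(x) \vee \phi_2|_X(x) \leq \bigl(\int \phi_1|_Y\, d\sigma\bigr) \vee \bigl(\int \phi_2|_Y\, d\sigma\bigr) \leq \int (\phi_1|_Y \vee \phi_2|_Y)\, d\sigma$. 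Crucially, for every $g \in C(Y)$ the pair $(-Tg, -g) \in LSC(X) \times C(Y)$ lies in $\A$, since the defining inequality $-Tg(x') \leq \int(-g)\, d\sigma'$ is nothing but the definition of $\S_{x'}$. \emph{Step 3 (identification of $\prec_\A$).} Testing $\delta_x \prec_\A \sigma$ against the elements $(-Tg, -g) \in \A$ yields $\int g\, d\sigma \leq Tg(x)$ for every $g \in C(Y)$, hence $\sigma \in \S_x$; the converse is immediate from the definition of $\A$. Combined with Step 1, this gives the sought representation $Tg(x) = \sup\{\int g\, d\sigma : \delta_x \prec_\A \sigma\}$, and the same argument applied to arbitrary $(\mu, \nu) \in \P(X) \times \P(Y)$ shows $\mu \prec_\A \nu \iff \int g\, d\nu \leq \int Tg\, d\mu \iff \nu \leq T_\#\mu$, giving the stated gambling-house description.

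The main obstacle is Step 1: extracting, from nothing but the semi-abstract axioms and $1$-homogeneity, a pointwise family of probability measures whose support function is $Tg$. This is precisely where Hahn--Banach does the work, and where positive $1$-homogeneity is essential, since without it the dual description would involve affine rather than linear functionals and the family $\S_x$ would have to be replaced by a more delicate epigraphical object. Once the pointwise representation is secured, Steps 2--3 are essentially formal: the canonical cone $\A$ built from $\{\S_x\}_{x \in X}$ automatically has the required convexity, max-stability, and closedness, and by design encodes exactly the balayage relation described in the theorem.
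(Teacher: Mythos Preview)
The paper does not give its own proof of this theorem: it is quoted in the Introduction with the citation \cite{G}, and the related material in Section~2 carries the blanket remark ``Full proofs can be found in \cite{G}.'' There is therefore no in-paper argument to compare your proposal against.

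On its own merits, your outline is the standard and correct route. Two technical points deserve tightening. First, the final equivalence $\mu\prec_{\A}\nu \iff \nu\le T_\#\mu$ for \emph{general} $\mu$ is not literally ``the same argument'' as the Dirac case: the direction $\nu\le T_\#\mu \Rightarrow \mu\prec_{\A}\nu$ requires, for an arbitrary $\phi\in\A$, the pointwise bound $\phi|_X(x)\le -T(-\phi|_Y)(x)$, which your definition of $\A$ delivers directly only when $\phi|_Y\in C(Y)$; for $\phi|_Y$ merely lower semi-continuous you must invoke the extension of $T$ to $USC(Y)$ (equivalently of $g\mapsto -T(-g)$ to $LSC(Y)$) via its capacity property, or argue by monotone approximation from below. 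Second, ``tautologically closed'' is not meaningful until you fix a topology on $LSC(X\sqcup Y)$. Neither point is a genuine obstacle, and the core mechanism---Hahn--Banach to get the pointwise support-function representation $Tg(x)=\sup_{\sigma\in\S_x}\int g\,d\sigma$, followed by the canonical cone of lower semi-continuous functions compatible with every fiber $\S_x$---is exactly right.
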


 The simplest non-homogenous (yet affine) Kantorovich operator appears in ergodic optimization of symbolic dynamics  \cite{Ga, GL}. It has the form 
  \eq{
T^-g(x) := g\circ \sigma(x) - A(x),
}
where $A$ is a given lower semi-continuous potential $A$ and $\sigma$ is a point transformation. Its iterates lead to minimizing the action $\mu \mapsto \int_{X}Ad\mu$ among all $\sigma$-invariant measures $\mu$.

As mentioned before, our terminology originates in the theory of optimal mass transport involving  a lower semi-continuous cost function $c(x, y)$ on a product space $X\times Y$. The operator 
\eq{
T^-g(x)=\sup_{y\in Y} \{g(y)-c(x, y)\} \quad \hbox{ (resp.,\,\, $T^+f(y)=\inf\limits_{x\in X} \{f(x)+c(x, y)\}$)},
}
is then a backward (resp., forward Kantorovich operator)  \cite{V}, which is not necessarily positively $1$-homogenous. For example one that appears in Brenier's work \cite{B} is 
\eq{
T^-g=-g^* \quad \hbox{(resp., $T^+f=(-f)^*$)},
}
where $\phi^*$ is the Legendre transform of a function $\phi$. 

Another related example is the Kantorovich operator 
\eq{
T^-_\nu g(x)=\epsilon \log \int_{Y}e^{\frac{g(y)-c(x, y)}{\epsilon}}d\nu (y),
} 
where $\nu \in \P(X)$, which arises from an entropic regularization of the above optimal mass transport with cost $c$, as well as the composition $T^-_\nu\circ T^-_\mu$, where $\mu$ is another probability in $\P(X)$, whose iterates are the building clocks of the Sinkhorn algorithm \cite{Sa}. 

Another important example arises in the study Hamiltonian dynamics \cite{BB1}. Indeed, if  $L: TM \to \R\cup\{+\infty\}$ is a given \textit{Tonelli Lagrangian} on a given smooth compact Riemannian manifold $M$ without boundary, with tangent bundle $TM$, and $H$ is the associated Hamiltonian, then the operator 
\begin{equation}\label{value.2}
T^-g(x) :=\sup\Big\{g(\gamma (1))-\int_0^1L(\gamma (s), {\dot \gamma}(s))\, ds; \gamma \in C^1([0, 1), M);   \gamma(0)=x\Big\},
\end{equation}
which associates to a state $g$ at time $1$, the initial state of the  viscosity solution for the backward  Hamilton-Jacobi equation,  
 \eq{\label{HJ.0} 
\left\{ \begin{array}{lll}
\partial_tV+H(x, \nabla_xV)&=&0 \,\, \text{ on }\,\,  (0, 1)\times M\\
\hfill V(1, x)&=&g(x),
\end{array}  \right.
}
is a backward Kantorovich operator, whose iterates lead to Fathi's weak KAM solutions \cite{F} and Mather's minimal measures \cite{Mat}. Similarly, 
\eq{
T^+ f(y) := \inf\{ f(\gamma(0)) + \int_{0}^{1}L(\gamma(s), \dot{\gamma}(s))d s\,;\, \gamma \in C^1([0,1]; M), \gamma(1) = y\},
}
is a forward Kantorovich operator, which associates to an initial state $f$, the state  at time $1$ of the viscosity solution for the associated forward Hamilton-Jacobi equation.

Non-homogenous Kantorovich operators can be characterized as a gambling house that charges fees, i.e., a gambler with wealth $x$, incurs a cost $c(x, \sigma)$ each time they choose a distribution of gains $\sigma$. This is an important characterization that allows for their extension beyond continuous functions as {\it functional Choquet capacities}. 

\begin{theorem} \cite{G}  Let $T$ be a map from $C(Y)$ to $USC(X)$. The following are then equivalent:
\begin{enumerate}

\item $T$ is  a backward Kantorovich operator from $C(Y)$ to $USC(X)$. 

\item There exists a proper lower semi-continuous cost functional $c: X\times {\mathcal P}(Y)\to  \R \cup\{+\infty\}$ with $\sigma \mapsto c(x, \sigma)$ convex for each $x\in X$, and a balayage cone $\A \in LSC(X\sqcup Y)$ such that
 \eq{
Tg(x) := \sup\{ \int_{Y}gd\sigma - c(x,\sigma)\,;\, \sigma \in {\cal P}(Y)\,{\rm and}\,\, \delta_x\prec_\A \sigma \}. 
}
\end{enumerate}
\end{theorem}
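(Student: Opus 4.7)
Prove the two directions separately. The forward direction (2) $\Rightarrow$ (1) is a routine verification from the sup representation. The main work is (1) $\Rightarrow$ (2): apply Fenchel--Moreau duality to the convex lsc functional $g\mapsto Tg(x)$ pointwise in $x$, and use the axioms to identify the dual variables with probability measures. A trivial balayage cone then absorbs the constraint side of the representation.

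\textbf{(2) $\Rightarrow$ (1).} Monotonicity follows from $g_1\leq g_2\Rightarrow \int g_1\, d\sigma\leq \int g_2\, d\sigma$; affinity on constants uses $\sigma(Y)=1$; convexity of $g\mapsto Tg(x)$ follows because $T$ is a supremum of affine-in-$g$ functionals $g\mapsto \int g\, d\sigma-c(x,\sigma)$; and lower semi-continuity in $g$ follows since $g_n\to g$ uniformly gives $\int g\, d\sigma-c(x,\sigma)\leq \liminf_n \bigl[\int g_n\, d\sigma-c(x,\sigma)\bigr]\leq \liminf_n Tg_n(x)$ for each admissible $\sigma$, and taking the supremum preserves the inequality.

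\textbf{(1) $\Rightarrow$ (2).} Fix $x\in X$ and set $\Phi_x(g):=Tg(x)$. Axiom 4 makes $\Phi_x$ lower semi-continuous on $C(Y)$, axiom 3 makes it convex, and axioms 1--2 show that if $\Phi_x(g_0)=-\infty$ for some $g_0$, then for any $g$ we may pick a constant $M$ with $g\leq g_0+M$, so $\Phi_x(g)\leq \Phi_x(g_0+M)=\Phi_x(g_0)+M=-\infty$; hence $\Phi_x$ is either identically $-\infty$ or everywhere finite (it is bounded above since $Tg$ is usc on compact $X$). Define the Legendre transform on the dual space $M(Y)$ of signed Radon measures on $Y$,
\[
c(x,\mu):=\sup_{g\in C(Y)}\bigl\{\langle\mu,g\rangle-\Phi_x(g)\bigr\},
\]
and set $c(x,\cdot)\equiv+\infty$ at those $x$'s where $\Phi_x\equiv-\infty$. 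Axiom 1 forces $c(x,\mu)=+\infty$ whenever $\mu\not\geq 0$: pick $g\geq 0$ continuous with $\langle\mu,g\rangle<0$, test against $-tg$, use $\Phi_x(-tg)\leq \Phi_x(0)$ by monotonicity, and let $t\to\infty$. Axiom 2 forces $c(x,\mu)=+\infty$ unless $\mu(Y)=1$, by testing against arbitrary constants. Hence the Fenchel--Moreau theorem gives
\[
Tg(x)=\Phi_x^{**}(g)=\sup_{\sigma\in\P(Y)}\Bigl\{\int_Y g\, d\sigma-c(x,\sigma)\Bigr\}.
\]
Joint lower semi-continuity of $c$ on $X\times\P(Y)$ (weak-* topology) follows because each $(x,\sigma)\mapsto \int g\, d\sigma-Tg(x)$ is lsc ($Tg$ usc in $x$, $\sigma\mapsto \int g\, d\sigma$ continuous), and the supremum of lsc functions is lsc. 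Convexity of $\sigma\mapsto c(x,\sigma)$ is immediate as a supremum of affine functions. For the balayage cone, take the trivial cone $\A$ of constant functions on $X\sqcup Y$, for which $\delta_x\prec_\A\sigma$ holds for every $(x,\sigma)$.

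\textbf{Main obstacle.} The chief difficulty is reconciling the pointwise-in-$x$ application of Fenchel--Moreau with the global regularity and properness of $c$. Properness of $T$ guarantees some $x_0$ with $T0(x_0)\in\R$; at this $x_0$, $\Phi_{x_0}$ is a genuine proper convex lsc functional on $C(Y)$, so by standard convex analysis $\Phi_{x_0}^*$ is proper on $M(Y)$, yielding $(x_0,\sigma_0)$ with $c(x_0,\sigma_0)<\infty$. A related subtlety is that the pair $(c,\A)$ is highly non-unique: the trivial choice of $\A$ loads all information into $c$, whereas if $T$ is positively $1$-homogeneous one may instead take $c\in\{0,+\infty\}$, placing all information in $\A$ and recovering the preceding $1$-homogeneous representation theorem as a special case.
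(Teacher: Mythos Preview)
This theorem is cited from \cite{G} and is not proved in the present paper; the only indication the paper gives is the correspondence sketched in Section~2, where the transfer $\T$ is defined as the Legendre dual of $g\mapsto\int_X Tg\,d\mu$ and the cost is recovered as $c(x,\sigma)=\T(\delta_x,\sigma)$. Your argument follows exactly this route---pointwise Fenchel--Moreau duality on $C(Y)$, with axioms 1 and 2 forcing the conjugate variable to lie in $\P(Y)$---and is correct. The choice of the trivial balayage cone $\A$ (constants) is legitimate since the statement only asserts existence of \emph{some} cone; as you note, the pair $(c,\A)$ is not unique, and the nontrivial cone only becomes essential in the positively $1$-homogeneous case treated in the preceding theorem.
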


We note that in all examples mentioned above of backward Kantorovich operators, there is always a corresponding forward Kantorovich operator, mostly related through the formula 
\eq{
T^+f=-T^-(-f), 
}   
because, as we shall see later, they correspond to ``symmetric transfers of mass distributions". In general, Kantorovich operators are one-sided. 
For example, in stochastic mass transfer problems \cite{Go, MT}, one considers 
the cost function 
\begin{align}\label{Primal with fixed end time}
c (x,\sigma) :=\inf_{\beta} \Big\{\mathbb{E}\Big[\int_0^1 L\big(t,X_t,\beta_t\big)dt\Big];\ dX_t=\beta_t\, dt+dW_t,\ X_0=x,\ X_1\sim \sigma\Big\},
\end{align}
where $W_t$ is Weiner measure and the minimization is taken over all suitable drifts $\beta$.  
Under some assumptions on the Lagrangian $L$, the associated Kantorovich operator is then given by 
$T^-g=J_g(0, x)$
  where $J_g$ is the initial state of the backward second order Hamilton-Jacobi equation 
\begin{equation} \label{PDE with fixed end time}
\begin{cases}
\partial_t J(t,x)  + \frac{1}{2} \Delta J(t,x) + H\big(x,\nabla J(t,x)\big)&= 0 \  \mbox{in\ }\, (0,1) \times \mathbb{R}^d,\\
\hfill J(1,x)&= g(x) \  \mbox{on\ }\,\mathbb{R}^d,
\end{cases}
\end{equation}
where $H$ is the Hamiltonian associated to $L$. 
We shall see  how iterates of this operator lead to a stochastic counterpart of Mather-Fathi theory.

Our main objective in this paper is to study the ergodic properties of general Kantorovich operators. We consider the corresponding {\em Mather constant}, 
 \eq{\lbl{Mane}
c(T) :=  \inf_{\mu \in \P(X)}\sup_{h\in C(X)}\{\int_X(h-T^-h)\, d\mu,
}
and give conditions that guarantees the existence of an upper semi-continuous function $h$, such that 
\eq{
T^- h+c(T)=h,
}
We call such a function $h$, a {\em backward weak KAM solution}. Our terminology is in reference to the work of Mather \cite{Mat}, Aubry \cite{Au}, Man\'e  \cite{Man}, and Fathi \cite{F} on Hamiltonian dynamics and the associated Hamilton-Jacobi equations. Our premise is that there are ``several weak KAM theories"  out there to which this general theory is applicable. For example, the stochastic counterpart of  Hamiltonian dynamics, but also discrete versions as in the theory of ergodic minimization in both a deterministic and stochastic setting. 

In section 2, we give a quick review of the basic properties of Kantorovich operators that will be used in the paper. We exhibit the one-to-one correspondence between linear transfers, optimal balayage transport and Kantorovich operators. We also recall that they are Choquet functional capacities, which will be crucial to their limiting properties. 

Section 3 introduces the {\em Mather constant} and the {\em Man\'e constant} associated to a Kantorovich operator, as well as the first results on the existence of weak KAM solutions. 

In Section 4, we construct {\em weak KAM operators} associated to a Kantorovich operator. These are idempotent  Kantorovich operators $T_\infty$ that commute with $T$ and which map $C(X)$ onto the cone of weak KAM solutions in $USC(X)$. 

We give in Section 5 various applications to classical Mather-Fathi theory and its stochastic counterpart by considering semi-groups of Kantorovich operators, while Section 6 -developed jointly with Dorian Martino \cite{Mar}- deals with applications to deterministic and stochastic ergodic minimization.   

\section{Basic properties of Kantorovich operators} 

 We collect in this section the properties of Kantorovich operators that will be used in the rest of the paper. Detailed proofs can be found in \cite{G}. We shall focus here on probability measures on compact metric spaces, even though the right settings for most applications and examples are complete metric spaces, Riemannian manifolds, or at least $\R^n$. This will allow us to avoid the usual functional analytic complications, and concentrate on the conceptual aspects of the theory. The simple compact case will at least point to results that can be expected to hold and be proved --albeit with additional analysis and suitable hypothesis -- in more general situations. 

The rich and flexible structure of Kantorovich operators stems from their duality -via Legendre transform- with certain weak$^*$-lower semi-continuous and convex functionals $\T:\mcal{M}(X) \times \mcal{M}(Y) \to \R\cup \{+\infty\}$, where ${\cal M}(X)$ is the class of signed Radon measures on $X$. The domain of such a functional  will be denoted 
 \eq{D(\T):=\{(\mu, \nu) \in {\mathcal M}(X)\times  {\mathcal M}(Y);  \T (\mu, \nu)<+\infty\}.
}
 We shall consider for each $\mu \in {\mathcal P}(X)$ (resp.,  $\nu \in {\mathcal P}(Y)$), the partial maps ${\mathcal T}_\mu$ on ${\mathcal P}(Y)$ (resp., ${\mathcal T}_\nu$ on ${\mathcal P}(X)$) given by $\nu \to {\mathcal T} (\mu, \nu)$ (resp., $\mu \to {\mathcal T} (\mu, \nu)$). 

\defn{\lbl{lineartransfers} Recall that a functional $\T:\mcal{M}(X) \times \mcal{M}(Y) \to \R\cup \{+\infty\}$ 
 is said to be a \textbf{backward linear transfer} (resp., \textbf{forward linear transfer}) if 
\begin{enumerate}
\item $\T:\mcal{M}(X) \times \mcal{M}(Y) \to \R\cup \{+\infty\}$ is a proper, convex, bounded below, and weak$^*$ lower semi-continuous functional.
\item $D(\T) \subset \P(X)\times \P(Y)$.
\item There exists a map $T^-$ (resp., $T^+$) from $C(Y)$ (resp., $C(X)$) into the space of 
Borel-measurable functions on $X$ (resp., on $Y$) such that for all $\mu \in \P(X)$  
and $g \in C(Y)$ (resp., $\nu\in \P(Y)$  and $f \in C(X)$), 
\eq{\lbl{Legendre_trans}
\T_\mu^*(g) = \int_{X}T^- g d\mu,\quad {\rm resp.},\quad  \T_\nu^*(f) = -\int_{Y}T^+ (-f) d\nu,
}
where $\T_\mu^*$ (resp., $\T_\nu^*$) is the Fenchel-Legendre transform of $\T_\mu$ (resp., $\T_\nu$) for the duality of $C(X)$ (resp., $C(Y)$) and ${\mathcal M}(X)$ (resp., and ${\mathcal M}(Y))$. 
\end{enumerate}
}

Note that since $D(\T) \subset \P(X)\times \P(Y)$, the Legendre transforms are simply
\begin{equation}
\T_\mu^*(g) := \sup_{\sigma \in \mathcal{P}(Y)}\{\int_{Y}g d\sigma - \T(\mu, \sigma)\}\quad {\rm resp.,}\quad \T_\nu^*(f) := \sup_{\sigma \in \mathcal{P}(X)}\{\int_{X}f d\sigma - \T(\sigma, \nu)\}.
\end{equation}
\noindent Since $T^-$ and $T^+$ arise from a Legendre transform, they satisfy various additional properties, which are essentially those satisfied by Kantorovich operators. 

From now on, we shall restrict our analysis to backward transfers since if $\T$ is a forward linear transfer with $T^+$ as a forward Kantorovich operator, then $\tilde\T(\mu, \nu)=\T(\nu, \mu)$ is a backward linear transfer 
with $T^-f=-T^+(-f)$ as a backward Kantorovich operator. We shall then do without the sign in the exponent when there is no confusion. However, we shall bring back the sign if we are dealing with operators that are both backward and forward.

In some cases, both will be needed (if they co-exist).

 \begin{theorem} \cite{G} Let $X$ and $Y$ be two compact metric spaces.The following are then equivalent:
\begin{enumerate}

\item $T$ is a backward Kantorovich operator from $C(Y)$ to $USC(X)$. 

\item There exists a backward linear transfer $\T :\mcal{P}(X) \times \mcal{P}(Y) \to \R\cup \{+\infty\}$ such that for all $\mu \in \P(X)$ and $g \in C(Y)$, \eq{\lbl{Legendre_trans5}
\T_\mu^*(g) = \int_{X}T g\,  d\mu. 
}
\item There exists a  proper, bounded below, lower semi-continuous cost functional $c: X\times {\mathcal P}(Y)\to  \R \cup\{+\infty\}$ with $\sigma \mapsto c(x, \sigma)$ convex for each $x\in X$, such that 
\begin{equation}\lbl{costly}
Tg(x) := \sup\{ \int_{Y}gd\sigma - c(x,\sigma)\,;\, \sigma \in \mcal{P}(Y)
\}.  
\end{equation}
\end{enumerate}

\end{theorem}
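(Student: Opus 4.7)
My plan is to prove the three conditions equivalent cyclically, (2) $\Rightarrow$ (3) $\Rightarrow$ (1) $\Rightarrow$ (2), with the last implication carrying the bulk of the content.

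For (2) $\Rightarrow$ (3), I set $c(x, \sigma) := \T(\delta_x, \sigma)$. Convexity in $\sigma$, joint lower semi-continuity, and boundedness below transfer directly from $\T$ via the weak$^*$-continuous embedding $x \mapsto \delta_x$ of $X$ into $\P(X)$. The cost representation $Tg(x) = \sup_\sigma\{\int_Y g\, d\sigma - c(x, \sigma)\}$ is the Legendre identity $\T_\mu^*(g) = \int_X Tg\, d\mu$ specialized to $\mu = \delta_x$. Properness of $c$ follows since $T$ is assumed to map into the space of proper USC functions.

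For (3) $\Rightarrow$ (1), I would verify the four Kantorovich axioms directly from the cost representation. Monotonicity and affinity-on-constants are immediate from $\sigma \in \P(Y)$ being a probability, and convexity in $g$ is automatic since $Tg$ is a supremum of affine functionals of $g$. The nontrivial axiom is that $Tg$ lies in $USC(X)$: the joint function $(x, \sigma) \mapsto \int g\, d\sigma - c(x, \sigma)$ is upper semi-continuous (lsc of $c$ combined with continuity of $\sigma \mapsto \int g\, d\sigma$), and because $\P(Y)$ is weak$^*$ compact, its supremum over $\sigma$ remains usc in $x$ by the standard closed-projection argument. Lower semi-continuity in $g$ then follows by exchanging $\liminf_n$ with the supremum over $\sigma$.

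The heart of the proof is (1) $\Rightarrow$ (2). Given a Kantorovich operator $T$, I define
\begin{equation*}
\T(\mu, \nu) := \sup_{g \in C(Y)}\left\{\int_Y g\, d\nu - \int_X Tg\, d\mu\right\}
\end{equation*}
for $(\mu, \nu) \in \P(X) \times \P(Y)$, and set $\T(\mu, \nu) := +\infty$ otherwise. Since $\P(X) \times \P(Y)$ is weak$^*$ closed and convex in $\mathcal{M}(X) \times \mathcal{M}(Y)$, and for each fixed $g \in C(Y)$ the map $(\mu, \nu) \mapsto \int g\, d\nu - \int Tg\, d\mu$ is weak$^*$ lower semi-continuous on probabilities (because $Tg \in USC(X)$ makes $\mu \mapsto \int Tg\, d\mu$ weak$^*$ upper semi-continuous), the supremum $\T$ is convex and weak$^*$ lsc on $\mathcal{M}(X) \times \mathcal{M}(Y)$. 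Boundedness below follows from the choice $g = 0$. For the Legendre identity $\T_\mu^*(g) = \int_X Tg\, d\mu$, I apply Fenchel-Moreau to the functional $\Phi(g) := \int_X Tg\, d\mu$ on $C(Y)$: by construction $\T_\mu = \Phi^*$, so the identity reduces to $\Phi^{**} = \Phi$, which holds once $\Phi$ is shown to be convex and lower semi-continuous. Convexity is inherited from axiom 3 on $T$, and lsc follows from axioms 1--2 via the one-sided bound $Tg \leq Tg_n + \|g - g_n\|_\infty$ (pointwise), which integrates to $\Phi(g) \leq \liminf_n \Phi(g_n)$.

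The main obstacle is the Fenchel-Moreau step in (1) $\Rightarrow$ (2), and more specifically verifying lower semi-continuity of $\Phi$. Since $Tg$ is only upper semi-continuous and may take the value $-\infty$ on sets of positive $\mu$-measure, $\Phi(g)$ sits in $[-\infty, +\infty)$ and one cannot invoke dominated convergence directly; the uniform-perturbation estimate from monotonicity and affine-on-constants sidesteps this and yields the required one-sided control cleanly.
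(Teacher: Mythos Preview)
Your proposal is correct and follows precisely the route the paper sketches: it defines $\T$ from $T$ via the dual formula $\T(\mu,\nu)=\sup_{g}\{\int g\,d\nu-\int Tg\,d\mu\}$ and recovers the cost as $c(x,\sigma)=\T(\delta_x,\sigma)$, which is exactly what the paper records after the statement (the paper defers the detailed verification to \cite{G}). One small point worth making explicit in your write-up of (1)$\Rightarrow$(2): the Lipschitz bound $Tg\le Th+\|g-h\|_\infty$ forces a dichotomy for each fixed $\mu$ --- either $\Phi(g)=\int Tg\,d\mu$ is finite for all $g$ (so $\Phi$ is a real-valued $1$-Lipschitz convex function and Fenchel--Moreau applies directly), or $\Phi\equiv-\infty$ (so $\T_\mu\equiv+\infty$ and $\T_\mu^*\equiv-\infty=\Phi$ trivially); stating this cleanly disposes of the $-\infty$ issue you flag at the end.
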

The correspondence between these three notions goes as follows. If $T$ is a backward Kantorovich operator, then it defines a standard backward $\T$  linear transfer via the formula:  
\begin{equation}\label{duality}
\T(\mu,\nu) = \begin{cases}
\sup\limits_{g \in C(Y)}\{\int_{Y}gd\nu - \int_{X}Tgd\mu\}\quad&\hbox{for all $(\mu, \nu)\in \P(X)\times \P(Y)$}\\
+\infty & \text{\rm otherwise.}
\end{cases}
\end{equation}
The cost $c$ is then simply  $c(x, \sigma)=\T(\delta_x, \sigma)$. 

$T$ is positively $1$-homogenous backward Kantorovich operator if and only if its corresponding linear transfer $\T$ is given by a closed convex gambling house $\S\in \P(X)\times \P(Y)$, that is 
 \begin{equation}
\T(\mu, \nu)=\left\{ \begin{array}{llll}
0 \quad &\hbox{if $(\mu, \nu)\in \S$}\\
+\infty \quad &\hbox{\rm otherwise}
\end{array} \right.
\end{equation} 
In which case, 
$Tg(x)=\sup\{\int_Y g d\sigma; (\delta_x, \sigma) \in \S\}=\sup\{\int_Y g d\sigma; \sigma \in \P(Y), \sigma \leq T\}.$

\begin{proposition} Consider the class ${\mathcal K}(Y, X)$ of backward Kantorovich operators from $C(Y)$ to $USC(X)$. Then, 
\begin{enumerate}

\item If $T_1$ and $T_2$ are in ${\mathcal K}(Y, X)$, and $\lambda \in [0, 1]$, then $\lambda T_1+(1-\lambda)T_2 \in {\mathcal K}(Y, X)$. 

\item If $\lambda \in \R^+$ and $T\in {\mathcal K}(Y, X)$, then the map $(\lambda \cdot T)f:=\frac{1}{\lambda}T(\lambda f)$ for any $f\in C(Y)$, belongs to ${\mathcal K}(Y, X)$.

\item If $T_1$ and $T_2$ are in ${\mathcal K}(Y, X)$, then the map $T_1\star T_2$ defined for $f\in C(Y)$ and $x\in X$ by 
\[
(T_1\star T_2)f(x):=\sup_{\sigma \in {\mathcal P}(Y)}\inf_{g, h\in C(Y)}\int_Y(f-g-h)\, d \sigma +T_1g(x) +T_2h(x) 
\]
is in ${\mathcal K}(Y, X)$

\item  If $T_1$ and $T_2$ are in ${\mathcal K}(Y, X)$, then the map $(T_1\vee T_2)f(x):=T_1f(x)\vee T_2f(x)$ for any $f\in C(Y)$, and $x\in X$ belongs to ${\mathcal K}(Y, X)$.  

\item If $(T_i)_{i\in I}$ is a family  in ${\mathcal K}(Y, X)$, such that for every $f\in C(Y)$, the function $\sup_{i\in I}T_if$ is bounded above on $X$, then the map $
f\to \overline{\sup_{i\in I}T_if}$ 
is a Kantorovich operator, where here $\overline g (x)=\inf\{h(x); h\in C(X), h\geq g \, {\rm on}\, X\}$ is the smallest upper semi-continuous function above $g$. 
\end{enumerate}
\end{proposition}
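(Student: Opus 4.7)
The plan is to verify, for each of the five constructions, the four defining axioms of a backward Kantorovich operator (monotonicity, affinity on constants, convexity, lower semi-continuity with respect to uniform convergence). A preliminary remark that streamlines everything is that axioms (1) and (2) alone already force any backward Kantorovich operator to be $1$-Lipschitz in uniform norm: from $f\le g+\|f-g\|_\infty$ one gets $Tf\le Tg+\|f-g\|_\infty$ pointwise, and symmetrically, so $|Tf-Tg|\le \|f-g\|_\infty$ on $X$. Therefore, once axioms (1) and (2) are established for the new operator, axiom (4) follows automatically (in fact one gets uniform convergence $Tf_n\to Tf$), and I only need to check (1), (2), (3), plus verify that the result lands in $\mathrm{USC}(X)$.

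Items 1, 2, and 4 reduce to term-by-term verifications. For $\lambda T_1 + (1-\lambda)T_2$ with $\lambda\in[0,1]$, monotonicity, affinity, and convexity pass through convex combinations directly. For the rescaling $(\lambda\cdot T)f := \lambda^{-1}T(\lambda f)$ with $\lambda>0$, affinity reads $(\lambda\cdot T)(f+c) = \lambda^{-1}T(\lambda f+\lambda c) = \lambda^{-1}(T(\lambda f)+\lambda c) = (\lambda\cdot T)f + c$, while convexity follows from convexity of $T$ composed with the linear map $f\mapsto \lambda f$. For $T_1\vee T_2$, the pointwise maximum of two elements of $\mathrm{USC}(X)$ is in $\mathrm{USC}(X)$, and axioms (1)--(3) transfer by taking $\vee$ of the corresponding inequalities for $T_1$ and $T_2$.

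The conceptually interesting case is item 3. Using the cost representation from the preceding theorem, write $T_i g(x) = \sup_\sigma\{\int_Y g\,d\sigma - c_i(x,\sigma)\}$ with $c_i(x,\sigma) = \mathcal T_i(\delta_x,\sigma)$. Then Legendre duality evaluates the inner infimum exactly: for each $\sigma$,
\[
\inf_{g\in C(Y)}\Big\{T_1 g(x)-\int_Y g\,d\sigma\Big\} = -\sup_{g\in C(Y)}\Big\{\int_Y g\,d\sigma - T_1 g(x)\Big\} = -c_1(x,\sigma),
\]
and analogously for $T_2$. Since the two inner infima decouple, the star product collapses to
\[
(T_1\star T_2)f(x) = \sup_{\sigma\in\mathcal P(Y)}\Big\{\int_Y f\,d\sigma - c_1(x,\sigma) - c_2(x,\sigma)\Big\}.
\]
Because the pointwise sum of two proper, lsc, $\sigma$-convex costs has the same three properties, the cost-characterization theorem identifies $T_1\star T_2$ as a backward Kantorovich operator. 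Alternatively one verifies (1)--(3) directly on the sup-inf formula; convexity then follows by substituting convex combinations of test pairs $(g_j,h_j)$ into the inner infimum and invoking the convexity of $T_1$ and $T_2$.

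For item 5, put $\Phi f := \sup_{i\in I}T_i f$ and $Sf := \overline{\Phi f}$. The hypothesis guarantees $\Phi f$ is bounded above, so the usc envelope is well-defined and lies in $\mathrm{USC}(X)$. Monotonicity and affinity on constants transfer to $\Phi$ termwise and then through the envelope, which is order-preserving and commutes with additive constants. For convexity, $\Phi(\alpha f_1+(1-\alpha)f_2) \le \alpha\Phi f_1+(1-\alpha)\Phi f_2 \le \alpha Sf_1+(1-\alpha)Sf_2$ and the right-hand side is already usc, so the smallest usc majorant of the left-hand side, namely $S(\alpha f_1+(1-\alpha)f_2)$, lies below it. The main obstacle I anticipate is axiom (4), because a supremum of usc (or lsc) functions is generally neither, and the $\liminf$ of the usc functions $Sf_n$ resists direct pointwise comparison with $Sf$. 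The uniform Lipschitz bound rescues us: since $|T_if_n-T_if|\le \|f_n-f\|_\infty$ uniformly in $i$, we get $|\Phi f_n-\Phi f|\le \|f_n-f\|_\infty$, and applying the envelope yields $|Sf_n-Sf|\le \|f_n-f\|_\infty$, whence $Sf_n\to Sf$ uniformly, which is strictly stronger than (4).
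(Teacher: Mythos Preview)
The paper does not prove this proposition: Section~2 states it among the basic properties of Kantorovich operators and concludes with ``Full proofs can be found in~[G]'', so there is no in-paper argument to compare against.

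Your verification is correct and well organized for items 1, 2, 4, and 5. The preliminary observation that monotonicity together with affinity on constants forces the $1$-Lipschitz bound $|Tf-Tg|\le\|f-g\|_\infty$, and hence axiom~(4), is exactly the right shortcut; it disposes in particular of the only delicate point in item~5, namely lower semi-continuity of $f\mapsto\overline{\sup_i T_i f}$ under uniform convergence.

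For item~3 your cost computation
\[
(T_1\star T_2)f(x)=\sup_{\sigma\in\mathcal P(Y)}\Big\{\int_Y f\,d\sigma - c_1(x,\sigma)-c_2(x,\sigma)\Big\}
\]
is correct and is the natural route. However, the assertion that the sum of two \emph{proper} costs is proper is false in general. Take $Y$ containing two distinct points $a,b$ and set $T_1g:=g(a)$, $T_2g:=g(b)$; both are Markov, hence backward Kantorovich. Then $c_i(x,\sigma)=0$ if $\sigma=\delta_a$ (resp.\ $\delta_b$) and $+\infty$ otherwise, so $c_1+c_2\equiv+\infty$ and $(T_1\star T_2)f\equiv-\infty$, which is not proper and therefore not in $USC(X)$ as the paper defines it. This is a genuine gap, but it appears to reflect a missing hypothesis in the statement of item~3 itself (overlapping effective domains of the $c_i$, or equivalently properness of $T_1\star T_2$) rather than a defect in your method; the intended formulation should be checked in~[G].
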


Denote by $USC_f(X)$ (resp., $USC_b(X)$) the cone of functions in $USC(X)$ that are finite (resp., bounded below).
 
\begin{proposition}  Say that a backward Kantorovich operator $T$ (or its associated linear transfer $\T$) is {\bf standard} (resp., {\bf regular}) if it satisfies any one of these equivalent conditions.
\begin{enumerate}
\item $T$ maps $C(Y)$ to $USC_f(X)$ (resp., $USC_b(X)$).

\item The function $T(0)$ is finite (resp., bounded below).

\item The corresponding linear transfer $\T$ satisfies $\inf_{\sigma \in \P(Y)}\T(x, \sigma)<+\infty$ for all $x\in X$, 

\eq{
\hbox{(resp.,}\quad  d:=\sup_{x\in X}\inf_{\sigma \in \P(Y)}\T(x, \sigma)<+\infty. )
}
\end{enumerate}
Moreover, if  $T$ is standard then it is a contraction from $C(Y)$ to $USC_f(X)$, that is 
\eq{\lbl{lip1}
\sup_{x\in X}|Tg(x)-Th(x)|\leq  \|g - h\|_\infty.
}
A regular operator is clearly standard and $T+d$ then maps $C_+(Y)$ to $USC_+(X)$ for each $x\in X$. 
\end{proposition}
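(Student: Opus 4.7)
The plan is to establish the Lipschitz contraction first from the Kantorovich axioms alone, then cycle $(1)\Leftrightarrow(2)$ via the sandwich it provides, and finally translate $(2)\Leftrightarrow(3)$ through the cost representation of $T$. For the contraction, I would note that $h-\|g-h\|_\infty \leq g \leq h+\|g-h\|_\infty$ pointwise on $Y$; applying monotonicity and affineness on constants immediately yields
\begin{equation*}
Th - \|g-h\|_\infty \;\leq\; Tg \;\leq\; Th + \|g-h\|_\infty
\end{equation*}
pointwise on $X$, interpreted with the convention that $-\infty$ is dominated by every element of $\R$. At any point where both sides are finite this recovers the claimed bound $|Tg(x)-Th(x)|\leq\|g-h\|_\infty$, which becomes the genuine contraction estimate once standardness is available.

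The implication $(1)\Rightarrow(2)$ is immediate by taking $g\equiv 0$. For $(2)\Rightarrow(1)$ I would apply the sandwich with $h\equiv 0$: in the standard case $T0$ is finite, so $Tg \geq T0 - \|g\|_\infty > -\infty$ pointwise, and combined with the built-in upper boundedness of elements of $USC(X)$ this forces $Tg\in USC_f(X)$. In the regular case $T0\geq -d$ gives $Tg\geq -d-\|g\|_\infty$, so $Tg$ is bounded below and lies in $USC_b(X)$. The fact that regular implies standard is then automatic, since bounded below is stronger than finite.

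For $(2)\Leftrightarrow(3)$ I would substitute $g\equiv 0$ into the cost representation $Tg(x)=\sup_{\sigma\in\P(Y)}\{\int_Y g\,d\sigma-\T(\delta_x,\sigma)\}$ provided by the preceding theorem, yielding
\begin{equation*}
T0(x) \;=\; -\inf_{\sigma \in \P(Y)} \T(\delta_x,\sigma).
\end{equation*}
Since $\T$ is bounded below the right-hand infimum is never $-\infty$, so finiteness of $T0(x)$ is equivalent to $\inf_\sigma \T(\delta_x,\sigma)<+\infty$; taking a uniform-in-$x$ bound gives the regular version with the same constant $d$. The final assertion that $T+d$ maps $C_+(Y)$ to $USC_+(X)$ then follows from $Tg \geq T0 \geq -d$ whenever $g\geq 0$.

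The main obstacle I anticipate is purely bookkeeping: one must handle the extended-real arithmetic cleanly during the sandwich step, ensuring the argument propagates finiteness from $T0$ to every $Tg$ without presupposing what it is trying to prove, and one must also verify that the identity relating $T0$ and $\inf_\sigma \T(\delta_x,\sigma)$ does not require $T0$ to be finite \emph{a priori} — both are monotone translations of the same extended-real quantity, which is the point that makes $(2)\Leftrightarrow(3)$ a genuine equivalence rather than a one-sided implication.
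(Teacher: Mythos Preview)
Your proof is correct. The paper does not actually supply its own proof of this proposition; Section~2 begins by stating that ``Detailed proofs can be found in \cite{G}'' and ends with ``Full proofs can be found in \cite{G}'', so there is nothing to compare against beyond the statement itself. Your argument --- deriving the contraction from monotonicity plus affineness on constants, using the sandwich with $h\equiv 0$ to pass between $(1)$ and $(2)$, and reading off $(2)\Leftrightarrow(3)$ from $T0(x)=-\inf_\sigma \T(\delta_x,\sigma)$ --- is the natural one and is complete as written.
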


  Denote by $F(X)$ (resp., $F_+(X)$) the class of real-valued (resp., non-negative) functions on $X$. We also consider the class $USC_\sigma(X)$, which is the closure of $USC(X)$ with respect to monotone increasing limits.
 
 \defn{{\bf A Choquet functional capacity} is a map $T: F_+(Y)\to F_+(X)$ such that
\begin{enumerate}
\item $T$ is monotone, i.e., $f\leq g \Rightarrow Tf\leq Tg$. 

\item $T$ maps $USC(Y)$ to $USC(X)$ in such a way that if $g_n, g \in USC(Y)$ and $g_n \downarrow g$ on $Y$, then $T g_n \downarrow Tg$ on $X$. 

\item If $g_n, g \in F_+(Y)$  with $g_n \uparrow g$ on $Y$, then $T g_n \uparrow T g$ on $X$.
\end{enumerate}
}

 \begin{theorem}\lbl{capacity}Let $\T$ be a backward linear transfer  and let $T:C(Y) \to USC(X)$ be the associated backward Kantorovich operator. 
 Then
 \begin{enumerate}
 \item $T$ can be extended to a map from $F(Y)$ to $F(X)$ via the formula  
 \begin{equation}\label{express.1}
T g (x)=\sup\{\int^*_Y g d\nu -{\mathcal T}(\delta_x, \nu); \, \nu\in {\mathcal P}(Y), (\delta_x, \nu)\in D(\T) \},
\end{equation}
where $\int^*_Y g d\nu$ is the outer integral of $g$ with respect to $\nu$. The extension -also denoted by $T$ maps bounded above  functions on $Y$ to  bounded above functions on $X$. It is monotone and satisfies $Tg+c=T(g+c)$ for every $g\in F(Y)$ and $c\in \R$. 

   \item For any upper semi-continuous functions $g$ on $Y$,  we have
 \eq{\lbl{rep1}
 T g (x) := \inf\{ T h(x)\,;\, h \in C(Y), \, h \geq g \}, 
 }
 and if $g_n, g \in USC(Y)$ are such that $g_n \downarrow g$ on $Y$, then $T g_n \downarrow Tg$ on $X$. 

   \item If $T$ is standard (resp., regular), then it maps $USC_b (Y)$ 
   to $USC_f (X)$ (resp., $USC_b(Y)$). 
  \item If $T$ is regular, then the map $T+k$ is a functional capacity that maps $F_+(Y)$ to $F_+(X)$, and consequently, if $g$ is a $K$-analytic function in $F_+(Y)$, then
 \eq{\lbl{rep2}
Tg(x) := \sup\{ T h(x)\,;\, h \in USC(Y)\,,  h \leq g\}.
}
 \item For any $(\mu, \nu)\in {\mathcal P}(X)\times {\mathcal P}(Y)$, we have
\begin{align*}
 {\mathcal T}(\mu, \nu) =\sup\big\{\int_{Y}g\, d\nu-\int_{X}{T}g\, d\mu;\,  g \in USC_b(Y)\big\}.
\end{align*}
\item The Legendre transform formula \refn{Legendre_trans} for $\T_\mu$ extends from $C(Y)$ to $USC(Y)$, that is, for $\mu \in \mcal{P}(X)$,  we have for any $g\in USC(Y)$,
\begin{equation}\label{extLT}
{\mathcal T}^*_\mu (g):=\sup\{\int_Yg d\sigma -\T(\mu, \sigma); \sigma \in {\mathcal P}(Y)\}=
 \int_XT ^-g \, d\mu.
\end{equation}
\end{enumerate} 
\end{theorem}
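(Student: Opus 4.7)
The overall plan is to propagate structural facts through formula \refn{express.1}, which we treat as the definition of $T$ on all of $F(Y)$, and to use the Legendre identity $\T_\mu^*(g) = \int_X Tg\,d\mu$ (valid on $C(Y)$ by assumption) as the bridge between $T$ and $\T$. The six parts are then attacked in order: (1)--(3) establish the extension and its regularity, (4) upgrades the extension to a Choquet capacity, and (5)--(6) dualize back to $\T$.

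For (1), monotonicity and the constant-shift identity fall out of the defining formula, since outer integration is monotone and, because $\nu \in \P(Y)$, shifts by $c$; boundedness above follows from $\int^* g\,d\nu \leq \sup g$. For (2), I use that on compact metric $Y$ every $g \in USC(Y)$ is the pointwise infimum of a decreasing sequence $h_n \in C(Y)$, so $\int h_n\,d\nu \downarrow \int g\,d\nu$ by monotone convergence. Combined with weak$^*$ compactness of $\P(Y)$ and weak$^*$ lower semicontinuity of $\T(\delta_x,\cdot)$, I extract a weak$^*$ limit of near-optimal $\nu_n$ for $Tg_n(x)$, apply the $\limsup$ property of $\nu \mapsto \int g_m\,d\nu$ (which is upper semicontinuous for each USC $g_m$), and conclude $Tg_n(x) \downarrow Tg(x)$ whenever $g_n \downarrow g$ in $USC(Y)$; specializing $g_n = h_n$ yields the infimum representation \refn{rep1}. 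Part (3) is then routine: inserting $g \geq m$ into the defining formula gives $Tg(x) \geq m - \inf_\nu \T(\delta_x,\nu)$, which is finite pointwise if $T$ is standard and uniformly bounded below by $m-d$ if $T$ is regular.

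For (4), once $T$ is regular a suitable shift $T+k$ sends $F_+(Y)$ into $F_+(X)$, and the three capacity axioms hold: monotonicity (part 1), continuity from above on $USC$ (part 2), and continuity from below along increasing sequences, which follows from the supremum form of the defining formula together with Beppo Levi for outer integrals. The $K$-analytic envelope representation \refn{rep2} is then Choquet's capacitability theorem applied to the functional capacity $T+k$. For (5), $C(Y)\subset USC_b(Y)$ gives $\T(\mu,\nu)\leq \sup_{USC_b}\{\int g\,d\nu - \int Tg\,d\mu\}$ trivially; for the reverse, approximate $g \in USC_b(Y)$ from above by $h_n \in C(Y)$, use part (2) to get $\int Th_n\,d\mu \downarrow \int Tg\,d\mu$, and note that each $\int h_n\,d\nu - \int Th_n\,d\mu$ is bounded above by $\T(\mu,\nu)$ by definition, so the limit is too. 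Finally (6) splits in two: the inequality $\T_\mu^*(g) \leq \int Tg\,d\mu$ again follows from continuous approximation from above and part (2), while the reverse direction is immediate from (5), since $\T(\mu,\sigma) \geq \int g\,d\sigma - \int Tg\,d\mu$ for every $\sigma \in \P(Y)$ rearranges to $\int Tg\,d\mu \geq \T_\mu^*(g)$.

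The main technical obstacle will be the monotone-convergence-from-above argument in (2); it draws simultaneously on weak$^*$ compactness of $\P(Y)$, weak$^*$ lower semicontinuity of $\T(\delta_x,\cdot)$, and upper semicontinuity of $\nu \mapsto \int g\,d\nu$ on USC $g$, and it is the one place where convex-analytic structure rather than bare monotonicity is essential. Once that step is executed carefully, every subsequent part reduces to a continuous approximation argument combined with already-established capacity machinery, culminating in Choquet's theorem for (4).
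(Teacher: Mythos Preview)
The paper does not actually prove this theorem; it states the result and writes ``Full proofs can be found in \cite{G}.'' So there is no in-paper proof to compare against, and I am assessing your outline on its own merits.

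Parts (1)--(5) are sound. The compactness/lower-semicontinuity argument you sketch for (2) is the right mechanism, and once it is in place (3)--(5) follow as you describe.

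There is, however, a genuine slip in part (6). You claim to prove both inequalities, but both of your arguments yield the \emph{same} one, namely $\T_\mu^*(g) \leq \int_X Tg\,d\mu$. Your ``reverse direction'' reads: from (5), $\T(\mu,\sigma) \geq \int g\,d\sigma - \int Tg\,d\mu$ for every $\sigma$, which rearranges to $\int Tg\,d\mu \geq \int g\,d\sigma - \T(\mu,\sigma)$ and hence, taking the supremum over $\sigma$, to $\int Tg\,d\mu \geq \T_\mu^*(g)$. That is the inequality you already had, not its converse. (Note also that (5) is stated only for $g \in USC_b(Y)$, whereas (6) concerns all of $USC(Y)$.)

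The missing direction $\int_X Tg\,d\mu \leq \T_\mu^*(g)$ is not hard, but it requires a separate argument. The cleanest fix is to observe that your compactness argument in (2) is not specific to Dirac masses: for \emph{any} $\mu \in \mcal{P}(X)$, if $h_n \downarrow g$ in $USC(Y)$ then the same extraction of near-optimal $\sigma_n$'s and passage to a weak$^*$ limit gives $\T_\mu^*(h_n) \downarrow \T_\mu^*(g)$. Combined with $\int Th_n\,d\mu \downarrow \int Tg\,d\mu$ (part (2) plus monotone convergence) and the identity $\T_\mu^*(h_n) = \int Th_n\,d\mu$ for continuous $h_n$, this yields (6) as a genuine equality, both directions at once.
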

Now that regular Kantorovich operators can be  extended to maps from $USC_b(Y)$ into $USC_b(X)$, we can compose them in the following way.

 \prop{\label{conv} Let $X_1,...., X_n$ be $n$ compact spaces, and suppose for each $i=1,..., n$,  ${\mathcal T}_i$ is a regular  backward linear transfer on ${\mathcal P}( X_{i-1})\times {\mathcal P}(X_i)$ with  Kantorovich operator  $T _i:C(X_{i})\to USC_f(X_{i-1})$.
 For any probability measures $\mu$ on $X_1$ (resp., $\nu$ on $X_n$), define
\begin{equation}
\T_1\star\T_2...\star\T_n(\mu, \nu)=\inf\{{\mathcal T}_{1}(\mu, \sigma_1) + {\mathcal T}_{2}(\sigma_1, \sigma_2) ...+{\mathcal T}_{n}(\sigma_{n-1}, \nu);\, \sigma_i \in {\mathcal P}(X_i), i=1,..., n-1\}.
\end{equation}
Then,  ${\mathcal T}:=\T_1\star\T_2...\star\T_n$ is a linear backward transfer with a Kantorovich operator given by 
 \eq{    
T=T _1\circ T_2\circ...\circ T_{n}.
 }
In other words, the following duality formula holds:
\begin{equation}
{\mathcal T}(\mu, \nu)=\sup\big\{\int_{X_n}g(y)\, d\nu(y)-\int_{X_1}T_1\circ T_2\circ...\circ T_{n}g(x);\,  g\in C(X_n) \big\}.
\end{equation}
We shall denote by $\T_n$ the linear transfer $\T\star\T\star...\star\T$ by iterating $\T$ $n$-times. The corresponding Kantorovich operator is then $T^n=T\circ T\circ ...\circ T$. 
}
Full proofs can be found in \cite{G}. 

\section{The Mather constant and weak KAM solutions for a Kantorovich operator}

\defn{{\bf The Mather constant} of a backward Kantorovich operator $T: C(X)\to USC(X)$ is the -possibly infinite- scalar,
 \eq{\lbl{stc}
c(T) :=  \inf_{\mu \in \mcal{P}(X)}\sup_{h\in C(X)}\{\int_X(h-Th)\, d\mu.
}
}
If $\T:\mcal{P}(X)\times\mcal{P}(X)\to \R\cup\{+\infty\}$ is the backward linear transfer associated to $T$, then one can readily see that
\eq{\lbl{stc1}
c(T)=c(\T) := \inf_{\mu \in \mcal{P}(X)}\T(\mu,\mu).
}
Also note that if $c(T)$ is finite, then since $\T$ is weak$^*$-lower semi-continuous, there exists $\bar \mu \in \P(X)$ such that 
\[
\T({\bar \mu}, {\bar \mu})=\sup_{h\in C(X)}\{\int_X(h-Th)\, d{\bar \mu}=c(\T).
\]
Such measures will be called {\bf minimal measures}. 
\thm{\lbl{constant_equality}
Let $\T: \mcal{P}(X)\times \mcal{P}(X) \to \R\cup\{+\infty\}$ be a backward linear transfer, $T$ its associated backward Kantorovich operator, and $c(T)$ its Mather constant.  
\begin{enumerate}
\item If $c(\T)$ is  finite, then
 \eq{\lbl{min}
c(\T) = \lim_{n \to \infty}\frac{\inf\limits_{(\mu,\nu)\in \mcal{P}(X)\times \mcal{P}(X)}\T_n(\mu,\nu)}{n}.
}
\item If ${\bar \mu}$ is a minimal measure, then 
for each $g \in C(X)$, 
\eq{\lbl{mean}
\lim_{n \to \infty}\frac{1}{n}\int_XT^ng \, d\bar{\mu} = -c(\T).
}
\item  If $\T$ is a regular backward linear transfer, then 
\eq{\lbl{point}
c(\T) = \sup_{g \in C(X)}\inf_{x \in X}\{g(x) - T g(x)\}.
}

\end{enumerate}
}
\prf{1) 
First note that 
$\inf_{(\mu,\nu)}\T_n(\mu,\nu) \leq \T_n(\mu,\mu) \leq n\T(\mu,\mu)$,
hence 
\eqs{
\limsup_{n \to \infty}\frac{\inf_{(\mu,\nu)}\T_n(\mu,\nu)}{n} \leq  \inf_{\mu}\T(\mu,\mu) =c(\T).
}
 On the other hand, let $\mu_1^{n}, \mu^{n}_{n+1}$ be such that
\eq{\lbl{joint_convexity_inequality1}
\inf_{(\mu,\nu)}\T_n(\mu,\nu) = \T_n(\mu_1^{n}, \mu^{n}_{n+1}).
}
By definition of $\T_n$, 
we may write
$\T_n(\mu_1^{n}, \mu^{n}_{n+1}) = \sum_{j = 1}^{n}\T(\mu_j^{n}, \mu_{j+1}^{n})
$ 
  for some $\mu_j^{n} \in \mcal{P}(X)$, $j = 1,\ldots, n+1$ (the infimum is achieved by weak$^*$ lower semi-continuity). Hence by joint convexity, 
\eq{\lbl{joint_convexity_inequality3}
\frac{1}{n}\T_n(\mu_1^{n}, \mu^{n}_{n+1}) = \frac{1}{n}\sum_{j = 1}^{n}\T(\mu_j^{n}, \mu_{j+1}^{n})\geq \T(\frac{1}{n}\sum_{j = 1}^{n}\mu_j^{n}, \frac{1}{n}\sum_{j = 1}^{n}\mu_{j+1}^{n}).
}
Define $\nu_n := \frac{1}{n}\sum_{j = 1}^{n}\mu_j^{n}$. Then 
\eq{\lbl{joint_convexity_inequality4}
\T(\frac{1}{n}\sum_{j = 1}^{n}\mu_j^{n}, \frac{1}{n}\sum_{j = 1}^{n}\mu_{j+1}^{n}) = \T(\nu_n, \nu_{n} + \frac{1}{n}(\mu_{n+1}^n-\mu_1^n)).
}
Now let $n_k$ be a subsequence such that 
\eqs{
\liminf_{n \to \infty}\frac{\inf_{(\mu,\nu)}\T_n(\mu,\nu)}{n} = \lim_{k \to \infty}\frac{\inf_{(\mu,\nu)}\T_{n_k}(\mu,\nu)}{n_k}.
}
Up to extracting a further subsequence, we may assume that $\nu_{n_k} \to \bar{\nu}$ for some $\bar{\nu} \in \mcal{P}(X)$. It then follows from \refn{joint_convexity_inequality1}, \refn{joint_convexity_inequality3}, and \refn{joint_convexity_inequality4}, together with weak$^*$ lower semi-continuity of $\T$, that
\eqs{
\liminf_{n \to \infty}\frac{1}{n}\inf_{(\mu,\nu)}\T_n(\mu,\nu) \geq \T(\bar{\nu}, \bar{\nu}) \geq c(\T),
}
which concludes the proof of (1).

2) We actually show that for each $g\in C(X)$ and $\mu \in {\cal P}(X)$, we have 
\eq{
-\T(\mu,\mu) \leq \liminf_{n\to\infty}\frac{1}{n}\int_XT^ng \, d\mu
\leq \limsup_{n\to\infty}\frac{1}{n}\int_{X}T^ngd\mu \leq -c(\T),
}
which will clearly imply our claim if $\mu$ is minimal. Indeed, for the upper bound, write
\begin{align*}
\int_{X} T^ng d\mu =\sup\{\int_X g\ d\nu -\T_n(\mu, \nu)\,;\, \nu \in \mcal{P}(X)\}
\leq \sup_X g -\inf_{(\mu,\nu)}\T_n(\mu, \nu).
\end{align*}
Dividing by $n$ and using that $c(\T) = \lim_{n \to \infty}\frac{\inf_{(\mu,\nu)}\T_n(\mu,\nu)}{n}$, we deduce the stated upper bound for $\limsup_{n\to\infty}\frac{1}{n}\int_{X}T^ngd\mu$. For the lower bound, it suffices to write
\begin{align*}
\int_XT^ng d\mu =\sup\{\int_X g \ d\sigma -\T_n(\mu, \sigma)\,;\, \sigma \in \mcal{P}(X)\}
\geq \int_X g \, d\mu -\T_n(\mu, \mu)
\geq \int_X g \, d\mu -n\T (\mu, \mu).
\end{align*}

(3) Suppose now that that $\T$ is a regular transfer, that is $\sup\limits_{x \in X}\inf\limits_{\nu \in \mcal{P}(X)}\T(\delta_x, \nu) < +\infty$, then  $T g \in USC_b(X)$ for all $g \in C(X)$. Since $g - Tg$ is then a bounded below  lower semi-continuous function, it achieves a minimum on the compact space $X$, so that 
\as{
c(\T) &= \inf_{\mu \in \mcal{P}(X)}\sup_{g \in C(X)}\int_{X}(g-Tg)d\mu\\
&= \sup_{g \in C(X)}\inf_{\mu \in \mcal{P}(X)}\int_{X}(g-Tg)d\mu\\
&= \sup_{g \in C(X)} \min_{x \in X}\{g(x) - Tg(x)\}.  
}
Here we have applied Sion's min-max principle to the convex-concave function 
$
f:\mcal{P}(X)\times C(X) \to \R$ defined by$ f(\mu,g) := \int_{X}(g - Tg)d\mu$, which is finite since $Tg \in USC_b(X)$. Indeed, we have that $g \mapsto f(\mu,g)$ is upper semi-continuous on $C(X)$ since $T$ is lower semi-continuous on $C(X)$, and $\mu \mapsto f(\mu,g)$ is lower semi-continuous on $\mcal{P}(X)$ since $Tg \in USC_b(X)$. Moreover, $\mu \mapsto f(\mu,g)$ is quasi-convex, i.e. $\{\mu \in \mcal{P}(X)\,;\, f(\mu, g) \leq \lambda\}$ is convex or empty for $\lambda \in \R$, and $g \mapsto f(\mu,g)$ is quasi-concave, i.e. $\{g \in C(X)\,;\, f(\mu,g) \geq \lambda\}$ is convex or empty for $\lambda \in \R$.
}

\lem{Let $g$ be a bounded above function in $USC_\sigma(X)$, then for any minimal measure $\mu$, we have 
\eq{\lbl{mono}
\int_{X}gd\mu \leq \int_{X}T gd\mu + c(\T).
}
}
\prf{Take $\mu \in \mcal{P}(X)$ for which $c(\T) = \T(\mu,\mu)$. Since $\T(\mu,\mu) = \sup_{h \in C(X)}\{\int_{X}hd\mu - \int_{X}Th d\mu\},$ we have for any $h \in C(X)$, 
$
\int_{X}(h- Th)d\mu \leq c(\T).
$
 For any $g \in USC(X)$, take a decreasing sequence $(h_j) \subset C(X)$ with $h_j \searrow g$. Since $T$ is a capacity, we have $Th_j \searrow T g$, so by monotone convergence
\eq{\lbl{mono}
\int_{X}gd\mu \leq \int_{X}T gd\mu + c(\T).
}
And again since $T$ is a capacity, we have (\ref{mono}) for any $g\in USC_\sigma(X)$ that is bounded above. 
}
\defn{ 
(i) We say that 
a function $g$ is a \textbf{backward subsolution (resp., solution) for $T$ at level $k \in \R$} if
\enum{
\item $g$ is proper (i.e., $g \not\equiv -\infty$) and bounded above.  
\item $Tg(x) + k \leq g(x)$ (resp., $Tg(x) + k = g(x)$) for all $x \in X$.
} 

\noindent (ii)  We say that $g$ is \textbf{very proper} if $g\in USC_\sigma(X)$ and $\int_{X}gd\mu > -\infty$ for some minimal measure $\mu \in \mcal{P}(X)$. 
 }
\defn{
The \textbf{Ma\~n\'e constant} $c_0(T)$ is the supremum over all $k \in \R$ such that there exists a very proper subsolution $g$ for $T$ at level $k$. 
}
\prop{\lbl{Mane_constant_equality}
Let $T$ be a backward Kantorovich operator such that $c(T)<+\infty$. Then, 
\enum{
\item If $g$ is a backward subsolution at level $c(T)$ in $USC_\sigma(X)$, then for any minimal measure $\mu$,
\eq{\lbl{support}
-\infty \leq \int_{X}gd\mu = \int_{X}T gd\mu + c(\T). 
}
\item If $g$ is a very proper backward subsolution at level $k$, then $k\leq c(\T)$, hence $c_0(T) \leq c(T)$.

\item If $g$ is a very proper backward solution at level $k$ then necessarily, $k =c_0(T)= c(\T)$.

\item In general, we have 
\eq{\lbl{mane-transfer}
\sup_{g \in C(X)}\inf_{x \in X}\{g(x) - Tg(x)\} \leq c_0(T) \leq c(T), 
}
with equality if $T$ is regular. 
}
}
\prf{1) If $g$ is in $USC_\sigma(X)$ and is bounded above, then inequality (\ref{mono}) holds for any minimal measure $\mu$. If $g$ is also a subsolution $g$ at level $k$, then we have
\eqs{
\int_{X}gd\mu \leq \int_{X}T gd\mu + c(\T) \leq \int_{X}gd\mu + c(\T) - k, 
}
which proves (\ref{support}) when $k=c(T)$.

2) If now $g$ is very proper, then $\int_{X}gd\mu > -\infty$ for some minimal measure $\mu$, hence we may subtract it and deduce that $k \leq c(\T)$, therefore $c_0(T) \leq c(T)$. 

3) If $g$ is a very proper backward solution at level $k$, then it is a subsolution, so $k \leq c_0(T) \leq c(\T)$. On the other hand, $g$ also satisfies $T^n g + nk = g$, where $T^n$ is the $n$-fold composition of $T$ with itself. This means that
\eqs{
\int_{X}gd\mu - nk = \int_{X}T^n gd\mu = \sup_{\nu \in \mcal{P}(X)}\{\int_{X}gd\nu - \T_n(\mu,\nu)\} \leq \sup_Xg - \inf_{(\mu,\nu)}\T_n(\mu,\nu),
}
where for the second equality in the above, we have used the fact that $T^n$ is the Kantorovich operator associated to $\T_n$ 
and the extension of $T^n$ to $USC_\sigma^b(X)$. 
Dividing by $n$ and letting $n \to \infty$, we have
\eq{\lbl{weak_KAM_constant}
-k \leq -\lim_{n \to \infty}\frac{\inf_{(\mu,\nu)}\T_n(\mu,\nu)}{n} = -c(\T),
}
where the latter equality holds by Proposition \ref{constant_equality}. The above inequality \refn{weak_KAM_constant} then implies $c(\T) \leq k$, which concludes the proof of 3).

4) It is clear that if $k=\sup\limits_{g \in C(X)}\inf\limits_{x \in X}\{g(x) - Tg(x)\}>c_0(T)$, then there is $g\in C(X)$ so that $\inf_{x \in X}\{g(x) - Tg(x)\}=k' >c_0(T)$, and $g$ is  therefore a very proper subsolution at a level above $c_0$, which is a contradiction.

Assuming now that $\T$ is regular, then by Theorem \ref{constant_equality}, we have $\sup_{g \in C(X)}\inf_{x \in X}\{g(x) - Tg(x)\} = c_0(T) = c(T)$.
}

 A backward subsolution (resp., solution) for $T$ at level $c(T)$ will be called a {\bf backward weak KAM solution (resp., subsolution) for $T$}.

  \lem{\lbl{oscillation_limits} Let $\T:\mcal{P}(X)\times\mcal{P}(X) \to \R\cup\{+\infty\}$ be a standard backward linear transfer with $T$ as a Kantorovich operator. Assume that $c(\T)  < +\infty$, then  
for $g \in USC(X)$ that is bounded below, we have for all $n \in \N$,  
\begin{equation}\lbl{bounded_below}
 \sup_{x \in X} \{T^ng(x) +nc(\T)\}  \geq \inf_{y\in X}g(y).
\end{equation}
}
\noindent {\bf Proof:} Using the fact that $\inf_{ \mu,\sigma} \T_n(\mu, \sigma) \leq nc(\T)$, we have
\as{
\sup_{x \in X} (T^ng(x) +nc(\T))&=  \sup_{\mu \in {\cal P}(X)} \int_{X} (T^ng(x) +nc(\T))\, d\mu  \\
 &=  \sup_{\mu\in \mcal{P}(X)}\sup_{\sigma \in \mcal{P}(X)} \{\int_{X} gd\sigma -\T_n(\mu, \sigma) +nc(\T)\}\\ 
 &\geq  \sup_{\mu\in \mcal{P}(X)}\sup_{\sigma \in \mcal{P}(X)}\{ \inf_X g -\T_n(\mu, \sigma) +nc(\T)\}\\ 
      & = \inf_{X} g  - \inf_{\P(X)\times \P(X)} \T_n  +nc(\T)\\ 
                          &\geq \inf_X g. 
}
   
   \lem{\lbl{weak_KAM_for_monotone_decreasing} Suppose $(S_n)_n$ is a decreasing sequence of backward Kantorovich operators from $USC(Y)$ to $USC(X)$ and let $g\in USC(Y)$ be such that
   \eq{\lbl{bd}
   \inf\limits_n\sup\limits_{x\in X}S_ng(x) >-\infty.
   }
   Then, $h(x) := \lim\limits_{n \to \infty}\downarrow S_ng(x)$ belongs to $USC(X)$ (in particular, it is proper).
  } 
\noindent {\bf proof:} $(S_ng)_n$ is a decreasing sequence in $USC(X)$, hence it converges to its infimum $h$, which is therefore upper semi-continuous. To see that $h$ is proper, suppose not, which means that for every $x$, $S_ng(x)$ decreases to $-\infty$. There is then for each $x_0$, an $n_0 \in \N$ such that $S_{n_0}g(x_0) < K - 1$, where  $K:=\inf\limits_n\sup\limits_{x\in X}S_ng(x)$. Since $S_{n_0}g$ is upper semi-continuous, the inequality $S_{n_0}g(x) < K - 1$ must hold in a neighbourhood of $x_0$. Since $X$ is compact and $S_ng$ is decreasing, it follows that there is $M \in \N$ such that $S_Mg(x) \leq K - 1$ for all $x \in X$, which contradicts (\ref{bd}). 

 \lem{\lbl{decrease} Suppose $T$ is a backward Kantorovich operator from $USC(X)$ to $USC(X)$ such that $c(\T)  < +\infty$. If $g \in USC(X)$ is bounded below and $\{T^ng + nc(\T)\}_{n \in \N}$ is a decreasing sequence, then $h(x) := \lim\limits_{n \to \infty}\downarrow T^n g(x) + nc(\T)$ belongs to $USC(X)$ and $T h + c(\T) = h$.
 }
 \noindent {\bf proof:} It  follows from the previous lemma  applied to $S_ng:=T^ng + nc(\T)$. Note that since $g$ is bounded below, then by Lemma \ref{oscillation_limits}, we have $\sup\limits_{x\in X}S_ng(x) \geq \inf_Xg>-\infty$. We conclude that $h$ is proper and therefore belongs to $USC(X)$.  Since $T$ is a capacity, we conclude that 
 \eqs{
 Th + c(\T) = \lim_{n \to \infty}T(T_n g + nc(\T)) + c(\T) = \lim_{n \to \infty}T_{n+1} g + (n+1)c(\T) = h.
  }
  Here are situations where we can easily get weak KAM solutions in $USC(X)$. Stronger results will be obtained in the next section. 

\begin{corollary}Let $\T$ be a regular backward linear transfer such that $c(\T)=\inf\limits_{\P(X)\times \P(X)}\T$, then there exists a weak KAM solution in $USC(X)$ for the corresponding Kantorovich operator.
\end{corollary}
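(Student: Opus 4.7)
My plan is to apply Lemma \ref{decrease} starting from the zero function. That lemma takes a bounded-below $g \in USC(X)$ for which the sequence $g_n := T^n g + n c(\T)$ is pointwise decreasing, and returns a limit $h \in USC(X)$ satisfying $T h + c(\T) = h$, which is precisely a weak KAM solution. With $g \equiv 0$ the bounded-below hypothesis is free, so the whole task reduces to showing that $0$ is a backward subsolution at level $c(\T)$, i.e.\ $T 0 + c(\T) \leq 0$ pointwise on $X$. Once this is in hand, affinity on constants gives $g_{n+1} = T g_n + c(\T)$, and monotonicity of $T$ propagates the base case $g_1 \leq g_0 = 0$ to $g_{n+1} \leq g_n$ by induction, so the sequence of iterates is genuinely decreasing.

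To verify $T 0 \leq -c(\T)$, I use the cost representation $Tg(x) = \sup\{\int_X g\,d\sigma - \T(\delta_x, \sigma) : \sigma \in \P(X)\}$ recalled in Section 2, with cost $c(x,\sigma) = \T(\delta_x,\sigma)$. Evaluating at $g \equiv 0$ yields
\[
T 0(x) \;=\; -\inf_{\sigma \in \P(X)} \T(\delta_x,\sigma).
\]
Restricting the first variable of $\T$ to the Dirac mass $\delta_x$ can only enlarge the infimum, so
\[
\inf_{\sigma \in \P(X)} \T(\delta_x,\sigma) \;\geq\; \inf_{(\mu,\sigma) \in \P(X) \times \P(X)} \T(\mu,\sigma) \;=\; c(\T),
\]
by the standing hypothesis, giving $T 0(x) \leq -c(\T)$ for every $x$, as required. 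This is the only place the hypothesis $c(\T) = \inf_{\P \times \P} \T$ is used, and it is precisely what forces $0$ to be an \emph{exact} subsolution rather than merely an approximate one.

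The remainder is routine. Regularity of $\T$ ensures via Theorem \ref{capacity}, item (3), that $T$ sends $USC_b(X)$ into itself, so every iterate $g_n$ lies in $USC_b(X)$ and the hypotheses of Lemma \ref{decrease} are fully in force. The resulting decreasing limit $h(x) = \lim_n g_n(x)$ lies in $USC(X)$ and satisfies $T h + c(\T) = h$, yielding the desired weak KAM solution. The single substantive step in the whole argument is the pointwise bound $T 0 \leq -c(\T)$; the rest is assembly of structural results established earlier in the excerpt.
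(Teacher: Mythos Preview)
Your proof is correct and follows essentially the same approach as the paper: the paper starts from the constant function $1$ rather than $0$, shows $T1 + c(\T) \leq 1$ via the same computation (since $\T - c(\T) \geq 0$), and then invokes the decreasing-iterates lemma to obtain the weak KAM solution. The choice of $0$ versus $1$ is immaterial by affinity on constants, so the two arguments are interchangeable.
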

\prf{Note that in this case $\T-c$ is non-negative, hence 
\[
T1(x)+c= \sup_{\sigma \in \P(X)}\{\int_X1\, d\sigma -\T(x, \sigma)+c\}\leq 1.
\] 
It follows that 
 $(T^n1+nc)_n$ is a decreasing sequence  in $USC(X)$. By Lemma \ref{weak_KAM_for_monotone_decreasing}, the function  $h(x)=:\inf_n(T^n1(x)+nc)$ is a weak KAM solution in $USC(X)$.
}
 \prop{\label{hor1} Suppose $\T$ is a regular backward linear transfer on $\mcal{P}(X) \times \mcal{P}(X)$ such that  
there exist $u, v \in USC_b(X)$ satisfying 
\begin{equation}\label{hor}
T^nu +nv=u \quad \hbox{ for all $n\in \N$}.
\end{equation}
  Then, there exists a weak KAM solution $h\in USC(X)$.
}
\prf{ 
By (\ref{hor}) and since $u$ is bounded above and below, Assertion (\ref{mean}) of Theorem \ref{constant_equality} yields, 
$-v(x) = \limsup_{n}\frac{T^nu-u}{n}\leq -c(\T)$. Therefore,
$
T^nu +nc(\T) \leq u$ for all $n\in \N$. 
Applying $T_m$ and using the linearity of $T_m$ with respect to constants, we find $T_{m+n}u + nc(\T) \leq T_m^-u$, and hence
\eqs{
T_{m+n}u + (m+n)c(\T) \leq T_mu + mc(\T).
}
So $n \mapsto T^nu + nc(\T)$ is decreasing, and $u \in USC(X)$ and is bounded below. Consequently by Lemma \ref{weak_KAM_for_monotone_decreasing}, there exists $h \in USC(X)$ such that $T h + c(\T) = h$.
}

\thm{\label{gen} Let $\T$ be a regular  Kantorovich operator such that $c(\T)<+\infty$. Then, either there exists a weak KAM subsolution in $USC_\sigma(X)$, or for every $g\in C(X)$, there exists a sequence $(x_n)_n$ in $X$ such that $\liminf_{n\to \infty}T^ng(x_n)+nc(\T) =+\infty$.
}
\prf{Assume first there is $g\in C(X)$  
so that 
\begin{equation}\lbl{case1}
\hbox{$\forall x\in X$, there exists $n\geq 1$ with $T^ng(x)+nc(\T)< g(x)$.}
\end{equation}
  Since $T^ng$ is in $USC(X)$ and $g\in C(X)$, 
  it follows that $T_{n} g +nc(\T)< g$ on an open neighborhood $B_x \subset X$ of $x$. 
  Since $X$ is compact, there exists a finite number $\{x_1,\ldots, x_k\} \subset X$ such that $\{B_{x_j}\}_{1\leq j \leq k}$ cover $X$. Set $N := \max\{n_{x_1},\ldots, n_{x_k}\}$ and define $\vphi_N(x) :=\inf_{1\leq n \leq N}(T^ng(x) +nc(\T))$. We have $\vphi_N \in USC (X)$, which is bounded below since $\T$ is regular. Moreover, by construction we have for any $x \in X$, $\vphi_N(x) \leq g(x)$. By the monotonicity property of $T$, we have 
 \[
 T\vphi_N +c(\T) \leq Tg +c(\T) \quad \text{and}\quad T\vphi_N +c(\T)\leq \inf_{2\leq n \leq N+1}\{T^ng +nc(\T)\}.
 \]
Therefore, combining the two we deduce that 
\[
 T\vphi_N+c(\T)\leq \inf_{1\leq n \leq N}\{T^ng +nc(\T)\} =\vphi_N.
 \]
It follows that the sequence $\{T^n\vphi_N+nc(\T)\}_n$ is decreasing to a function $\tilde g \in USC(X)$ such that $T\tilde g + c(\T) = \tilde g$. Since $\vphi_N \in USC(X)$ is bounded below, Lemma \ref{decrease} yields that $\tilde g$ is proper. 

Suppose now that (\ref{case1}) does not hold. In this case, if for some $g\in C(X)$, the function $\underline T_\infty g:=\liminf_{n\to \infty}(T^ng+nc(\T))$ is bounded above, then it belongs to $USC_\sigma (X)$ and is proper since  $\underline T_\infty g (x_0) \geq g(x_0)>-\infty$. It is also a weak KAM subsolution since $\inf_{m \geq n}\{T_m^-g + m c\}$ is increasing to $\underline T_\infty g$, and $T$ being a capacity we have
 \as{
 T\underline T_\infty g + c = \lim_{n \to \infty}T(\inf_{m \geq n}\{T_m^-g + m c\}) + c
  \leq \liminf_{n \to \infty}(T_{n+1}^-g + (n+1)c)
   = \underline T_\infty g.
   }
   The remaining case is when $\underline T_\infty g:=\liminf_{n\to \infty}(T^ng+nc(\T))$ is not bounded above for any $g\in C(X)$. In other words, $\sup\limits_{x\in X}\sup\limits_n\inf\limits_{m \geq n}T_mg(x) + m c=+\infty$, hence $\alpha_n=\sup\limits_{x\in X}\inf\limits_{m \geq n}T_m^-g + m c\uparrow +\infty$. Since $\inf\limits_{m \geq n}T_m^-g + m c$ is in USC, $\alpha_n$ is attained at some $x_n$. It is now clear that 
   \[
T^ng(x_n)+nc(\T)\geq \inf\limits_{m \geq n}T_mg(x_n) + m c=\alpha_n\uparrow +\infty.
   \]
}

\subsection*{Power bounded Kantorovich operators}

 \defn{Let $\T$ be a standard linear backward transfer on $\P(X)\times \P(X)$, $T$ its Kantorovich operator and $c(T)$ the corresponding Mather constant.
 \begin{enumerate}
 \item  Say that the linear transfer  $\T$ has {\bf bounded oscillations} if
 \begin{equation}\lbl{bounded.osc}
\limsup_{n\to\infty} \{nc(\T)-\inf_{(x, \sigma) \in X\times \P(X)}\T_n(\delta_x, \sigma)\} <+\infty.
 \end{equation}
\item  Say that the Kantorovich operator $T$ is {\bf power bounded} if for any $g \in USC(X)$, 
 \begin{equation}\lbl{bounded.osc}
\limsup_{n\to\infty}\sup_{x\in X}T^ng(x) +nc(\T) 
<+\infty.  
 \end{equation}\lbl{power.bounded}
 \end{enumerate}
 }
 \thm{\lbl{when.bounded} Let $\T$ be a standard linear backward transfer on $\P(X)\times \P(X)$, $T$ its Kantorovich operator and $c(T)$ the corresponding Mather constant.
  \begin{enumerate}
 \item If $\T$  has bounded oscillations, then $T$ is {\it power bounded}.
 
 \item If $\T$ is bounded above, then $\T$  has bounded oscillations. In this case,   
 \begin{equation}
\frac{\T_n(\mu, \nu)}{n} \to c(\T) \quad \hbox{uniformly on ${\cal P}(X)\times {\cal P}(X)$,}
\end{equation}
and for every $g\in C(X)$, 
\begin{equation}\lbl{uniform}
\frac{T^ng(x)}{n}\to -c(T)  \quad \hbox{uniformly on $X$}.
\end{equation}
\end{enumerate}
 }
 \prf{1) For any $g \in USC(X)$ and any $x\in X$, 
 \begin{align*} 
 T^mg(x) +mc(\T) &\leq 
    \sup_{\sigma \in {\cal P}(X)}\{\int_X gd \sigma -\T_m(\delta_x, \sigma)  +mc(\T) \} \no\\
      &\leq  \sup_Yg+ mc(\T) -  
      \inf_{\sigma \in {\cal P}(X)}\T_m(\delta_x, \sigma)\\
       &\leq  \sup_Yg+ mc(\T) -  
      \inf_{(y, \sigma) \in X\times {\cal P}(X)}\T_m(\delta_y, \sigma),   
 \end{align*}
   which readily implies (1).
   
   2) If now $\T$ is bounded above, then $\{\frac{\sup_{\mu,\nu}\T_n(\mu,\nu)}{n}\}_{n \in \N}$ is a sub-additive sequence that satisfies \\
$\frac{\sup_{\mu,\nu}\T_n(\mu,\nu)}{n} \geq \inf_{\mu,\nu}\T(\mu,\nu) > -\infty,$
hence it converges to its infimum.  
Since 
\eqs{
c(\T) = \lim_{n \to \infty}\frac{\inf_{\mu,\nu}\T_n(\mu,\nu)}{n}  \quad \text{and} \quad  \frac{\inf_{\mu,\nu}\T_n(\mu,\nu)}{n} \leq \frac{\sup_{\mu,\nu}\T_n(\mu,\nu)}{n},
}
we conclude that
$c(\T) \leq \inf\limits_{n \in \N}\frac{\sup_{\mu,\nu}\T_n(\mu,\nu)}{n}.
$
Therefore, for all $n \in \N$,
\eqs{
\inf_{\mu,\nu}\T_n(\mu,\nu) \leq n c(\T) \leq \sup_{\mu,\nu}\T_n(\mu,\nu),
}
hence,
\eq{\lbl{uniform_est}
|\T_n(\mu,\nu) - nc(\T)| \leq \sup_{\mu,\nu}\T_n(\mu,\nu) - \inf_{\mu,\nu}\T_n(\mu,\nu).
}
At the same time, we have for any $\mu, \nu \in {\cal P}(X)$ (writing $\inf\limits_{{\cal P}\times {\cal P}} \T_{n}$ for $\inf\limits_{\mu,\nu \in \mcal{P}(X)}\T_n(\mu,\nu)$ for brevity),  
 \[
 \inf\limits_{\mcal{P}\times\mcal{P}} \T_{n-2} + 2\inf\limits_{\mcal{P}\times\mcal{P}}  \T \leq \T_{n}(\mu, \nu) \leq 2\sup\limits_{\mcal{P}\times\mcal{P}}  \T +\inf\limits_{\mcal{P}\times\mcal{P}}  \T_{n-2}, 
 \]
from which follows that 
\begin{align}
\sup_{\mcal{P}\times \mcal{P}}  \T_{n} -\inf_{\mcal{P}\times\mcal{P}}  \T_{n} &\leq 2\sup_{\mcal{P}\times\mcal{P}}  \T +\inf_{\mcal{P}\times\mcal{P}}  \T_{n-2}- \inf\limits_{{\cal P}\times {\cal P}} \T_{n-2} - 2\inf\limits_{{\cal P}\times {\cal P}}  \T \no\\
&=2\sup\limits_{{\cal P}\times {\cal P}}  \T - 2\inf\limits_{{\cal P}\times {\cal P}}  \T \no\\
&=:K < \infty.\lbl{upper_and_lower}
\end{align}
Combining \refn{uniform_est} and \refn{upper_and_lower}, we conclude that
\eq{\lbl{osc}
|\T_n(\mu,\nu) - nc(\T)| \leq K \quad \text{for all $\mu,\nu \in \mcal{P}(X)$ and all $n \in \N$,}
}
and $\T$ has therefore bounded oscillations. This also implies that 
  $\frac{\T_n(\mu, \nu)}{n} \to c(\T)$ uniformly on ${\cal P}(X)\times {\cal P}(X)$.

Now note that $T^n g(x) + nc(\T) = \sup_{\sigma}\{\int g d\sigma - \lf(\T_n(\delta_x, \sigma)-nc(\T)\rt)\}$, hence 
$\sup_{X}g - K \leq T^n g(x) + nc(\T) \leq \sup_{X}g + K,$
which yields  
(\ref{uniform}).
}

 \thm{\label{bdo} 
 If $T$ is a power bounded Kantorovich operator,  then there exists a backward weak KAM solution $h$ in $USC_\sigma (X)$.
    }
\prf{ Since $T$ is power bounded, then for any $g \in USC(X)$, 
$\sup\limits_{m\geq n}(T^mg +mc)$ is bounded above on $X$ for $n$ large enough.  
Setting $S_ng:= \overline{\sup_{m\geq n} T^m g+mc}$, where $\overline h$ is the upper semi-continuous envelope of $h$, we claim that for $n$ large enough,
\begin{itemize}
\item $S_ng$ is bounded above, and $TS_ng +c\geq S_{n+1}g,$
\item The sequence $(S_ng)_n$ decreases to an upper semi-continuous function $\bar T_\infty g$,
  which is a weak KAM supersolution, that is, 
 \eq{ T\bar T_\infty g +c \geq  \bar T_\infty g.
 } 
 \end{itemize}
 Indeed, use the Kantorovich properties of each $T^m$, of the operator $f\to \overline f$,  and the fact that $T\bar h \geq \overline{Th}$ since $T$ is monotone and maps $USC (X)$ into $USC(X)$, we can write
   \as{
 TS_n g +c&=T\big(\overline {(\sup_{m\geq n} T^m g+mc})) +c\\
  &\geq \overline {T (\sup_{m\geq n} T^m g+(m+1)c)}\\
  &\geq \overline {\sup_{m\geq n} (T^{m+1} g+(m+1)c)}\\
    &= \overline {\sup_{m\geq {n+1}} T^{m} g+mc}\\
    &=S_{n+1} g. 
}
Consider now the function 
 \begin{equation}
\bar T_\infty g :=\inf_nS_ng=\inf_n \overline{\sup_{m\geq n} T^m g+mc}\}\geq \overline{\limsup_{n\to\infty} T_ng +nc(\T)},
 \end{equation}
and note that $\bar T_\infty g$ is upper semi-continuous. Moreover, by Lemma \ref{oscillation_limits}, we have for each $n$, 
$$\sup_{x\in X}S_ng(x)\geq  \sup_{x\in X}T_ng(x) +nc(\T) \geq \inf_{x\in X} g (x),$$
and therefore, $\bar T_\infty g$ is proper by Lemma \ref{weak_KAM_for_monotone_decreasing}.\\
Since $T$ is a capacity, we have 
\as{
 T \,\bar T_\infty g +c=T(\inf_nS_ng) +c=\inf_n TS_ng+c\geq \inf_nS_{n+1}g
    = \bar T_\infty g. 
 }
Now the sequence $(T^n\, \bar T_\infty g +nc)_n$ is increasing and since $T$ is power bounded, we have 
that $\sup_nT^n\,  \bar T_\infty g+nc$ is bounded above, hence by setting $h=\sup_nT^n\,  \bar T_\infty g+nc$, we have 
 that $h\in USC_\sigma(X)$, that it is bounded above and since $T$ is a capacity, 
$$Th+c=T(\lim_n\uparrow T^n\,  \bar T_\infty g +nc) +c=\lim_n\uparrow (T^{n+1}\, \bar T_\infty g +(n+1)c)=h.$$
} 
\begin{corollary}Let $\T$ be a backward linear transfer that is bounded above, then there exists a backward weak KAM solution $h$ in $USC_\sigma (X)$.
\end{corollary}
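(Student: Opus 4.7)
The plan is to combine the results already established in this section, namely Theorem \ref{when.bounded} and Theorem \ref{bdo}, through a short chain of implications. Specifically, the hypothesis that $\T$ is bounded above is exactly the input to part (2) of Theorem \ref{when.bounded}, and power boundedness of the associated operator $T$ is exactly the hypothesis of Theorem \ref{bdo}. So the entire proof reduces to assembling the pieces in the correct order.

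First I would note that since $\T$ is bounded above, part (2) of Theorem \ref{when.bounded} immediately yields that $\T$ has bounded oscillations, i.e.
\[
\limsup_{n\to\infty}\bigl\{nc(\T) - \inf_{(x,\sigma)\in X\times \P(X)} \T_n(\delta_x, \sigma)\bigr\} < +\infty.
\]
(In fact the stronger uniform bound \refn{osc} applies, but we only need the oscillation bound.) Before invoking this, I should briefly verify the standing hypothesis of Theorem \ref{when.bounded}, namely that $\T$ is a standard backward linear transfer; this is automatic, because a linear transfer bounded above has $\inf_{\sigma}\T(\delta_x,\sigma)<+\infty$ for all $x\in X$, so $T$ maps into $USC_f(X)$. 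It is also worth pointing out that $c(\T)$ is finite in this setting, since it is sandwiched between $\inf_{\mu,\nu}\T(\mu,\nu)$ and $\sup_{\mu,\nu}\T(\mu,\nu)$, which is the standing assumption needed throughout.

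Next, having bounded oscillations, part (1) of Theorem \ref{when.bounded} gives that the Kantorovich operator $T$ is power bounded, i.e. for every $g \in USC(X)$,
\[
\limsup_{n\to\infty}\,\sup_{x\in X}\bigl(T^n g(x) + n c(\T)\bigr) < +\infty.
\]
Finally, applying Theorem \ref{bdo} to the power bounded operator $T$ produces an upper semi-continuous function $h\in USC_\sigma(X)$, bounded above and proper, satisfying $Th + c(\T) = h$, which is precisely a backward weak KAM solution in $USC_\sigma(X)$.

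Since all the heavy lifting has already been done in Theorems \ref{when.bounded} and \ref{bdo}, there is really no ``hard part'' left in this corollary; the only point requiring any care is to double-check that the boundedness-above hypothesis is strong enough to place us in the standard (in fact regular) setting needed for the preceding theorems, which is immediate.
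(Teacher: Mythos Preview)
Your proof is correct and follows exactly the same route as the paper's own argument: invoke Theorem~\ref{when.bounded} to pass from ``bounded above'' to ``bounded oscillations'' and then to ``power bounded,'' and finally apply Theorem~\ref{bdo}. The paper compresses this into a single sentence, while you have spelled out the intermediate implication (bounded oscillations $\Rightarrow$ power bounded) and the verification that $\T$ is standard with finite Mather constant, but the logical content is identical.
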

\prf{It suffices to apply Theorem \ref{bdo} since by Lemma \ref{when.bounded}, $\T$ has then bounded oscillations. 
}

\section{Idempotent Kantorovich operators}

\defn{1) A map $T: USC (X) \to USC(X)$ is said to be \textbf{idempotent} if $T \circ T g = T g$ for all $g \in USC(X)$.

2) A functional $\T:\mcal{P}(X)\times \mcal{P}(X) \to \R\cup\{+\infty\}$ is said to be {\bf idempotent} if $\T\star \T=\T$, that is if 
 for all $\mu,\nu \in \mcal{P}(X)$, 
\eqs{
\T(\mu,\nu) := \inf\{\T(\mu,\sigma) + \T(\sigma, \nu)\,;\, \sigma \in \mcal{P}(X)\}.
}
}
An immediate consequence of Corollary \ref{conv}  is the following 
\prop{A backward linear transfer $\T$ is idempotent if and only if its corresponding backward Kantorovich operator $T$ is idempotent.
}
\prop{\lbl{A_factorisable} Let $T$ be an idempotent backward Kantorovich operator, then $c(T)=0$, and if $\T$ is the associated linear transfer, then 
\begin{enumerate}
\item The set $\mcal{N} := \{\mu \in \mcal{P}(X)\,;\, \T(\mu,\mu) = 0\}$ is non-empty.
 
 \item For all $\mu,\nu \in \mcal{P}(X)$, we have 
 \eq{
\T(\mu,\nu) = \inf\{\T(\mu,\sigma) + \T(\sigma, \nu)\,;\, \sigma \in \mcal{N}\}.
}
\end{enumerate} 
}
We shall then say that {\it $\T$ is Null-factorizable}.
\prf{First note that since $\T_n=\T$ for all $n$, we have
\[
c(\T)=\lim_{n\to +\infty} \frac{\inf_{\mu, \nu}\T_n(\mu, \nu)}{n}= \lim_{n\to \infty}\frac{\inf_{\mu, \nu}\T (\mu, \nu)}{n}=0.
\]
For the remaining claim, we shall use repeatedly the following observation:  if 
$\sigma_1,\sigma_2$ are such that 
\eqs{
\T(\mu,\nu) = \T(\mu,\sigma_1) +  \T(\sigma_1, \nu) \quad \text{and}\quad \T(\sigma_1, \nu) = \T(\sigma_1,\sigma_2) + \T(\sigma_2, \nu), 
}
then $\T(\mu,\sigma_1) + \T(\sigma_1,\sigma_2) = \T(\mu,\sigma_2)$.
Indeed 
\begin{align*}
\T(\mu,\nu) &= \T(\mu,\sigma_1) + \T(\sigma_1, \sigma_2)+ \T(\sigma_2, \nu)\\
&\geq \T\star\T(\mu, \sigma_2) + \T(\sigma_2, \nu)\\
&= \T(\mu, \sigma_2) + \T(\sigma_2, \nu)\\
&\geq \T\star\T(\mu,\nu)\\
&= \T(\mu,\nu)
\end{align*}
so all the inequalities are in fact equalities. This means in particular comparing the first and third line that $\T(\mu,\sigma_1) + \T(\sigma_1, \sigma_2) = \T(\mu, \sigma_2)$.

Fix now $\mu, \nu \in \mathcal{P}(X)$.  
There exists then $\sigma_1 \in \mathcal{P}(X)$ such that 
\begin{equation}\lbl{first_inf}
\T(\mu,\nu) = \T(\mu,\sigma_1) + \T(\sigma_1, \nu).
\end{equation}
Similarly, there exists a $\sigma_2$ such that 
\begin{equation}\lbl{second_inf}
\T(\sigma_1, \nu) = \T(\sigma_1, \sigma_2)+ \T(\sigma_2, \nu).
\end{equation}
Adding \refn{first_inf} and \refn{second_inf}, we have
\begin{equation}\lbl{inf_equality_double}
\T(\mu,\nu) = \T(\mu,\sigma_1) + \T(\sigma_1, \sigma_2)+ \T(\sigma_2, \nu).
\end{equation}
From the above observation, we have
\begin{equation}\lbl{split1}
\T(\mu,\sigma_1) + \T(\sigma_1, \sigma_2) = \T(\mu,\sigma_2).
\end{equation}

We therefore have $(\sigma_1, \sigma_2)$ such that \refn{first_inf}, \refn{second_inf},   and \refn{split1}  (and, consequently, \refn{inf_equality_double}) hold.
Continue this process with $\T(\sigma_2, \nu)$, to find a $\sigma_3$ satisfying $\T(\sigma_2, \nu) = \T(\sigma_2, \sigma_3) + \T(\sigma_3, \nu)$, and a $\sigma_4$ such that $\T(\sigma_3, \nu)=\T(\sigma_3, \sigma_4)+\T(\sigma_4, \nu)$. \\
 Inductively, we get a sequence $(\sigma_k)_{k \in \N}$ with the property that for $(\sigma_1, \sigma_2, \ldots, \sigma_k)$, we have 
\eq{\lbl{prop_idem_1}
\T(\mu,\nu) = \T(\mu,\sigma_1) + \sum_{i = 1}^{k-1}\T(\sigma_{i},\sigma_{i+1}) + \T(\sigma_k, \nu),
}
\al{
\T(\mu,\sigma_1) + \T(\sigma_1,\sigma_2) &= \T(\mu, \sigma_2)\lbl{prop_idem_2}\\
\T(\sigma_{k-1}, \sigma_k) + \T(\sigma_k, \nu) &= \T(\sigma_{k-1}, \nu), \lbl{prop_idem_3}
}
as well as
\begin{equation}\lbl{prop_idem_4}
\sum_{i = \ell}^{m} \T(\sigma_{i}, \sigma_{i+1}) =  \T(\sigma_{\ell}, \sigma_{m+1})
\end{equation}
whenever $1 \leq \ell < m \leq k-1$. 

In particular, the same properties \refn{prop_idem_1}, \refn{prop_idem_2}, \refn{prop_idem_3}, \refn{prop_idem_4} above hold if we take instead $m+1$ terms $(\sigma_{k_1}, \ldots, \sigma_{k_{m+1}})$ of any increasing subsequence $k_j$ of $k$. In particular, this means
\begin{equation}\label{sebseq1}
\T(\mu,\sigma_{k_1}) + \sum_{j = 1}^{m} \T(\sigma_{k_j}, \sigma_{k_{j+1}}) + \T(\sigma_{k_{m+1}}, \nu) =  \T(\mu, \nu).
\end{equation}
We take now a subsequence $\sigma_{k_j}$ of $\sigma_k$ converging to some $\sigma \in \mathcal{P}(X)$. By the weak$^*$ lower semi-continuity of $\T$, we have
\as{
\liminf_{j \to \infty}\T(\sigma_{k_j}, \sigma_{k_{j+1}}) \geq \T(\sigma,\sigma), \quad 
\liminf_{j \to \infty}\T(\mu, \sigma_{k_{j}}) \geq \T(\mu,\sigma),\,\,{\rm and}\,\,
\liminf_{j \to \infty}\T(\sigma_{k_j}, \nu) \geq \T(\sigma, \nu).
}
In particular, given $\epsilon > 0$, for all but finitely many $j$, it must hold that
\al{
\T(\sigma_{k_j}, \sigma_{k_{j+1}}) &\geq \T(\sigma,\sigma) - \epsilon\label{biggerthan1}\\
\T(\mu, \sigma_{k_{j}}) &\geq \T(\mu,\sigma) - \epsilon\label{biggerthan2}\\
\T(\sigma_{k_j}, \nu) &\geq \T(\sigma, \nu) - \epsilon \label{biggerthan3}
}
so up to removing the first $N$ terms for a finite $N = N_\epsilon$, we may assume we have a subsequence $\sigma_{k_j}$ satisfying \refn{biggerthan1}, \refn{biggerthan2}, and \refn{biggerthan3} for all $j$, as well as \refn{sebseq1} .

Applying the inequalities of (\ref{biggerthan1}), (\ref{biggerthan2}), and (\ref{biggerthan3}), to (\ref{sebseq1}), we obtain
\begin{equation*}
\T(\mu,\nu) \geq \T(\mu,\sigma) + m\T(\sigma,\sigma) + \T(\sigma,\nu)- (m+2)\epsilon
\end{equation*}
for $m \geq 1$. From the fact that $\T = \T \star \T$, we have $\T(\mu,\sigma) + \T(\sigma,\nu) \geq \T\star\T(\mu,\nu) = \T(\mu,\nu)$, so the above inequality implies 
$\T(\sigma,\sigma) \leq \frac{m+2}{m}\epsilon \leq 2\epsilon.$ 
As $\epsilon > 0$ is arbitrary, we obtain $\T(\sigma,\sigma) \leq 0$, and consequently $\T(\sigma,\sigma) = 0$ (the reverse inequality following from $\T = \T \star \T$). 

Finally, we note that $\T (\mu,\nu) = \T(\mu,\sigma_{k_j}) + \T(\sigma_{k_j},\nu)$ for all $j$, so at the $\liminf$, we find that
$\T(\mu,\nu) \geq \T(\mu,\sigma) + \T(\sigma,\nu).$
The reverse inequality is immediate since $\T = \T \star \T$.
}
Here are a few examples of idempotent  transfers.
\ex{[Convex energy]
 If $I$ is any bounded below, convex, and weak$^*$ lower semi-continuous functional on $\P(Y)$, and $k:=\inf\{I(\sigma); \sigma \in {\mathcal P}(Y)\}$, then $\T(\mu, \nu) := I(\nu)-k$ is an idempotent backward linear transfer with an idempotent Kantorovich map $Tg := I^*(g) +k$, where $I^*$ is the Legendre transform of $I$ on $C(X)$. 
}
\ex{ [Markov operator]Any idempotent Markov operator (i.e.,  bounded positive linear operator $T$ with $T^2=T$ and $T1=1$), and in particular, any point transformation $\sigma$ such that $\sigma^2=\sigma$ is obviously an idempotent Kantorovich operator. 
}

\ex{[Balayage transfer] Let $X$ is a convex compact subset of a locally convex topological vector space, and consider
 \begin{equation}
{\mathcal B}(\mu, \nu)=\left\{ \begin{array}{llll}
0 \quad &\hbox{if $\mu \prec \nu$}\\
+\infty \quad &\hbox{\rm otherwise,}
\end{array} \right.
\end{equation} 
 where 
 $
 \mu \prec \nu$ if and only if  $\int_X\phi \, d\mu \leq \int_X\phi \, d\nu$ for all convex continuous functions $\phi$. It follows from standard Choquet theory \cite{G}, that ${\mathcal B}$ is a backward linear transfer whose Kantorovich operator is $Tf={\hat f}$, where ${\hat f}$ is the upper semi-continuous concave upper envelope of $f$, which is clearly an idempotent operator.  
 
 Note that ${\mathcal B}$ is also a forward  linear transfer with $T^+f$ being the lower semi-continuous convex lower envelope of $f$. 
}

\ex{[Optimal Mass Transport] If $\T_c$ is an optimal mass transport associated to a bounded below lower semi-continuous cost function $c: X\times X \to \R\cup\{+\infty\}$, then $\T_c$ is idempotent if  
\begin{equation}
c(x,x)=0, \quad{\rm and}\quad  c(x,z) \leq c(x, y) +c (y, z) \quad \hbox{for all $x, y, z$ in $X$.}
\end{equation}
In particular, this implies
$\T_c(\mu, \nu)=\sup\{\int_XT_c^-g \, d (\nu-\mu); g\in C(X)\}.$ 
For example, if $c(x,y) = d_X(x,y)^p$ for $0 < p \leq 1$,  then the corresponding optimal mass transport is idempotent since $c$ satisfies the reverse triangle inequality, and therefore $c\star c(x,y) = \inf_{z \in X}\{c(x,z) + c(z,y)\} = c(x,y)$. 
}

\ex{[An optimal Skorohod embedding]
 The following transfer  was considered in \cite{GKP3}
 	\begin{align} \label{eqn:Skorokhod_cost}
		{\mathcal T}(\mu,\nu) :=  \inf
		\Big\{\mathbb{E}\Big[ \int_0^\tau L(t,B_t)dt\Big];\ \tau \in S(\mu, \nu)  
		\Big\},  
	\end{align}
where $S(\mu, \nu)$ denotes the set of (possibly randomized) stopping times with finite expectation such that $\nu$ is realized by the distribution of $B_\tau$ (i.e., $B_\tau \sim\nu$ in our notation), where $B_t$ is Brownian motion starting with $\mu$ as a source distribution, i.e., $B_0\sim \mu$. Note that  ${\mathcal T}(\mu,\nu)=+\infty$ if $S(\mu, \nu)=\emptyset$, which is the case if and only if $\mu$ and $\nu$ are not in subharmonic order, i.e., there is $\phi$ subharmonic such that $\int \phi d\mu > \int \phi d\nu$. 

We claim that $\T$ is idempotent provided $t\to L(t, x)$ is decreasing. Indeed,  the backward linear transfer is given by $Tg=J_g(0, \cdot)$,  
 where $J_g:\R^+\times \R^d\rightarrow \R$ is defined via the 
	dynamic programming principle
	\begin{align}\label{eqn:dynamic_programming}
		J_g(t,x) := \sup_{\tau \in \mathcal{R}^{t,x}}\Big\{\mathbb{E}^{t,x}\Big[g(B_\tau)-\int_t^\tau L(s,B_s)ds\Big]\Big\},
	\end{align}
	where the expectation superscripted with $t,x$ is with respect to the Brownian motions satisfying $B_t=x$, and the minimization is over all finite-expectation stopping times $\mathcal{R}^{t,x}$ on this restricted probability space such that $\tau \ge t$.
	$J_g(t, x)$ is actually a ``variational solution" for 
the quasi-variational Hamilton-Jacobi-Bellman equation:
	\begin{align} 
		 \min\left\{\begin{array}{r} J(t,x) - g(x)\\ -\frac{\partial}{\partial t}J(t,x)-\frac{1}{2}\Delta J(t,x)+L(t,x)\end{array}\right\}=0.
	\end{align}
	Note that $J_g (t, x) \geq g (x)$, that is $Tg \geq g$ for every $g$, hence $T^2g \geq T g$.
	
Assume now $t\to L(t, x)$ is decreasing, which yields that $t\to J(t, x)$ is increasing. Given $\epsilon > 0$, fix a stopping time $\tau_\epsilon$ such that
$T^2 g(x) = J_{Tg} (0, x)\leq \mathbb{E}^{t,x}\Big[Tg(B_{\tau_\epsilon})-\int_t^{\tau_\epsilon} L(s,B_s)ds\Big]+\epsilon. $ Since 
 $Tg(B_{\tau_\epsilon}) = J_g(0, B_{\tau_\epsilon}) \leq J_g(t, B_{\tau_\epsilon})$, we have
\begin{align*}
T^2g(x) &\leq \mathbb{E}^{t,x}\Big[J_g(0, B_{\tau_\epsilon})-\int_t^\tau L(s,B_s)ds\Big]+\epsilon\\
&\leq \mathbb{E}^{t,x}\Big[J_g(t, B_{\tau_\epsilon})-\int_t^{\tau_\epsilon} L(s,B_s)ds\Big]+\epsilon\\
&\leq J_g (0,x)+\epsilon = Tg(x) + \epsilon,  
\end{align*}
where the last inequality uses the supermartingale property of the process $t\to J_g(t, B_{\tau_\epsilon})-\int_t^{\tau_\epsilon} L(s,B_s)ds$. It follows that  $T^2g \leq T g$, and $T$ is therefore idempotent.
}

\defn{Let $\T$ be both a backward and forward transfer with Kantorovich operators $T^-$ and $T^+$. A couple $(f, g)$
 is said to be a \textbf{conjugate pair} if
\begin{equation}
\hbox{$T^-g=f$\quad  and \quad $T^+f=g$.}
\end{equation}
}
\noindent The following proposition shows in particular that for any function $g\in C(Y)$, the couple $(T^-g, T^+\circ T^-g)$ form a conjugate pair. 

\prop{\label{few} Suppose  ${\mathcal T}: {\mathcal P}(X)\times {\mathcal P}(Y) \to \R\cup \{+\infty\}$ is both a forward and backward linear transfer, with $T^+:C(X) \to LSC(Y)$ (resp., $T^-:C(Y) \to USC(X)$) being the corresponding forward (resp., backward) Kantorovich operator. 
Then for $g\in LSC(Y)$
(resp.,\, $f\in USC(X)$), we have  
\begin{equation}\label{compare}
\hbox{$T ^+\circ T ^-g \geq g$  \quad  \quad $T ^-\circ T ^+f \leq f$,}
\end{equation}
and for $g\in C(Y)$
(resp.,\, $f\in C(X)$), we have 
 \begin{equation} 
 \hbox{$T ^-\circ T ^+\circ T ^-g=  T ^-g$ \quad and \quad $T ^+\circ T ^-\circ T ^+f = T ^+f.$}
 \end{equation} 
 In particular, the couple $(T^-g, T^+\circ T^-g)$ form a conjugate pair.
}
\prf{Since $f\in USC(X)$, $T^+f\in LSC_\delta(Y)$, and 
$$T^- \circ T^+ f(x) = \sup_{\sigma \in \mcal{P}(Y)}\inf_{\mu \in \mcal{P}(X)}\{\int_{X}fd\mu + \T(\mu,\sigma) - \T(\delta_x,\sigma)\}
\leq f(x),
$$
and by a similar argument, $T^+ \circ T^- g(y) \geq g(y)$ for all $g \in LSC(Y)$.

If $g \in C(Y)$, then by applying (\ref{compare}) with $f=T^-g$, we get that 
$T^-\circ T^+ \circ T^-g \leq T^-g$. At the same time applying $T^-$ to the inequality $T^+\circ T^- g \geq g$, implies $T^-\circ T^+\circ T^-g \geq T^-g$. Therefore we have the equality $T^-\circ T^+\circ T^-g = T^-g$. Similarly, we have $T ^+\circ T ^-\circ T ^+f = T ^+f$.}

\thm{\lbl{lipschitz}
Suppose $\T: \mcal{P}(X)\times \mcal{P}(X) \to \R \cup \{+\infty\}$ is a bounded below, Null-factorisable weak$^*$-lower semi-continuous and convex functional, where $\mcal{N} := \{\mu \in \mcal{P}(X)\,;\, \T(\mu,\mu) = 0\}$, and let $\Phi: {\cal N}\to \R$ be a bounded functional that is $\T$-Lipschitz on $\mcal N$, i.e., 
\eqs{
\Phi(\nu) - \Phi(\mu) \leq \T(\mu,\nu),\quad \text{for all }\mu,\nu \in \mcal{N}.
}
Then, the following hold:
\begin{enumerate}
\item 
There exists ${\bar f}\in C(X)$ such that 
$\Phi(\mu)= \int_{X} {\bar f}d\mu \quad \text{for all $\mu \in \mcal{N}$.}$
\item If $\T$ is also a backward linear transfer such that 
$\sup\limits_{\nu \in \mcal{P}(X)}\inf\limits_{\mu \in \mcal{N}}\T(\mu,\nu) < +\infty,
$
then
\begin{equation}\lbl{equality_on_N}
\Phi(\mu) =\int_X T^-{\bar f}d\mu = \int_X {\bar f}d\mu \hbox{\,  for every  $\mu\in \cal N$. }
\end{equation} 

\item If in addition, $\T$ is both a forward and backward linear transfer, then 
\begin{equation*}
\Phi(\mu)=\int_X {\bar f}d\mu=\int_X T^-{\bar f}d\mu=\int_X T^+\circ T^-{\bar f}d\mu \hbox{\,  for every  $\mu\in \cal N$. }
\end{equation*} 
and the functions  $\psi_0:=T^-{\bar f}$ and $\psi_1:=T^+\circ T^-{\bar f}$ are conjugate in the sense that $\psi_0=T^-\psi_1$ and $\psi_1=T^+\psi_0$.\\
 Moreover, if $g$ is any other function in $C(X)$ such that $\int_X g d\mu=\Phi(\mu)$  for all $\mu \in \cal N$, then 
\begin{equation}\label{unique}
\psi_0 \leq T^-g \quad \hbox{and}\quad \psi_1 \geq T^+g.
\end{equation}
 \end{enumerate}
 }
 \prf{
 1. Let $\Phi$ be such that $\mu \to \Phi (\mu)$ is $\T$-Lipschitz on $\cal N$ and define 
$$\Phi_0(\mu) :=\sup\limits_{\sigma \in {\cal N}} \{\Phi (\sigma) -\T (\mu, \sigma)\} \quad \hbox{and \quad $\Phi_1(\nu):=\inf\limits_{\sigma \in {\cal N}} \{\Phi (\sigma) +\T (\sigma, \nu)$\}}.
$$ 
Note that $\Phi_0$ and $\Phi_1$ are both finite on $\mcal{N}$, but in general $\Phi_0$ may be $-\infty$ (resp., $\Phi_1$ may be $+\infty$) on certain subsets of $\mcal{P}(X)$, depending on the effective domain of $\T$.

We now show that $\Phi_0 \leq \Phi_1$ on $\mcal{P}(X)$. This is trivally true if one of $\Phi_0$, $\Phi_1$ is not finite, so assume $\mu \in \mcal{P}(X)$ is such that $\Phi_0(\mu), \Phi_1(\mu) \in \R$. Then, by definition of $\Phi_0$, $\Phi_1$, and the fact that $\Phi$ is $\T$-Lipschitz on $\mcal{N}$, and $\T$ is $\mcal{N}$-factorisable, we may write
\begin{align*}
\Phi_0(\mu) - \Phi_1(\mu)=\sup\limits _{\sigma, \tau \in {\cal N}}\{\Phi (\sigma) -\T (\mu, \sigma) -\Phi (\tau) -\T (\tau, \mu)\}
\leq \sup\limits _{\sigma, \tau \in {\cal N}}\{\Phi (\sigma) - \Phi (\tau) -\T(\tau, \sigma)\}
 \leq 0.
\end{align*}

Note now that $\Phi_0$ is concave and weak$^*$ upper semi-continuous, while $\Phi_1$ is convex and weak$^*$ lower semi-continuous, on the convex subset $\mcal{P}(X)$ of the real vector space $\mcal{M}(X)$. Thus by the Hahn-Banach, there exists $\bar{f} \in C(X)$ such that $\mu \mapsto \int_{X}{\bar f}d\mu$ on $\mcal{P}(X)$ lies between $\Phi_0$ and $\Phi_1$:
\begin{equation}\lbl{sandwich}
\Phi_0 (\mu) \leq \int_{X}{\bar f}d\mu \leq \Phi_1(\mu) \quad \hbox{for all  $\mu\in {\cal P}(X)$.}
\end{equation} 
On the other hand, it also holds by the definition of $\Phi_0$ and $\Phi_1$, that
\begin{equation}\label{on.N}
\Phi_1(\mu) \leq \Phi (\mu) \leq \Phi_0 (\mu) \hbox{\quad for all $\mu\in {\cal N}$} 
\end{equation}
so that  \refn{sandwich} and \refn{on.N} together shows that
\eqs{
\Phi (\mu) = \Phi_1(\mu) = \Phi_0(\mu) = \int_{X} {\bar f}d\mu \hbox{\quad for all $\mu\in {\cal N}$}.  
}
2. Suppose now $\T$ is a backward linear transfer with $T^-$ as its Kantorovich operator. If $\mu\in {\cal N}$, 
\as{
\int_XT^-{\bar f}d\mu =\sup\limits_{\nu \in {\cal P}(X)} \{\int_X {\bar f}d\nu-\T(\mu, \nu)\} \geq \int_X {\bar f}d\mu -\T(\mu, \mu)= \int_X {\bar f}d\mu.
}
Assume now that $\sup_{\nu \in \mcal{P}(X)}\inf_{\mu \in \mcal{N}}\T(\mu,\nu) < +\infty$. It can be checked that this implies $\Phi_0$, $\Phi_1$ are bounded. Then it holds that
\as{
\sup_{\nu \in {\cal P}(X)} \{\Phi_1(\nu) -\T(\mu, \nu)\} \leq \sup_{\nu \in \mcal{P}(X)}\Phi_1(\nu) - C < +\infty,
}
where $C$ is a lower bound for $\T$. This means that for any $\mu \in \mcal{N}$, by \refn{sandwich}, we can write
\eq{\lbl{Phi_0}
-\infty < \int_XT^-\bar{f} d\mu =\sup\limits_{\nu \in {\cal P}(X)} \{\int_X {\bar f}d\nu -\T(\mu, \nu)\} \leq \sup\limits_{\nu \in {\cal P}(X)} \{\Phi_1(\nu) -\T(\mu, \nu)\}.
}
In view of item 1, we will be done if we show that we have the conjugate formula, 
\eq{\lbl{BB222}
\sup_{\sigma \in {\cal P}(X)} \{\Phi_1(\sigma)-\T(\mu, \sigma) \} = \Phi_0(\mu).
}
To this end, for every $\mu \in {\cal P}(X)$, we have 
\begin{align*}
\Phi_0(\mu)=\sup\limits_{\sigma \in {\cal N}} \{ \Phi (\sigma) -\T (\mu, \sigma)\}
=\sup\limits_{\sigma \in {\cal N}} \{ \Phi_1(\sigma) -\T (\mu, \sigma)\} 
\leq \sup\limits_{\sigma \in {\cal P}(X)} \{  \Phi_1(\sigma) -\T (\mu, \sigma)\}.   
\end{align*}
On the other hand, for any $\nu, \mu \in {\cal P}(X)$, we have 
\begin{align*}
\Phi_1(\nu)-\Phi_0(\mu)&=\inf\limits _{\sigma, \tau \in {\cal N}}\{\Phi (\sigma) +\T(\sigma, \nu)- \Phi (\tau) +\T(\mu, \tau)\} \\
&\leq \inf\limits _{\sigma, \tau \in {\cal N}}\{\T(\sigma, \nu) +\T (\tau, \sigma) +\T(\mu, \tau)\} \\
&\leq \inf\limits _{\sigma \in {\cal N}}\{\T(\sigma, \nu) +\T (\mu, \sigma)\} \\
&=\T (\mu, \nu).
\end{align*}
This shows \refn{BB222}.

3. Suppose in addition that $\T$ is a forward linear transfer with $T^+$ as a Kantorovich operator. Recalling the property $T^+\circ T^- f \geq f$, we have
$\int_XT^+\circ T^-{\bar f} d\mu \geq \int_X {\bar f} d\mu \geq \Phi_0(\mu)$.
On the other hand, by \refn{Phi_0} and \refn{BB222},
\[
\int_XT^+\circ T^-{\bar f}d\mu=\inf\limits _{\sigma \in {\cal P}(X)} \{\int_XT^-{\bar f}d\sigma +\T(\sigma, \mu)\}\leq \inf\limits _{\sigma \in {\cal P}(X)} \{\Phi_0(\sigma )+\T(\sigma, \mu)\}.
\]
In other words,
$T^-{\bar f}$ and $T^+\circ T^-{\bar f}$ are two conjugate functions verifying
\[
\int_XT^+\circ T^-{\bar f} d\mu=\int_XT^-{\bar f} d\mu=\Phi (\mu) \quad \hbox{for all $\mu\in {\cal N}$}.  
\]
Finally, to prove (\ref{unique}), let $g$ be any function in $C(X)$ such that $\int_X g d\mu=\Phi(\mu)$  for all $\mu \in \cal N$, then \begin{align*}
\int_XT^-{\bar f} d\mu \leq \Phi_0(\mu)=\sup \{\Phi (\sigma) -\T(\mu, \sigma); \sigma \in {\cal N}\}
\leq \sup \{\int_X gd\sigma -\T(\mu, \sigma); \sigma \in {\cal P}(X)\}
=\int_X T^-g d \mu.
\end{align*}
On the other hand, 
\begin{align*}
\int_XT^+\circ T^-{\bar f} d\mu &=\inf\{\int_XT^-{\bar f} d\sigma +\T(\sigma, \mu); \sigma \in {\cal P}(X)\}\\
&=\inf\{\int_XT^-{\bar f} d\sigma +\T(\sigma, \lambda)+\T(\lambda, \mu); \lambda \in {\cal N}, \sigma \in {\cal P}(X)\}\\
&=\inf\{\int_XT^+\circ T^-{\bar f} d\lambda  +\T(\lambda, \mu); \lambda \in {\cal N}\}\\
&=\inf\{\int_Xg d\lambda  +\T(\lambda, \mu); \lambda \in {\cal N}\}\\
&\geq \inf\{\int_Xg d\lambda  +\T(\lambda, \mu); \lambda \in  {\cal P}(X)\}\\
&=\int_XT^+g d\mu, 
\end{align*}
which completes the proof.
 }
\begin{corollary}
Let $\T$ be an idempotent  backward and forward linear transfer $\T$ such that 
\eqs{
\sup\limits_{\nu \in \mcal{P}(X)}\inf\limits_{\mu \in \mcal{N}}\T(\mu,\nu) < +\infty. 
}
Then, for any $f\in C(X)$, there is a function ${\bar f}\in C(X)$ such that 
\eq{
T^-{\bar f} \leq T^-f,\quad \quad T^+\circ T^-\bar f \geq T^+\circ T^-f, 
}
and 
\eq{
\int_XT^-\bar f d\mu=\int_XT^+\circ T^-\bar f d\mu \hbox{\quad for all measures $\mu$ with $\T(\mu, \mu)=0$}.
}
In other words, we have two conjugate functions $\psi_0:=T^-{\bar f}$ and $\psi_1=T^+\circ T^-\bar f$ such that 
\eqs{
\int_X\psi_0 d\mu=\int_X\psi_1 d\mu \hbox{\quad for all minimal measures $\mu$}.
}
\end{corollary}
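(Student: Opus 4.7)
The plan is to derive this corollary as an application of Theorem \ref{lipschitz}(3) to the auxiliary functional $\Phi:\mcal{N}\to\R$ defined by $\Phi(\mu):=\int_X T^-f\,d\mu$. Since $\T$ is idempotent, Proposition \ref{A_factorisable} guarantees that $c(\T)=0$ and that $\T$ is $\mcal{N}$-factorizable, so this set-up is consistent, and the running hypothesis $\sup_\nu\inf_{\mu\in\mcal{N}}\T(\mu,\nu)<+\infty$ is exactly what Theorem \ref{lipschitz}(3) requires.

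First I would verify that $\Phi$ is bounded and $\T$-Lipschitz on $\mcal{N}$. The crucial observation is the conjugate identity $\int_X T^-f\,d\mu=\int_X T^+\circ T^-f\,d\mu$ for every $\mu\in\mcal{N}$, which I would obtain by combining $T^-\circ T^+\circ T^-f=T^-f$ (Proposition \ref{few}) with the Legendre formulas for $T^-$ and $T^+$ and the identity $\T(\mu,\mu)=0$ on $\mcal{N}$. The Legendre formula $\int T^+g\,d\nu=\inf_\sigma\{\int g\,d\sigma+\T(\sigma,\nu)\}$ applied with $g=T^-f$ and trial measure $\sigma=\mu$ then yields $\Phi(\nu)\leq\Phi(\mu)+\T(\mu,\nu)$ on $\mcal{N}$. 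Boundedness is clear: $T^-f\in USC(X)$ is bounded above on the compact $X$, while $T^+\circ T^-f\geq f$ gives $\Phi(\mu)\geq\min_X f$. Applying Theorem \ref{lipschitz}(3) then produces $\bar f\in C(X)$ such that $\Phi(\mu)=\int \bar f\,d\mu=\int T^-\bar f\,d\mu=\int T^+\circ T^-\bar f\,d\mu$ for every $\mu\in\mcal{N}$, which immediately yields the integral equality on $\mcal{N}$ asserted in the corollary, and Proposition \ref{few} confirms that $(\psi_0,\psi_1):=(T^-\bar f,T^+\circ T^-\bar f)$ is a conjugate pair.

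For the pointwise inequalities $T^-\bar f\leq T^-f$ and $T^+\circ T^-\bar f\geq T^+\circ T^-f$, I would argue directly rather than invoke the uniqueness clause of Theorem \ref{lipschitz}, since the natural test function $T^-f$ lies only in $USC(X)$. Because $\T=\T\star\T$, both $T^-$ and $T^+$ are idempotent; combining this with the $\mcal{N}$-factorization $\T(\mu,\sigma)=\inf_{\lambda\in\mcal{N}}\{\T(\mu,\lambda)+\T(\lambda,\sigma)\}$ allows me to swap the order of $\sup$ and $\inf$ in the Legendre formula to get
\begin{align*}
\int T^-\bar f\,d\mu&=\sup_{\lambda\in\mcal{N}}\Bigl\{\int T^-\bar f\,d\lambda-\T(\mu,\lambda)\Bigr\}=\sup_{\lambda\in\mcal{N}}\Bigl\{\int T^-f\,d\lambda-\T(\mu,\lambda)\Bigr\}\\
&\leq\sup_{\sigma\in\mcal{P}(X)}\Bigl\{\int T^-f\,d\sigma-\T(\mu,\sigma)\Bigr\}=\int T^-(T^-f)\,d\mu=\int T^-f\,d\mu
\end{align*}
for every $\mu\in\mcal{P}(X)$, and testing against $\delta_x$ yields $T^-\bar f\leq T^-f$ on $X$. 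A symmetric computation with the Legendre formula for $T^+$ (with $\sup$ and $\inf$ interchanged) yields $\int T^+\circ T^-\bar f\,d\mu\geq\int T^+(T^+\circ T^-f)\,d\mu=\int T^+\circ T^-f\,d\mu$, hence $T^+\circ T^-\bar f\geq T^+\circ T^-f$ pointwise.

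The main obstacle I anticipate is the implicit use of the Legendre-transform formulas on extensions of $T^-$ and $T^+$ beyond $C(X)$: the chain above applies them to $T^-\bar f, T^-f\in USC(X)$ and to $T^+\circ T^-f\in LSC(X)$. This is justified by Theorem \ref{capacity}(6) for $T^-$ and by the analogous forward statement for $T^+$, but checking that the interchange of $\sup$ and $\inf$ (and the composition $T^+(T^+\circ T^-f)$) remain valid in these extended classes is the only place requiring genuine verification.
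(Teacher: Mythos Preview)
Your overall strategy---apply Theorem \ref{lipschitz} to the functional $\Phi(\mu)=\int_X T^-f\,d\mu$---is exactly the paper's approach, and your treatment of the pointwise inequalities $T^-\bar f\le T^-f$, $T^+\circ T^-\bar f\ge T^+\circ T^-f$ via idempotence and $\mcal N$-factorization is in fact more explicit than the paper's terse ``the conclusion follows.''

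There is, however, a genuine gap in your verification that $\Phi$ is $\T$-Lipschitz. Your ``crucial observation'' $\int_X T^-f\,d\mu=\int_X T^+\circ T^-f\,d\mu$ for $\mu\in\mcal N$ is true, but the ingredients you cite (Proposition \ref{few}, the Legendre formulas, and $\T(\mu,\mu)=0$) only give one inequality: both $T^-\circ T^+\circ T^-f=T^-f$ together with the Legendre formula for $T^-$, and the Legendre formula for $T^+$ with trial $\sigma=\mu$, yield $\int T^+\circ T^-f\,d\mu\le\int T^-f\,d\mu$. The reverse inequality is precisely what you need to bound $\Phi(\nu)$ from above, and it does not follow from those three facts alone; it actually requires either $\mcal N$-factorization (for $\mu\in\mcal N$ one has $\T(\tau,\mu)+\T(\mu,\sigma)\ge\T(\tau,\sigma)\ge\int f\,d\sigma-\int T^-f\,d\tau$, which rearranges to the missing inequality) or is itself equivalent to the Lipschitz bound you are trying to establish. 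The paper avoids this detour entirely with a two-line computation:
\[
\Phi(\nu)-\Phi(\mu)=\Bigl[\int T^-f\,d\nu-\int (T^-)^2f\,d\nu\Bigr]+\Bigl[\int f\,d\nu-\int T^-f\,d\mu\Bigr]\le\T(\nu,\nu)+\T(\mu,\nu),
\]
using idempotence of $T^-$ and the dual definition of $\T$, which immediately gives the Lipschitz bound when $\nu\in\mcal N$.
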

\prf{It suffices to apply the above theorem to the functional $\Phi (\mu)=\int_XT^-f\, d\mu$ for any fixed $f\in C(X)$. Note that 
\begin{eqnarray*}
\Phi(\nu)-\Phi (\mu)&=&\int_XT^-f\, d\nu-\int_XT^-f\, d\mu\\
&=&\int_XT^-f\, d\nu-\int_Xf\, d\nu+\int_Xf\, d\nu -\int_XT^-f\, d\mu\\
&=&\int_XT^-f\, d\nu-\int_X(T^-)^2f\, d\nu+\int_Xf\, d\nu -\int_XT^-f\, d\mu\\
&\leq&
\T(\nu, \nu)+\T(\mu, \nu),
\end{eqnarray*}
in such a way that if $\nu\in {\cal N}$, then $\Phi(\nu)-\Phi (\mu) \leq \T(\mu, \nu).$ The conclusion follows from Theorem \ref{lipschitz} since being idempotent, $\T$ is Null-factorizable and $c(\T)=0$. 
}

\section{Weak KAM operators associated to Kantorovich operators}

\defn{Let $T: C(X) \to USC(X)$ be a backward Kantorovich operator with a finite Mather  constant $c(T)$. We shall say that a Kantorovich operator $T_\infty: C(X) \to USC(X)$ is {\bf a backward weak KAM operator associated to $T$} if
\begin{enumerate}
\item $T_\infty$ is idempotent.

\item $TT_\infty =T_\infty T$.

\item $T_\infty$ maps $C(X)$ to the class of backward weak KAM solutions for $T$, i.e., for any $g\in C(X)$, 
\begin{equation}
TT_\infty g +c(T)=T_\infty g.
\end{equation}
\end{enumerate}
The linear transfer associated to $T_\infty$ is then
$$
\T_\infty(\mu, \nu)=\sup\limits_{g\in C(X)}\{\int_Xg\, d\nu-\int_X T_\infty g\, d\mu\},
$$
and will be called the {\bf Peirls barrier associated to $T$}.
}
\begin{theorem}\lbl{powerbd} 
Let 
$T$ be a power-bounded Kantorovich operator such that for any $f\in C(X)$, 
 \eq{\lbl{unif}
\omega_{_{Tf}} \leq \omega_f,
 }
 where 
 $\omega_h$ is the modulus of uniform continuity of a continuous function $h$. Then, there exists a backward weak KAM operator $T_\infty: C(X) \to C(X)$ associated to $T$. 
   \end{theorem}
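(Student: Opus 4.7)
The plan is to take $T_\infty$ to be the operator implicit in the proof of Theorem \ref{bdo}, now viewed at the level of $C(X)$:
\begin{equation*}
\bar{T}_\infty g \;:=\; \inf_{n}\sup_{m\geq n}(T^m g + m\,c(T)),\qquad T_\infty g \;:=\; \sup_{n}\bigl(T^n \bar{T}_\infty g + n\,c(T)\bigr),
\end{equation*}
and then to show that under the modulus-of-continuity hypothesis this operator stays inside $C(X)$, and that the three weak-KAM-operator axioms follow from direct manipulation of the definition.

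First, I would establish that $\{T^n g + n\,c(T)\}_n$ is uniformly bounded and equicontinuous in $C(X)$ for every $g\in C(X)$. Iterating $\omega_{Tf}\leq \omega_f$ gives $\omega_{T^n g}\leq \omega_g$ for all $n$, so in particular each $T^n g$ is continuous and its oscillation on $X$ is bounded by $\omega_g(\mathrm{diam}\,X)$. Power-boundedness provides a uniform upper bound for $\sup_X(T^n g + n\,c(T))$, while Lemma \ref{oscillation_limits} gives $\sup_X(T^n g + n\,c(T))\geq \inf_X g$, so equicontinuity forces the pointwise lower bound $T^n g + n\,c(T)\geq \inf_X g - \omega_g(\mathrm{diam}\,X)$. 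The envelopes $\sup_{m\geq n}(T^m g + m\,c(T))$ are then in $C(X)$ with modulus $\omega_g$, decrease to $\bar{T}_\infty g$, and by Dini's theorem on the compact space $X$ the convergence is uniform, so $\bar{T}_\infty g\in C(X)$. As in the proof of Theorem \ref{bdo}, $\bar{T}_\infty g$ is a supersolution, hence $T^n \bar{T}_\infty g + n\,c(T)$ is increasing, equicontinuous and uniformly bounded, and Dini again yields $T_\infty g\in C(X)$ as the uniform limit.

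Next I would verify the three defining axioms. The identity $T T_\infty g + c(T) = T_\infty g$ is exactly the conclusion produced at the end of the proof of Theorem \ref{bdo}, obtained by commuting $T$ through the increasing limit via its capacity property. From this, $T^n T_\infty g + n\,c(T) = T_\infty g$ for every $n$, so $\bar{T}_\infty(T_\infty g) = T_\infty g$ and consequently $T_\infty(T_\infty g) = T_\infty g$, which is idempotence. Commutation $T T_\infty = T_\infty T$ reduces, via the index shift $m\mapsto m+1$ and the affinity of $T$ on constants, to the identity $\bar{T}_\infty(Tg) = \bar{T}_\infty g - c(T)$, whence $T_\infty T g = T_\infty g - c(T) = T T_\infty g$.

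Finally I would check that $T_\infty$ is itself a Kantorovich operator from $C(X)$ to $C(X)$. Monotonicity and affinity on constants propagate trivially through $\sup/\inf$. Each $T^n$ is convex in $g$ (by iterating convexity and monotonicity of $T$), and convexity survives both $\sup$ and pointwise monotone limits, giving convexity of $T_\infty$. The Kantorovich lower-semi-continuity axiom follows from the contraction $\|Tg - Th\|_\infty \leq \|g-h\|_\infty$ of \eqref{lip1}, which iterates and propagates through the nested envelopes, so $T_\infty$ is $1$-Lipschitz on $C(X)$ in sup-norm and in particular continuous. The main obstacle I anticipate is the careful alignment of the capacity property of $T$ with the two nested limits in order to upgrade $T_\infty g$ from a supersolution to a genuine weak KAM solution; that step is already handled at the end of the proof of Theorem \ref{bdo} and, once everything is known to live in $C(X)$, transcribes verbatim.
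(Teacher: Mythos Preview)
Your proposal is correct and follows essentially the same route as the paper: define $\bar T_\infty g$ as the $\limsup$ of $T^n g + n\,c(T)$, use the equicontinuity hypothesis to upgrade it from $USC(X)$ to $C(X)$, then take the increasing limit of $T^n\bar T_\infty g + n\,c(T)$ and verify the weak-KAM-operator axioms via the same index shift $\bar T_\infty(Tg)=\bar T_\infty g - c(T)$ and the same idempotence argument. One small wording issue: your appeal to Dini is slightly circular as stated (Dini presupposes the limit is continuous), but the underlying point—that equicontinuity makes the pointwise $\limsup$ continuous—is exactly what the paper uses, so the argument stands.
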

\prf{As suggested by the proof of Theorem \ref{gen}, the weak KAM operator could be defined for any $g\in C(X)$ as 
\eqs{
T_\infty g := \lim_n \uparrow T^n\circ \bar T_\infty g= \lim_n \uparrow T^n\circ (\lim_m\downarrow S_mg)=\lim_n \uparrow T^n\circ (\lim_m\downarrow \overline{\sup_{k\geq m}(T^kg +k c(T)))},
} 
provided we can show that the operator considered in the proof of Theorem \ref{gen},  
$\bar T_\infty g:=\lim_m\downarrow \overline{\sup_{k\geq m}(T^kg +k c(T))}$ is in $C(X)$ as opposed to $USC (X)$. Actually, in this case both $\bar T_\infty g$ and $T_\infty g$ are in $C(X)$.  

Indeed, first we note that for any $g\in C(X)$, $\overline{T} g(x) := \limsup_{n \to \infty}(T^ng(x) +n c(T))$ is in $C(X)$ since  $T$ is power bounded and the sequence $(T^ng+n c(T))_n$ is equicontinuous by assumption (\ref{unif}). This implies that $\bar T_\infty g=\overline{T}g$.
Moreover, $\overline{T}$ is itself a backward Kantorovich operator since it inherits the monotonicity, convexity, and affinity with respect to constants, properties of $T$. The remaining property to check is the lower semi-continuity. For this, suppose $g_k \to g$ in $C(X)$ for the sup norm. We have by Theorem \ref{capacity} that 
$T^ng(x) \leq T^ng_k(x) + \|g-g_k\|_\infty$,
 so that 
$\overline{T}g(x) \leq \overline{T}g_k(x) + \|g-g_k\|_\infty$
and $\overline{T}g(x) \leq \liminf_{k \to \infty}\overline{T}g_k(x)$.

Since $T{\overline T}g +c(\T) \geq {\overline T}g$, we have $\{T_{n}\circ \overline{T}g + nc(\T)\}_{n \geq 1}$ is a monotone increasing sequence of continuous functions, with 
$|T_{n}\circ \overline{T}g(x) - T_{n}\circ \overline{T}g(x')| \leq \delta(d_X(x,x')).$
 Since $T$ is power bounded, we have 
$\|T_{n}\circ \overline{T}g(x) + nc(\T)\|_\infty \leq K <+\infty$,
hence the operator $T_\infty: C(X) \to C(X)$ defined now via
\eqs{
T_\infty g(x) := \lim_{n \to \infty}T^n\circ\overline{T}g(x) + nc(\T),
}
is the claimed backward weak KAM operator associated to $T$. Indeed, as noted in Theorem \ref{gen}, we have for all $g\in C(X)$, 
\eq{\lbl{notreverse}
T\circ T_\infty g+c(T)=T_\infty g.
}
 We can also consider the composition 
\eq{\lbl{reverse_weakkam}
T_\infty \circ Tg(x) = \lim_{n \to \infty}( T^n \circ \overline{T}\circ T g(x) + nc(\T)),
}
and notice that
\as{\lbl{reverse}
\overline{T} \circ T g(x) = \limsup_{n \to \infty} (T^n \circ T g(x) + nc(\T))
= \limsup_{n \to \infty} (T_{n+1}g(x) + (n+1)c(\T)) - c(\T)
= \overline{T}g(x) - c(\T),
}
which means from \refn{reverse_weakkam} that
\eq{\lbl{reverse}
T_\infty\circ T g + c(\T) = T_\infty g.
}
We claim that $T_\infty$ is a backward Kantorovich operator. Indeed, since $T^n$ and $\overline{T}$ are backward Kantorovich operators, the monotonicity, convexity, and affine on constants properties hold in the pointwise limit $n \to \infty$, and therefore hold for $T_\infty$. Regarding lower semi-continuity,  we can repeat a similar argument as was given for $\overline{T}$ to deduce that 
\eq{\lbl{lower_semi_continuity_operator}
T^n\circ \overline{T} g(x) \leq T^n\circ \overline{T} g_k + \|g-g_k\|_\infty,
}
which gives
$T_\infty g(x) \leq \liminf_{k \to \infty}T_\infty g_k(x).
$

To show that $T_\infty$ is idempotent, use that $T^n\circ T_\infty g + nc(\T) = T_\infty g$ to obtain $\overline{T}\circ T_\infty g = T_\infty g$ and therefore  $T_\infty\circ T_\infty g = T_\infty g$.
Finally note that (\ref{reverse}) and (\ref{notreverse}) give that $T_\infty T=TT_\infty$, which completes the proof that $T_\infty$ is a backward weak KAM operator. 
}
\begin{remark} Another process for obtaining a weak KAM operator is to consider the Kantorovich operator $Ug=g\vee (Tg+c)$. Note that $(U^ng)_n$ is increasing to $U_\infty g$, that $TU_\infty g +c(T) \geq U_\infty g$ and therefore,
\eqs{
T_\infty g := \lim_n \uparrow T^n\circ U_\infty g= \lim_n \uparrow T^n\circ  (\lim_m \uparrow U^mg +nc(T)).
}
\end{remark}
\begin{remark}
Property (\ref{unif}) appears for example in the case where $X$ is the closure of an open bounded domain $O$ in $\C^n$ (resp., $\R^n$), and the Kantorovich operators are of the form 
 \[
Tf(x):= \sup_{v\in \R^n} \bigg\{ \int ^{2\pi}_0 f(x +
 e^{i\theta}v) {d\theta \over 2\pi};\,  x + \bar \Delta v\subset O \bigg\}, 
 \]
 (resp.,  
 \[
Tf(x) =\sup_{r \geq 0} \bigg\{ \int_{B} f (x + r y) \,dm(y);\,  x + r \overline{B}\} \subset O \bigg\}),
\]
where $\Delta = \{ z \in {\mathbb C}, \vert z\vert < 1\}$ is the open unit disc, 
$B$ is the open unit ball in $\R^n$ centered at $0$, and $m$ is normalized Lebesgue measure on $\R^n$. Condition (\ref{unif}) is verified in either case.

It is also satisfied by the ``Sinkhorn" operators, 
\eq{
T_\nu g(x)=\epsilon \log \int_{Y}e^{\frac{g(y)-c(x, y)}{\epsilon}}d\nu (y),
} 
where $\nu \in \P(X)$, which arises from an entropic regularization of the above optimal mass transport with cost $c$, as well as the composition $T_\nu\circ T_\mu$, where $\mu$ is another probability in $\P(X)$. We then have for $\kappa (\nu) <1$, 
\eq{
\|T_\nu g_1-T_\nu g_2\|_{o, \infty}\leq \kappa(\nu) \|g_1-g_2\|_{0, \infty}
}
and
\eq{
\|T_\mu\circ T_\nu g_1-T_\nu \circ T_\mu g_2\|_{o, \infty}\leq \kappa(\mu)\kappa(\nu) \|g_1-g_2\|_{0, \infty}
}
where $g_{o, \infty}:=\frac{1}{2}(\sup g-\inf g).$

We will now see that (\ref{unif})  also holds when the linear transfer associated to $T$ is weak$^*$-continuous on $\P(X)\times \P(X)$. 
\end{remark}


\thm{ \label{weakKAMthm_discrete_time} Let $\T$ be a weak$^*$-continuous backward linear transfer on $\mcal{M}(X)\times \mcal{M}(X)$ 
with backward Kantorovich operator $T$, and a Mather constant $c(\T)$. 
Then, there exists a backward weak KAM operator $T^-_\infty: C(X) \to C(X)$
with a corresponding backward linear transfer $\T_\infty$ satisfying the following: 
\begin{enumerate}
\item 
$(\T_n - nc(\T)) \star \T_\infty = \T_\infty  =   \T_\infty\star(\T_n - nc(\T)) \quad \text{for every $n \in \N$}.
$
\item For every $\mu,\nu \in \mcal{P}(X)$, we have 
\begin{equation*}
\sup\lf\{\int_X T^-_\infty g d (\nu-\mu)\,;\, g \in C(X)\rt\} \leq\T_\infty(\mu,\nu) \leq \liminf_{n\to\infty}(\T_n(\mu,\nu)- nc(\T)).
\end{equation*}
\item  
The set ${\cal A}:=\{\sigma \in {\cal P}(X); {\cal T}_\infty(\sigma, \sigma)=0\}$ contains all minimal measures of $\T$. Moreover, $\mu$ is minimal for $\T$ if and only if $(\mu, \mu)$ belongs to the set 
\eqs{\mathcal{D}:= \{ (\mu,\nu) \in \mathcal{P}(X)\times\mathcal{P}(X);\, \T(\mu,\nu) + \T_\infty(\nu,\mu) = c(\T)\}.
}
\item  If $\T$ is also a forward linear transfer, then there exists conjugate functions $\psi_0, \psi_1$ for $\T_\infty$ in the sense that 
$\psi_0=T_\infty^-\psi_1$ and $\psi_1=T_\infty^+\psi_0$,
 such that 
\eq{
T^-\psi_0+c=\psi_0, \quad T^+\psi_1-c=\psi_1,
}
and 
\eq{
\int_X \psi_0d\mu=\int_X \psi_1d\mu \hbox{\,  for every  $\mu\in \cal A$}.
}
 \end{enumerate}
}
\prf{We shall show that in this case $\T$ has bounded oscillation and that $T$ satisfies condition (\ref{unif}). 
For that we recall that since $X$ is compact with a metric $d_X$, the \textit{quadratic Wasserstein distance}, denoted by $W_2(\mu,\nu) := \sqrt{\T_2(\mu,\nu)}$ where $\T_2$ is optimal transport with quadratic cost $c(x,x') := d_X(x,x')^2$, metrizes the topology of weak$^*$ convergence on $\mcal{P}(X)$. Therefore, if $\T$ is weak$^*$ continuous on $\mcal{P}(X)\times \mcal{P}(X)$, then it is weak$^*$ uniformly continuous 
and there exists a \textit{modulus of continuity} $\delta: [0,\infty) \to [0,\infty)$, $\delta(0) = 0$, such that 
\eqs{
|\T(\mu,\nu) - \T(\mu',\nu')| \leq \delta(W_2(\mu,\mu') + W_2(\nu,\nu')) \hbox{ for all $\mu,\mu',\nu,\nu' \in \mcal{P}(X)$}.
}
We need the following lemmas. The first is an adaptation of an argument by Bernard-Buffoni \cite{BB2}.
}
\lma{\label{estimation}
Let $\T:\mcal{P}(X)\times \mcal{P}(X) \to \R$ be a weak$^*$ continuous functional with a modulus of continuity $\delta$. Then, the sequence $\{\T_n\}_{n \in \N}$ is equicontinuous with the same modulus of continuity $\delta$, and $\T$ has bounded oscillation, i.e., there exists a positive constant $C > 0$ such that

\eqs{
\lf|\T_{n}(\mu,\nu) - nc(\T)\rt| \leq C,\quad \text{for every $n \geq 1$ and all $\mu,\nu \in \mcal{P}(X)$.}
}
}
\prf{
Define $M_n := \max_{\mu,\nu}\T_n(\mu,\nu)$ and $M := \inf_{n\geq 1}\{\frac{M_n}{n}\} > -\infty$. The sequence $\{M_n\}_{n \geq 1}$ is subadditive, that is $M_{n+m} \leq M_n +M_m$, hence $\{\frac{M_n}{n}\}_{n \geq 1}$ decreases to its infimum $M$ as $n \to \infty$. 
On the other hand, the sequence $m_n := \min_{\mu,\nu}\T_n(\mu,\nu)$ is superadditive, hence  $\lim_{n \to \infty}\frac{m_n}{n} = m$, where $m := \sup_{n}\frac{m_n}{n}$.
  We now show that $m = M$. 
  
  Note that all $\T_n$ have the same modulus of continuity; this follows because for each choice of $(\sigma_1,\ldots,\sigma_n) \in \mcal{P}(X)\times\ldots\times\mcal{P}(X)$, the functional 
  $$(\mu,\nu) \mapsto \T(\mu, \sigma_1) + \T(\sigma_1,\sigma_2) + \ldots + \T(\sigma_n, \nu)$$ has the same modulus of continuity $\delta$, and so the infimum (i.e. the function $\T_n$) also has modulus of continuity $\delta$. 
  This then implies the existence of a constant $C > 0$, such that $M_n - m_n \leq C$ for every $n \geq 1$. Then, we obtain the string of inequalities
\eqs{
nM - C \leq M_n - C \leq m_n \leq \T_n(\mu,\nu) \leq M_n \leq m_n + C \leq nm + C.
}
The left-most and right-most inequalities imply $M \leq m$ upon sending $n \to \infty$, hence $m  = M$.
}
\lma{\label{Kant} Let $\T:\mcal{P}(X)\times \mcal{P}(X) \to \R$ be a weak$^*$ continuous backward linear transfer with a modulus of continuity $\delta$, and let $T$ be the corresponding Kantorovich operator. Then, 
\begin{enumerate}
\item $T$ is power bounded.
\item The semi-group of operators $\{T^n\}_{n \geq 1}$ has the same modulus of continuity as $\T$.
\item The constant $c(\T)$ is critical in the sense that for any $g \in C(X)$, $T^ng + kn \to \pm \infty$ as $n \to \infty$ if $k \neq c(\T)$, depending on whether $k < c(\T)$ or $k > c(\T)$, .
\end{enumerate}
}

\prf{ 1) follows from the fact that $\T$ has bounded oscillation and Lemma \ref{when.bounded}.

For 2) note that 
\begin{align*}
T^n g(x) &= \sup_{\sigma}\{\int g d\sigma - \T_n(\delta_x, \sigma)\} \\
&\leq \sup_{\sigma}\{\int g d\sigma - \T_n(\delta_y, \sigma)\} + \sup_{\sigma}\{\T_n(\delta_y, \sigma) - \T_n(\delta_x, \sigma)\}\\
&= T^n g(y) + \delta(d(x,y)).
\end{align*}
We then interchange $x$ and $y$ to obtain the reverse inequality.

3) follows from 1) since 
$C + (k-c(T))n \leq T^n g(x) + kn \leq C + (k-c(T))n.
$
}
\noindent{\bf Proof of Theorem \ref{weakKAMthm_discrete_time}:} 
 First note that Lemma \ref{Kant} shows that the hypothesis of Theorem \ref{powerbd} are satisfied, hence there exists a Kantorovich operator $T_\infty$ associated with $T$. We consider the corresponding Peirls barrier, 
\eqs{
\T_\infty(\mu,\nu) := \sup_{g \in C(X)}\lf\{\int_{X} gd\nu - \int_{X} T_\infty gd\mu \rt\}
}
which is a backward linear transfer. Since $T^n\circ T_\infty g + nc(\T) = T_\infty g$ and $T_\infty\circ T_n g + nc(\T) = T_\infty g$, we obtain 
$\T_\infty = (\T_n - nc(\T))\star\T_\infty = \T_\infty\star(\T_n - nc(\T))$ for all $n \geq  1$.

$2)$  Recall that $T_\infty g(x) \geq \limsup_{n \to \infty}(T^ng(x) + nc(\T))$, so that
\begin{align*}
\int_{X} T_\infty g d\mu \geq \int_{X}\limsup_{n \to \infty}(T^ng(x) +n c(\T))d\mu
\geq \limsup_{n \to \infty}\int_{X}(T^ng(x) + nc(\T))d\mu.
\end{align*}
Hence
\begin{align*}
\T_\infty(\mu,\nu) 
&\leq \sup\limits_{g \in C(X)}\liminf_{n \to \infty}\left\{\int_{X}gd\nu - \int_{X}T^n gd\mu - nc(\T)\right\}\\
&\leq \liminf_{n \to \infty}\sup\limits_{g \in C(X)}\left\{\int_{X}gd\nu - \int_{X}T^n gd\mu - nc(\T)\right\}\\
&= \liminf_{n \to \infty}(\T_n(\mu,\nu)- nc(\T)).
\end{align*}
On the other hand, since $T_\infty$ is idempotent, 
\begin{align*}
\T_\infty(\mu,\nu) = \sup\lf\{\int_{X}gd\nu - \int_{X}T_\infty gd\mu\,;\, g \in C(X)\rt\}
\geq \sup\lf\{\int_{X}T_\infty gd(\nu-\mu)\,;\, g \in C(X)\rt\}.
\end{align*}

$3)$   Suppose $\mu$ is a minimal measure. i.e., $c(\T) = \T(\mu,\mu)$, then   \as{
0 \leq \T_\infty(\mu,\mu) \leq \liminf_{n \to \infty}(\T_n(\mu,\mu)-nc(\T))
\leq \liminf_{n \to \infty}(n\T(\mu,\mu)-nc(\T)) = 0,
}
so $\mu \in \mcal{A}$.  If $(\mu,\mu) \in \mcal{D}$, then clearly $\mu \in \mcal{A}$ and $\T(\mu,\mu) = c(\T)$. The converse is also clear.

$4)$ is an immediate application of Theorem \ref{lipschitz}. 

\begin{corollary}
Let $\T$ be an optimal mass transport with a continuous cost $A(x, y)$ on $X\times X$, that is 
$
\T(\mu, \nu)=\inf\{\int_{X\times X} A(x, y) \, d\pi; \pi \in \mcal K(\mu, \nu)\}. 
$
Then, 
\begin{enumerate}
\item The associated Mather constant is given by,
\begin{equation}\label{mather1001}
c(\T) = \min \{ \int_{X \times X}A(x,y)d \pi ; \pi \in {\mathcal P} (X\times X), \pi_1=\pi_2\}.
\end{equation}
\item For each $n\in \N$, $\T_n$ is the optimal mass transport associated with the cost $$A_n(x, y)=\inf\{A(x, x_1)+A(x_1, x_2)+\, ..., A(x_{n-1}, y); x_1,..., x_{n-1} \in X\}.$$ 
\item The corresponding Peirls barrier is given by 
$$\T_\infty (\mu, \nu)=\T_{A_\infty} (\mu, \nu):=\inf\{\int_{X\times X}A_\infty(x,y)d\pi(x,y)\,;\, \pi \in \mcal{K}(\mu,\nu)\},$$
and the associated weak KAM operators are 
$$T^-_\infty f(x) = \sup\{f(y) - A_\infty (x,y)\,;\, y \in X\}\quad \text{and }\quad T^+_\infty f(y) = \inf\{f(x) + A_\infty (x,y)\,;\, x \in X\},$$
where $A_\infty(x,y) := \liminf\limits_{n \to \infty}(A_n(x,y) - c(\T)n)$ is a continuous cost on $X\times X.$
\item The set ${\mathcal A}:=\{\sigma \in {\mathcal P}(X); {\T}_\infty (\sigma, \sigma)=0\}$ consists of those $\sigma \in {\mathcal P}(X)$ supported on the set $D_\infty=\{x\in X; A_\infty (x, x)=0\}$. 

\item  The minimizing measures in (\ref{mather1001}) are all supported on the set 
$$D:= \{ (x,y) \in X \times X\,;\, A(x,y) + A_\infty(y,x) = c(\T)\}.$$

\item  There exists conjugate functions $\psi_0, \psi_1$ for $\T_\infty$ in the sense that $
T^-_\infty \psi_1=\psi_0$ and $ T^+_\infty \psi_0=\psi_1$,
which are also weak KAM solutions for $\T$, i.e.,
\eq{
T^-\psi_0+c=\psi_0, \quad  T^+\psi_1-c=\psi_1,
}
and such that 
\eq{
\psi_0(x)=\psi_1(x) \hbox{\,  {\rm whenever}  $A_\infty(x, x)=0$}.
}

\end{enumerate}
\end{corollary}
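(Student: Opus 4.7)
The plan is to unpack Theorem \ref{weakKAMthm_discrete_time} in the concrete optimal-transport setting, item by item, verifying that each abstract object ($c(\T)$, $\T_n$, $\T_\infty$, $T_\infty^\pm$, $\mcal{A}$, $\mcal{D}$, $\psi_0,\psi_1$) admits the stated explicit OT form. The first observation is that $\T = \T_A$ is both a backward and forward linear transfer, with Kantorovich operators $T^-g(x) = \sup_y\{g(y)-A(x,y)\}$ and $T^+f(y) = \inf_x\{f(x) + A(x,y)\}$. Continuity of $A$ on the compact $X\times X$ yields a uniform modulus $\omega_A$, and via Wasserstein metrization one checks that $\T$ is weak$^*$-uniformly continuous on $\P(X)\times\P(X)$ with modulus derived from $\omega_A$. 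Hence both Theorem \ref{weakKAMthm_discrete_time} and the equicontinuity estimate of Lemma \ref{estimation} apply.

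Items (1) and (2) are direct. For (1) I rewrite $c(\T) = \inf_\mu \T(\mu,\mu) = \inf_\mu \inf_{\pi \in \mcal{K}(\mu,\mu)} \int A\, d\pi$ as a single infimum over couplings $\pi$ on $X \times X$ with equal marginals; the minimum is attained because $\{\pi : \pi_1 = \pi_2\}$ is weak$^*$-compact and $\pi \mapsto \int A\, d\pi$ is continuous. Item (2) follows by induction from Proposition \ref{conv}: the iterated inf-convolution over intermediate measures $\sigma_1,\ldots,\sigma_{n-1}$, combined with the standard gluing lemma for couplings, reduces to OT on $X \times X$ with the concatenated cost $A_n$.

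For item (3), Lemma \ref{estimation} shows that $\{A_n - nc(\T)\}_n$ is uniformly bounded and equicontinuous (both inherited from $A_n(x,y) = \T_n(\delta_x, \delta_y)$ and the shared modulus of the family $\{\T_n\}$), so $A_\infty = \liminf_n(A_n - nc(\T))$ is a bounded continuous function on $X\times X$. The identification $\T_\infty = \T_{A_\infty}$ then comes from the sandwich in Theorem \ref{weakKAMthm_discrete_time}(2): the upper bound $\T_\infty \leq \liminf_n(\T_n - nc(\T))$ matches $\T_{A_\infty}$ via a Fatou-type argument on OT with continuous costs, while the lower bound $\sup_{g \in C(X)}\int T_\infty^- g\, d(\nu-\mu) \geq \T_{A_\infty}(\mu,\nu)$ matches by Kantorovich duality applied to the continuous cost $A_\infty$. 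Once this identification is in hand, the explicit sup/inf formulas for $T_\infty^\pm$ are the standard Kantorovich operators for cost $A_\infty$.

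Items (4)--(6) translate Theorem \ref{weakKAMthm_discrete_time}(3)--(4) into OT language. For (4), the easy inclusion $\{\sigma : \mathrm{supp}(\sigma) \subseteq D_\infty\} \subseteq \mcal{A}$ comes from testing $\T_\infty(\sigma,\sigma)$ against the diagonal coupling, giving $\T_\infty(\sigma,\sigma) \leq \int A_\infty(x,x)\, d\sigma = 0$, combined with $\T_\infty(\sigma,\sigma) \geq c(\T_\infty) = 0$ from Proposition \ref{A_factorisable}. Item (5) rewrites $\mcal{D}$ using that an optimal coupling for $\T(\mu,\mu)$ must be supported where $A(x,y) + A_\infty(y,x) = c(\T)$. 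Item (6) is a direct instantiation of Theorem \ref{weakKAMthm_discrete_time}(4) applied to $\Phi(\mu) := \int_X T^-f\, d\mu$, yielding the conjugate pair $\psi_0 := T_\infty^- \bar f$ and $\psi_1 := T_\infty^+ \circ T_\infty^- \bar f$. The main obstacle will be the reverse inclusion $\mcal{A} \subseteq \{\sigma : \mathrm{supp}(\sigma) \subseteq D_\infty\}$: the key ingredients are the triangle inequality $A_\infty(x,z) \leq A_\infty(x,y) + A_\infty(y,z)$ coming from idempotency $A_\infty = A_\infty \star A_\infty$, and the non-negativity $A_\infty(x,x) \geq 0$ (taking $y = x$ in $A_\infty(x,x) = \inf_y[A_\infty(x,y)+A_\infty(y,x)]$); combining these with the optimality $\int A_\infty\, d\pi^* = 0$ of a coupling $\pi^* \in \mcal{K}(\sigma,\sigma)$ and iterating through Markov-chain concatenations of $\pi^*$ should force $\int A_\infty(x,x)\, d\sigma = 0$, hence $\mathrm{supp}(\sigma) \subseteq D_\infty$ by continuity of $A_\infty$.
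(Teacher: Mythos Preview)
Your plan is sound for items (1), (2), (5) and (6), and your opening observation that continuity of $A$ makes $\T=\T_A$ weak$^*$-uniformly continuous, so that Theorem~\ref{weakKAMthm_discrete_time} and Lemma~\ref{estimation} apply, is exactly the right entry point. The difficulty is your treatment of item~(3), where the sandwich strategy does not close.

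The inequality in Theorem~\ref{weakKAMthm_discrete_time}(2) reads
\[
\sup_{g\in C(X)}\int_X T_\infty^-g\,d(\nu-\mu)\;\le\;\T_\infty(\mu,\nu)\;\le\;\liminf_{n}\bigl(\T_n(\mu,\nu)-nc\bigr),
\]
and you propose to identify both ends with $\T_{A_\infty}$. For the upper end, a Fatou argument goes the \emph{wrong} way: for any fixed $\pi\in\mcal K(\mu,\nu)$ one has $\T_n-nc\le\int(A_n-nc)\,d\pi$, hence $\liminf_n(\T_n-nc)\le\liminf_n\int(A_n-nc)\,d\pi$, but Fatou only gives $\int A_\infty\,d\pi\le\liminf_n\int(A_n-nc)\,d\pi$, which does not bound $\liminf_n(\T_n-nc)$ above by $\int A_\infty\,d\pi$. (Passing instead to a uniformly convergent subsequence via Arzel\`a--Ascoli actually yields the opposite inequality $\liminf_n(\T_n-nc)\ge\T_{A_\infty}$.) For the lower end, the quantity $\sup_g\int T_\infty^-g\,d(\nu-\mu)$ involves $T_\infty^-$ itself, which you have not yet identified; invoking Kantorovich duality for the cost $A_\infty$ here presupposes $T_\infty^-=T_{A_\infty}^-$, which is precisely the conclusion you are after. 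So neither side of the sandwich pins $\T_\infty$ to $\T_{A_\infty}$.

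The paper bypasses this by working directly at the operator level. It first shows $\overline T^-g(x):=\limsup_n\bigl(T_n^-g(x)+nc\bigr)=T_{A_\infty}^-g(x)$: the inequality $\ge$ is immediate by fixing $y$, while for $\le$ one picks maximisers $y_n$ in $T_n^-g(x)=\sup_y\{g(y)-A_n(x,y)\}$, extracts a subsequence $n_j$ realising the $\limsup$ with $y_{n_j}\to\bar y$, and uses the equicontinuity of $\{A_n\}$ to obtain
\[
\limsup_n\bigl(T_n^-g(x)+nc\bigr)=g(\bar y)-\liminf_j\bigl(A_{n_j}(x,\bar y)-n_jc\bigr)\le g(\bar y)-A_\infty(x,\bar y)\le T_{A_\infty}^-g(x).
\]
Then one checks the semigroup relation $(A_m-mc)\star A_\infty=A_\infty$, which gives $T^m\circ T_{A_\infty}^-g+mc=T_{A_\infty}^-g$ for every $m$, and hence $T_\infty^-g=\lim_m\bigl(T^m\circ\overline T^-g+mc\bigr)=T_{A_\infty}^-g$. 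The identification $\T_\infty=\T_{A_\infty}$ is then immediate from the definition of $\T_\infty$ as the transfer with Kantorovich operator $T_\infty^-$. This operator-level argument is what you should substitute for the sandwich.

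A smaller remark on item~(4): your Markov-chain concatenation idea for the reverse inclusion is plausible but, as stated, does not obviously force $\int A_\infty(x,x)\,d\sigma=0$; iterating the optimal coupling $n$ times gives $\int A_\infty\,d\pi^{(n)}=0$ for the law $\pi^{(n)}$ of $(X_0,X_n)$, but there is no reason $\pi^{(n)}$ should approach the diagonal coupling. You will need an additional argument (e.g.\ exploiting the factorisation of Proposition~\ref{A_factorisable} through $\mcal N$, or the conjugate pair from item~(6)) to conclude that $\sigma$ is carried by $D_\infty$.
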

\prf{ $1)$ If $\T$ is an optimal mass transport with cost $A$, then (\ref{mather1001}) follows immediately from the fact that $c(\T)=\inf_{\mu \in \P(X)}\T(\mu, \mu)$. 

$2)$ and $3)$: It is easy to see that its $n$-th convolution $\T_n$ and the optimal mass transport with cost $A_n$ have the same backward Kantorovich operator given by  $T_n^- g(x) = \sup\{ g(y) - A_n(x,y)\,;\, y \in X\}$. Hence, they are identical. We now show that 
\eq{\lbl{first}
\overline{T}^- g(x) := \limsup_{n \to \infty}(T_n^- g(x)+c(\T)n)=T_{A_\infty}^-g(x) := \sup_{y \in X}\{g(y) - A_\infty(x,y)\},
}
the latter being 
 the backward Kantorovich operator for the optimal transport with cost $A_\infty$.
 Indeed, first note that
\begin{equation}\label{masstransport1}
\limsup_{n}(T_n^- g(x)+c(\T)n) \geq \sup_{y \in X}\{g(y) - A_\infty(x,y)\}= T_{A_\infty}^-g(x).
\end{equation}
On the other hand, let $y_n$ achieve the supremum for $T^n g(x) = \sup\{g(y) - A_n(x,y)\,;\, y \in X\}$, and let $(n_j)_j$ be a subsequence   such that $\lim_{j \to \infty}(T_{n_j}^-g(x) + c(\T)n_j) = \limsup_{n}(T_n f(x) + c(\T)n)$. By refining to a further subsequence, we may assume by compactness of $X$, that $y_{n_j} \to \bar{y}$ as $j \to \infty$. From the equi-continuity of the $A_n$'s, we deduce that
\begin{equation}
\limsup_{n}(T^n g(x)+c(\T)n)  = \lim_{j \to \infty}(T_{n_j}^-g(x)+c(\T)n_j) = g(\bar{y}) - \liminf_{j}(A_{n_j}(x, \bar{y})-c(\T)n_j).
\end{equation}
As $\liminf_{j}(A_{n_j}(x, \bar{y})-c(\T) n_j) \geq \liminf_{n}(A_n (x,\bar{y})-c(\T) n) = A_\infty(x, \bar{y})$, we obtain
\begin{equation}\label{masstransport2}
\limsup_{n}(T^n g(x) + nc(\T)) \leq g(\bar{y}) - A_\infty(x, \bar{y}) \leq \sup_{y}\{g(y) - A_\infty(x,y)\}=T_{A_\infty}^-g(x).
\end{equation}
Inequality (\ref{masstransport2}) holds for every subsequence $(n_k)_k$ going to $\infty$, hence 
$\limsup_{n}(T_n^- g(x) + c(\T)n) \leq T_{A_\infty}^-g(x)$. By combining this with (\ref{masstransport1}) we get the equality (\ref{first}).  

Now note that $T_m^-(\limsup_{n}(T_n^- g + c(\T)n))(x) + c(\T)m = T_m^- T_{A_\infty}^- g(x) + c(\T)m = T_{A_\infty}^-g(x)$ thanks to the fact that $A_m\star A_\infty = A_\infty$. Recalling that the weak KAM operator is $T^-_\infty g=\lim_mT^m\circ {\overline T}^-g$, we conclude 
that $T^-_\infty g(x) = T_{A_\infty}^-g(x)$.

$4)$  Note that since $T_\infty$ is idempotent, then 
\[
c(\T_\infty)=\inf_{\sigma \in \P(X)}\T_\infty(\sigma, \sigma)=\inf\{\int_{X\times X} A_\infty (x, y)\, d\pi; \pi\in \P(X\times X), \pi_1=\pi_2\}=0. 
\] 
If $\mu$ is not supported on the set $\{x\in X; A_\infty(x, x)=0\}$, then if $A_x=\{y\in X; A_\infty (x, y)>0\}$, we have $\mu (\{x\in X; \mu(A_x)>0\})>0$, which readily implies that 
$\T_\infty(\mu, \mu)= \int_{X\times X} A_\infty (x, y)\, d\mu \otimes \mu>0$.

$5)$ Since $T_\infty$ is idempotent, $\int_{X\times X} A_\infty (x, y)\, d\pi \geq 0$ for all  
$\pi\in \P(X\times X)$, with $\pi_1=\pi_2$. Hence, if such an $\eta$ is supported on $D$, then 
\[
\int_{X\times X} A (x, y)\, d\eta=-\int_{X\times X} A_\infty (x, y)\, d\eta +c(\T)\leq c(\T),
\]
which means that $\int_{X\times X} A (x, y)\, d\eta=c(\T)$ and $\eta$ minimizes  (\ref{mather1001}).

Conversely, a result of Bernard-Buffoni \cite{BB2} yields that if 
$\eta$ is such that $\eta_1=\eta_2$ minimizes (\ref{mather1001}), then it is supported on the set $D=\{(x, y); A(x, y)+A_\infty (y, x)=0\}$.

$6)$ follows from Theorem \ref{lipschitz}. 
}

\ex{[Iterates of power costs]  Let $X \subset \R^n$ be compact and convex, and $c(x,y) := |x-y|^p$ for $p > 0$ and $x,y \in X$. Let $\T_c$ denote the optimal transport with cost $c$, with corresponding backward Kantorovich operator $T_c^- g(x) = \sup_{y \in X}\{g(y) - c(x,y)\}$.

If $0 < p \leq 1$, then $c$ satisfies the reverse triangle inequality $c(x,z) + c(z,y) \geq c(x,y)$, hence $c \star c (x,y) = \inf\{ |x-z|^p + |z-y|^p\,;\, z \in X\} = c(x,y)$. Therefore $T_\infty g(x) = T_c g(x)$ (i.e. $T_c^-$ is itself idempotent).

If $p > 1$, then $c \star c (x,y) = \inf\{ |x-z|^p + |z-y|^p\,;\, z \in X\}$ is minimised at some point $z = (1-\lambda)x + \lambda y$ on the line between $x$ and $y$, so that $c_p \star c_p(x,y) = \left(\lambda^p + (1-\lambda)^p\right)|x-y|^p$. The optimal $\lambda$ is $\frac{1}{2}$. Hence, 
$$(T_c^-)^n g(x) =\sup_{y \in X}\{ g(y) - \frac{1}{n^{p-1}}|x-y|^p\}.$$ 
Therefore, when $n \to \infty$, $(T_c^-)^n g(x) \to \sup_{x \in X}g(x)$, and it follows that $T_\infty g (x) := \sup_{x \in X}g(x)$, with the corresponding backward linear transfer $\T_\infty$ being the null transfer. 
}
\prop{\label{distance} If $\T$ is a distance-like regular backward linear transfer (i.e., $\T \leq \T\star\T$), then $c(\T)\geq 0$. If $c(\T)=0$, then the corresponding Kantorovich operator $T$ has a weak KAM operator given for every $g\in C(X)$ by 
\eq{T_\infty g:=\lim_n \downarrow T^ng.
}
}
\prf{Since $\T\leq \T\star \T$, we have  $\T (\mu, \nu)\leq \T_n(\mu, \nu)$ for all $(\mu, \nu) \in \P(X)\times \P(X)$, hence
\[
0=\lim_{n\to \infty}\frac{\inf_{\mu, \nu}\T (\mu, \nu)}{n} \leq \lim_{n\to +\infty} \frac{\inf_{\mu, \nu}\T_n(\mu, \nu)}{n}=c(\T).
\]
We also have $T^2 \leq T$, so by assuming $c(\T)=0$, we get that for any $g\in C(X)$, $(T^ng)_n$ is a decreasing sequence and Lemma \ref{oscillation_limits} gives that for every $n$, 
 $\sup_{x \in X} T^n g(x) \geq \inf_{y\in X}g(y)$.
Since $T$ is a capacity, it follows that $T_\infty g=\lim_n\downarrow T^ng$ is a backward weak KAM operator for $T$. 
}
\thm{\label{inf=c} Let $\T$ be a backward linear transfer such that 
$c(\T)=\inf\limits_{\P(X)\times \P(X)}\T$. Then the corresponding Kantorovich operator $T$ has a weak KAM operator given for every $g\in C(X)$ by 
\eq{T_\infty g=\lim_n\downarrow T^n\hat g+nc(\T),}
 where  
 \eq{\lbl{A}
\hat g (x):=\inf \{ h(x); h\in -\A, h\geq g\}\,\,\hbox{ and \,\, $\A:=\{h\in LSC(X);  T_r(-h)\leq -h\},$ }
}
$T_r$ being the positively $1$-homogenous recession operator associated with $T$, 
$$T_rg(x)=\lim\limits_{\lambda \to +\infty}\frac{T (\lambda f)(x)}{\lambda}=\sup\limits_{\sigma \in \P(X)}\{\int_Xg\, d\sigma; (\delta_x, \sigma)\in D(\T)\}.$$
}

\prf{Following \cite{G}, we note that if $k:=\inf \T$, then $\T-k \geq 0$ and therefore 
\as{
Tg(x)+k&= \sup\limits_{\sigma \in \P(X)}\{\int_Xg\, d\sigma -\T(\delta_x, \sigma)+k\}\\
&\leq \sup\limits_{\sigma \in \P(X)}\{\int_Xg\, d\sigma; \T(\delta_x, \sigma)<+\infty\}\\
&=\sup\limits_{\sigma \in \P(X)}\{\int_Xg\, d\sigma; (\delta_x, \sigma)\in D(\T)\}\\
&=T_rf(x).
}

Consider now the closed convex cone $\A$ defined in (\ref{A}), and 
note that $\A$ contains the constants and is stable under finite maximum. Moreover,  
\[
Tg(x)+k\leq T_rg(x) 
\leq \inf \{ h(x); h\in -\A, h\geq g\}:=\hat g(x).
\]
Indeed, if $\phi \in -\A$ and $\phi\geq g$, then 
\[
Tg(x)+k\leq T\phi (x)+k\leq  
T_r\phi (x) \leq \phi (x),
\]
hence $Tg(x)+k\leq \phi (x)$ from which follows that $Tg+k\leq \hat g$. Since $g\to \hat g$ is clearly idempotent, we get that $T\hat g +k\leq \hat g$.

Assuming now $k=c(T)$, then  $T_\infty g:=\lim_n\downarrow T^n\hat g +nc \leq \hat g$ is  the claimed weak KAM operator associated to $T$. Note that by Lemma \ref{weak_KAM_for_monotone_decreasing}, $T_\infty g$ is proper and hence is in $USC(X)$. 
}

\defn{\lbl{transfer.sets_defn} 1) A subset ${\mathcal S}$ of ${\mathcal P}(X)\times {\mathcal P}(Y)$ is said to be a {\em backward transfer set} if its characteristic function 
\begin{equation}
\T(\mu,\nu) := \begin{cases} 0 & \text{if } (\mu,\nu) \in \S,\\
+\infty & \text{otherwise},
\end{cases}
\end{equation}
is a backward linear transfer. The corresponding Kantorovich operator is then the  positively $1$-homogenous map $Tf(x)=\sup\{\int_Xf\, d\sigma; (\delta_x, \sigma)\in \S\}$. 

2) ${\mcal S}$ is said to be {\em transitive} if 
 \begin{equation}
 (\mu, \sigma)\in \S \,\, {\rm and}\,\, (\sigma, \nu)\in \S \Rightarrow (\mu, \nu)\in \S.
 \end{equation}
 }

\begin{corollary} Let $T$ be a positively $1$-homogenous Kantorovich operator and let $\S$ be the corresponding transfer set in ${\mathcal P}(X)\times {\mathcal P}(Y)$.\begin{enumerate}
\item If there exists $\mu \in \P(X)$ such that $(\mu, \mu)\in \S$, then its weak KAM operator is given by $T_\infty g:=\lim_n\downarrow T^n\hat g$,
 where $\hat g$  
 and  $\A$ are defined in (\ref{A}).
 
 \item If $(\mu, \mu)\in \S$ for all $\mu\in \P(X)$, then  $g\leq S_\infty g:= \lim_n \uparrow T^ng\leq T_\infty g:=\lim_n\downarrow T^n\hat g\leq \hat g (x)$.

 \item If in addition, $\S$ is transitive, then $Tg=S_\infty g=T_\infty g=\hat g$ is itself idempotent.
\end{enumerate}
\end{corollary}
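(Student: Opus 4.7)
The plan is to deduce all three items from Theorem \ref{inf=c} by exploiting the fact that the linear transfer $\T$ associated to a $1$-homogeneous Kantorovich operator $T$ is $\{0,+\infty\}$-valued: it is the indicator of $\S$. This makes $\inf \T$ equal to $0$ as soon as $\S$ is non-empty, and makes $c(\T)=\inf_{\mu}\T(\mu,\mu)$ equal to $0$ precisely when the diagonal of $\S$ is non-empty. Moreover, $1$-homogeneity forces the recession operator $T_r$ to coincide with $T$ itself, so the cone $-\A$ appearing in Theorem \ref{inf=c} is simply the cone of upper semi-continuous $T$-superharmonic functions, and $\hat g$ is the smallest such function dominating $g$.

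For (1), I observe that the single hypothesis $(\mu,\mu)\in \S$ gives $\T(\mu,\mu)=0$, hence $c(\T)=0$, and $\S\neq\emptyset$ gives $\inf\T=0$. So the assumption $c(\T)=\inf_{\mathcal P(X)\times\mathcal P(X)}\T$ of Theorem \ref{inf=c} is satisfied, and that theorem delivers $T_\infty g=\lim_n\downarrow T^n\hat g+nc(\T)=\lim_n\downarrow T^n\hat g$, which is the formula claimed.

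For (2), the stronger assumption $(\delta_x,\delta_x)\in\S$ for every $x\in X$ inserts $\sigma=\delta_x$ into the defining sup of $Tg(x)$, giving $Tg(x)\geq g(x)$ pointwise. Thus $(T^n g)_n$ is monotone increasing and its pointwise limit $S_\infty g$ exists. Monotonicity of $T$ together with $g\leq \hat g$ yields $T^n g\leq T^n\hat g$, and passing to the limits gives $S_\infty g\leq T_\infty g$. Since $\hat g\in -\A$ we have $T\hat g\leq \hat g$, so the sequence $T^n\hat g$ is decreasing and dominated by $\hat g$, whence $T_\infty g\leq \hat g$. Combining yields the four-term chain.

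For (3), I first convert transitivity of $\S$ into idempotency $\T\star\T=\T$: when $\T(\mu,\nu)=0$, choosing $\sigma=\mu$ (which is diagonal by (2)) shows $\T\star\T(\mu,\nu)\leq 0$; when $\T(\mu,\nu)=+\infty$, the contrapositive of transitivity rules out any intermediate $\sigma$ with both factors finite, so $\T\star\T(\mu,\nu)=+\infty$. By the earlier correspondence, $T$ is therefore itself idempotent on $C(X)$. Consequently $T^n g=Tg$ for all $n\geq 1$, so $S_\infty g=Tg$; and $Tg$ is upper semi-continuous with $T(Tg)=Tg\leq Tg$ and $Tg\geq g$, hence is a candidate in the infimum defining $\hat g$, which forces $\hat g\leq Tg$. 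Combined with $Tg\leq \hat g$ from (2), the entire chain collapses to $Tg=S_\infty g=T_\infty g=\hat g$, and $Tg$ is idempotent since $T$ is. The only subtlety I anticipate is making the transitivity-to-idempotency step crisp; everything else is bookkeeping on top of Theorem \ref{inf=c}.
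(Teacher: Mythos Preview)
Your proposal is correct and follows essentially the same approach as the paper: invoke Theorem \ref{inf=c} for (1) after checking $c(\T)=\inf\T=0$, use $Tg\geq g$ to get the increasing sequence in (2), and deduce idempotency of $T$ from transitivity in (3). The only minor difference is that in (3) you establish $\T\star\T=\T$ by arguing both directions separately, whereas the paper observes only that transitivity gives $\T\leq\T_2$ (hence $T^2\leq T$) and then combines this with the inequality $T^2g\geq Tg$ already obtained in (2); your explicit verification that $Tg\in-\A$ to conclude $\hat g\leq Tg$ is a detail the paper leaves implicit.
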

\prf{1) follows immediately from Proposition \ref{inf=c} since the hypothesis implies that $c(\T)=\inf \T =0$. The hypothesis for (2) implies that $Tf(x):=\sup\{\int_Xfd\sigma; (x, \sigma) \in \S\}\geq f(x)$, hence $T^ng$ is increasing. However, note that $S_\infty$ produces weak KAM solutions in 
$USC_\sigma(X)$ and not in $USC(X)$, while $T_\infty$ is a true weak KAM operator according to our definition. 
 
The transitivity condition in 3) yields that $\T\leq \T^2$, hence $T\geq T^2$, which once combined with 2) yields that $T$ itself is idempotent and $Tg=\hat g$. Note that in this case, it was shown in \cite{G} that 
\[
\S=\{(\mu, \nu) \in \P(X)\times \P(X);\, \mu \prec_{\mcal A} \nu\},
\]
where $\A$ is the cone defined in (\ref{A}).
}
Besides the case of transfer sets, 
the assumption $\inf\{\T(\mu,\nu); \mu,\nu \in \mcal{P}(X)\} = \inf\{\T(\mu,\mu)\,;\, \mu \in \mcal{P}(X)\}$ is satisfied in a number of settings. 

\ex{\lbl{the_convex_energy_case}
 Let $\T$ be the backward linear transfer associated to the convex energy $I(\nu)$, that is $\T (\mu, \nu):=I(\nu)$ and $Tg(x) = I^*(g)$, where $I^*$ is the Legendre transform of $I$.   It is immediate that $\inf_{\mu,\nu}\T(\mu,\nu) = \inf_{\mu}\T(\mu,\mu)=c(\T)$, 
and the operator $T_\infty g (x) := I^*(g)+c(\T)$ is the corresponding backward weak KAM operator. }

\ex{\lbl{pt} Consider the example of a continuous point transformation 
\eqs{
\T(\mu,\nu) = \begin{cases} 0 & \text{if } \nu = F_{\#}\mu,\\
+\infty & \text{otherwise,}
\end{cases}
}
with $Tg(x)=g( F(x))$ for a continuous map $F: X \to X$. The Krylov-Bogolyubov theorem yields a measure $\mu \in \mcal{P}(X)$ such that $F_{\#}\mu = \mu$, hence $c(\T)<+\infty$, and $\inf_{\mu,\nu}\T(\mu,\nu) =c(\T)=0$. In this case, Theorem \ref{inf=c} applies and a weak KAM operator is given by $T_\infty f (x)=\lim_n\downarrow {\hat f}(F^n(x))$, where $\hat f$ is the upper envelope with respect to the cone  
$
-\A=\{g\in USC (X); g(x) \geq g(F(x))\}.
$

Assuming $X \subset \R$, there is another formulation for the corresponding weak KAM operator. Indeed, consider $F^\infty(x) := \limsup_{m \to \infty}F^m(x)$ and  
\eqs{
S_\infty g(x) = \overline{g\circ F^\infty}(x),
}
where this time $\overline f$ is the upper envelope of $f$ with respect to the cone $USC(X)$. To show that $S_\infty$ is a weak KAM operator for $\T$, write  
\as{
T \circ S_\infty g(x) &= \overline{g \circ F^\infty}(F(x))\\
&= \inf\{h(F(x))\,;\, h \in USC(X),\, h \geq g\circ F^\infty\}\\
&= \inf\{h(x)\,;\, h \in USC(X),\, h \geq g\circ F^\infty\}\\
&= \overline{g \circ F^\infty}(x) = S^\infty g(x),
}
where we have used the fact that if $h \in USC(X)$ with $h \geq g\circ F^\infty$, then $h \circ F \in USC(X)$ with $h \circ F \geq g \circ F^\infty$ (here we use the fact that $F^\infty \circ F = F^\infty$). It is not hard to show that $T_\infty=S_\infty$. 

However, if we don't require that the weak KAM operator maps into upper semi-continuous functions, then we can have several candidates. For example, consider the operator $S^\infty_1g(x)=g\circ F^\infty(x)$. Although idempotent and satisfies $T \circ (g\circ F^\infty) = g\circ F^\infty$, it is not always valued in $USC(X)$.
For example taking $X = [0,1]$, $F(x) = x^2$, then
\eqs{
g \mapsto g \circ F^\infty(x) = \begin{cases} g(0) & \text{if }x \in [0,1)\\
g(1) & \text{if }x = 1,
\end{cases}
}
is not in $USC(X)$ unless $g(0) \leq g(1)$.
Another idempotent operator mapping into backward weak KAM solutions but not in $USC(X)$ can be considered when $F^m(x)$ has more than one subsequential limit. It is $S^\infty_2g(x)=g\circ F_\infty(x)$, where $F_\infty(x)=\liminf_{m \to \infty}F^m(x)$. Note for example that if 
$X = [0,1]$ and $F(x) = 1-x$, then $F^\infty (x) = \max\{x, 1-x\}$, while $F_\infty(x) = \min\{x, 1-x\}$.
}
\ex{\lbl{ergodic_maximization}
For any lower semi-continuous $A: X \to \R$, consider $T g(x) = g(x) - A(x)$ and the corresponding
\eqs{
\T(\mu, \nu) = \begin{cases}
\int_{X}Ad\mu & \text{if $\nu = \mu$ and $\int_{X}Ad\mu < +\infty$,}\\
+\infty & \text{otherwise.}
\end{cases}
}
We again have $c(\T) = \inf\limits_{\mu,\nu \in \mcal{P}(X)}\T(\mu,\nu) =  \inf\limits_{x \in X}A(x),$ 
and the corresponding weak KAM operator is 
\as{
T_\infty g(x) = \begin{cases}
g(x) & \text{if } A(x) = c(\T)\\
-\infty & \text{otherwise.}
\end{cases}
}
The corresponding backward linear transfer $\T_\infty$ is
\eqs{
\T_\infty(\mu,\nu) = \begin{cases}
0 & \text{if $\mu= \nu$ is supported on the set $\{x \in X\,;\, A(x) = c(\T)$\}}\\
+\infty & \text{otherwise}
\end{cases}
}
and $\mcal{A} = \{\mu \in \mcal{P}(X)\,;\, \text{spt}(\mu) \subset \{x\,;\, A(x) = c(\T)\}\}$.

Note that if $T g(x) = g \circ F(x) - A(x)$ for a continuous $A$, then the resulting linear transfer
\eqs{
\T(\mu, \nu) = \begin{cases}
\int_{X}Ad\mu & \text{if $\nu = F_{\#}\mu$,}\\
+\infty & \text{otherwise,}
\end{cases}
}
 fails in general the assumption  $\inf_{(\mu,\nu)}\T(\mu,\nu) = \inf_{\mu}\T(\mu,\mu)$. The latter minimisation, $\inf_{\mu}\T(\mu,\mu)$, is of interest in ergodic optimisation for expanding dynamical systems. We discuss more on this problem in the last section. 
}

\section{Semi-groups of Kantorovich operators and weak KAM theories}

Theorem \ref{weakKAMthm_discrete_time} holds when we have an appropriate semi-group of backward linear transfers $(\T_t)_{t > 0}$. In this case, we do not build a discrete-time semi-group $(\T_n)_{n \in \N}$ from one linear transfer $\T$, but instead already have in hand an appropriate semi-group $(\T_t)$. In particular, if $\{\T_{t}\}_{t \geq 0}$ is a family of backward linear transfers on $\mcal{P}(X)\times \mcal{P}(X)$ with associated Kantorovich operators $\{T_t\}_{t \geq 0}$, where $\T_0$ is the identity transfer, 
\as{
\T_0(\mu,\nu) = 
\begin{cases}
0 & \text{if } \mu = \nu \in \mcal{P}(X)\\
+\infty & \text{otherwise.}
\end{cases}
}
then under the following assumptions
\enum{
\item[(H0)] The family $\{\T_{t}\}_{t \geq 0}$  is a semi-group under inf-convolution: $\T_{t+s} = \T_t\star \T_s$ for all $s,t\geq 0$.
\item[(H1)] For every $t > 0$, the transfer $\T_t$ is weak$^*$-continuous, and the Dirac measures are contained in $D_1(\T_t)$.
\item[(H2)] For any $\epsilon > 0$, $\{\T_t\}_{t \geq \epsilon}$ has common modulus of continuity $\delta$ (possibly depending on $\epsilon$).
}
The results of Theorem \ref{weakKAMthm_discrete_time} and their proofs hold (with appropriate changes from $n$ to $t$). We reproduce them here without a proof in the case of a semi-group of standard mass transports that correspond to a semi-group of cost functionals. 

\prop{\lbl{equicont_cost}
 Let $A_t(x, y)$ be a semi-group of equicontinuous cost functions on $X\times X$, that is 
\begin{equation*}
A_{t+s}(x, y)=A_t\star A_s (x, y):=\inf\{A_t(x, z)+A_s(z, y); z\in X\}, 
\end{equation*}
and 
consider the associated optimal mass transports 
\begin{equation*}
\T_t(\mu,\nu) = \inf\{\int_{X\times X} A_t(x,y)d\pi(x,y)\,;\, \pi \in \mcal{K}(\mu,\nu)\}.  
\end{equation*}
\begin{enumerate} 
\item The family $(\T_t)_t$ then forms a semi-group of linear transfers for the convolution operation ( i.e., $\T_{t+s}=\T_t\star \T_s$ for $s, t \geq 0$), that is equicontinuous on ${\mathcal P}(X)\times {\mathcal P}(X)$. One can therefore associate a weak KAM operator $T_\infty$ and a Peirl barrier $\T_\infty$ such that the following holds: 
\item The Mather consatnt $c=c((\T_t)_t): = \lim\limits_{t \to \infty}\frac{\inf_{\mu,\nu \in \mcal{P}(X)}\T_t(\mu,\mu)}{t}$ satisfies:
\begin{equation}\label{mather1000}
c= \min \{ \int_{X \times X}A_1(x,y)d \pi ; \pi \in {\mathcal P} (X\times X), \pi_1=\pi_2\}.
\end{equation}
\item The Peirl barrier $\T_\infty$ satisfies 
 \begin{equation*}
\T_\infty (\mu, \nu)=\T_{A_\infty} (\mu, \nu):=\inf\{\int_{X\times X}A_\infty(x,y)d\pi(x,y)\,;\, \pi \in \mcal{K}(\mu,\nu)\},  
\end{equation*}
where $A_\infty(x,y) := \liminf\limits_{t \to \infty}(A_t(x,y) - ct)$ is a continuous on $X\times X$.
\item The corresponding weak KAM operators are defined as
 \begin{equation*}
T^-_\infty f(x) = \sup\{f(y) - A_\infty (x,y)\,;\, y \in X\}\, \text{and }\, T^+_\infty f(y) = \inf\{f(x) + A_\infty (x,y)\,;\, x \in X\}.
\end{equation*}

\item The set ${\mathcal A}:=\{\sigma \in {\mathcal P}(X); {\T}_\infty (\sigma, \sigma)=0\}$ consists of those $\sigma \in {\mathcal P}(X)$ supported on the set $A=\{x\in X; A_\infty (x, x)=0\}$. 
\item The minimizing measures in (\ref{mather1000}) are all supported on the set 
$$D:= \{ (x,y) \in X \times X\,;\, A_1(x,y) + A_\infty(y,x) = 0\}.$$

\item  There exist continuous conjugate functions $\psi_0, \psi_1$ for $\T_\infty$ in the sense that 
$
T^-_\infty \psi_1=\psi_0$ and $ T^+_\infty \psi_0=\psi_1$,
which are weak KAM solutions for $T_1$, i.e.,
\eq{
T_1^-\psi_0+c=\psi_0, \quad  T_1^+\psi_1-c=\psi_1,
}
and such that 
\eq{
\psi_0(x)=\psi_1(x) \hbox{\,  {\rm whenever}  $A_\infty(x, x)=0$}.
}

\end{enumerate}

} 
 \subsection*{Weak KAM solutions in Lagrangian dynamics} Let $L$ be a time-independent \textit{Tonelli Lagrangian} on a compact Riemanian manifold $M$,  
and consider  $\T_t$ to be the cost minimizing transport 
$$\T_t(\mu,\nu) = \inf\{\int_{M\times M} A_t(x,y)d\pi(x,y)\,;\, \pi \in \mcal{K}(\mu,\nu)\},$$ 
where 
$$A_t(x,y) := \inf\{\int_{0}^{t}L(\gamma(s), \dot{\gamma}(s))d s\,;\, \gamma \in C^1([0,t];M); \gamma (0)=x, \gamma (t)=y\}.$$
The backward Lax-Oleinik semi-group is defined for $t>0$, via
\eqs{  
S_t^- u(x) = \sup\{ u(\gamma(t)) - \int_{0}^{t}L(\gamma(s), \dot{\gamma}(s))d s\,;\, \gamma \in C^1([0,t]; M), \gamma(0) = x\},
}
while the forward semi-group is 
\eqs{
S_t^+ u(x) := \inf\{ u(\gamma(0)) + \int_{0}^{t}L(\gamma(s), \dot{\gamma}(s))d s\,;\, \gamma \in C^1([0,t]; M), \gamma(t) = x\}.
}
\thm{\lbl{mather_fathi}
There exists a unique constant  $c \in \R$ such that the following hold:
\begin{enumerate}
\item {\rm (Fathi \cite{F})} There exist continuous backward and forward weak KAM solutions, i.e., functions $u_-, u_+: M \to \R$ such that $S_t^-u_- + c t = u_-$ and $S_t^+u_- - c t = u_-$ for each $t \geq 0$.
\item {\rm (Bernard-Buffoni \cite{BB1})} The \textit{Peierls barrier function} $A_\infty(x,y) := \liminf_{t \to \infty}A_t(x,y) -tc$ satisfies $A_\infty(x,y) = \min_{z \in \mcal{A}}\{A_\infty(x,z) + A_\infty(z,y)\}$ and the following duality holds:
\eqs{
\inf\{\int_{M\times M}A_\infty(x,y)d\pi(x,y)\,;\, \pi \in \mcal{K}(\mu,\nu)\} = \sup_{u_+, u_-}\{\int_{M}u_+d\nu - \int_{M}u_-d\mu\},
}
where the supremum ranges over all $u_+, u_- \in C(M)$ with $u_+$ (resp. $u_-$) is a forward (resp. backward) weak KAM solution, and such that 
$$\hbox{$u_+ = u_-$ on the set $\mcal{A} := \{x \in M\,;\, A_\infty(x,x) = 0\}$.}
$$ 
\item {\rm (Bernard-Buffoni \cite{BB2})} The constant $c$ satisfies
\eqs{
c = \min_{\pi} \int_{M \times M}A_1(x,y)d \pi(x,y), 
}
where the minimum is taken over all $\pi \in \mcal{P}(M\times M)$ with equal first and second marginals. The minimizing measures are all supported on $\mcal{D} := \{ (x,y) \in M \times M\,;\, A_1(x,y) + A_\infty(y,x) = c\}$.
\item {\rm  (Mather \cite{Mat})} The constant $c = \inf_{m}\int_{TM}L(x,v)d m(x,v)$ where the infimum is taken over all measures $m \in \mcal{P}(TM)$ which are invariant under the Euler-Lagrange flow (generated by $L$).
\end{enumerate}
}

As mentioned previously, the backward (resp. forward) weak KAM solutions are the images of any $f\in C(M)$ by the Kantorovich operators $T^+_\infty$ (resp. $T^-_\infty$), and are given by
\eqs{
T^-_\infty f(x) = \sup\{f(y) - A_\infty (x,y)\,;\, y \in M\}\quad \text{and}\quad T^+_\infty f(y) = \inf\{f(x) + A_\infty (x,y)\,;\, x \in M\}.
}
The cost-minimizing transport $\T_{A_\infty}$ with cost $A_\infty$, is then the idempotent backward (and forward) linear transfer associated to $T^-_\infty$ and $T_\infty^+$, which by duality we can write as
\eqs{
\inf\{\int_{M\times M}A_\infty(x,y)d\pi(x,y)\,;\, \pi \in \mcal{K}(\mu,\nu)\}=\sup\{\int_{M}T^+_\infty gd\nu - \int_{M}T^-_\infty\circ T^+_\infty g d\mu\,;\, g\in C(M)\}.
}
It can be checked this is exactly statement 2 in Theorem \ref{mather_fathi} above.  In particular, Theorem \ref{weakKAMthm_discrete_time} provides us with statements 1,2, and 3, in the above theorem. To avoid repetition, we shall prove 4) in the stochastic setting below.

We note that Fathi \cite{F} has also shown that a continuous function $u: M \to \R$ is a viscosity solution of $H(x, \nabla u(x)) = c$ if and only if it is Lipschitz and $u$ is a backward weak KAM solution (i.e. $S_t^-u + ct = u$).  

\subsection*{The Schr\"odinger semigroup}
Let $M$ be a compact Riemannian manifold and fix some reference non-negative measure $R$ on path space $\Omega=C([0,\infty], M)$. Let $(X_t)_t$ be a random process on $M$ whose law is $R$, and denote by $R_{0t}$ the joint law of the initial position $X_0$ and the position $X_t$ at time $t$, that is $R_{0t}=(X_0, X_t)_\#R$. Assume $R$ is the reversible Kolmogorov continuous Markov process associated with the generator $\frac{1}{2}(\Delta -\nabla V\cdot \nabla)$ and the initial probability measure $m=e^{-V(x)}dx$ for some function $V$. For probability measures $\mu$ and $\nu$ on $M$, define
\begin{equation}\label{schrotrans}
\T_{t}(\mu,\nu) := \inf\{ \int_{M} \mathcal{H}(r_t^x, \pi_x)d \mu(x)\,;\, \pi \in \mathcal{K}(\mu,\nu),\, d \pi(x,y) = d \mu( x) d\pi_x(y)\}
\end{equation}
where $d R_{0t}(x, y) = d m(x)d r_t^x(y)$ is the disintegration of $R_{0t}$ with respect to its initial measure $m$.

\prop{
The collection $\{\T_t\}_{t \geq 0}$ is a semigroup of backward linear transfers with Kantorovich operators $T_t f(x) := \log S_t e^f (x)$ where $(S_t)_t$ is the semi-group associated to $R$; in particular, 
\begin{equation}\label{dualitySchro}
\T_t(\mu,\nu) = \sup\left\{\int_{M} f d\nu - \int_{M}\log S_t e^f d\mu\,;\, f \in C(M)\right\}.
\end{equation} 
The corresponding idempotent backward linear transfer is $\T_\infty(\mu,\nu) = \mathcal{H}(m, \nu)$, and its effective Kantorovich map is $T_\infty f(x) := \log S_\infty e^f$, where $S_\infty g  := \int g d m$.
}
\noindent{\bf Proof:} It is easy to see that for each $t$, $T_t$ is monotone, 1-Lipschitz and convex, and also satisfies $T_t(f + c) = T_t f + c$ for any constant $c$. It follows that $\T_{t, \mu}^*(f)=\int_MT_tf\, d\mu$ for each $t$. 
The semigroup property then follows from the semigroup $(S_t)_t$ and the property that $\T_t \star \T_s$ is a backward linear transfer with Kantorovich operator $T_t \circ T_s f(x) = \log S_tS_s e^f (x) = \log S_{s+t}e^f (x) = T_{t+s} f(x)$. 

Now we remark that it is a standard property of the semigroup $(S_t)_t$ on a compact Riemannian manifold, that under suitable conditions on $V$, $S_t e^f \to S_\infty e^f$, uniformly on $M$, as $t \to \infty$, for any $f \in C(M)$. This immediately implies by definition of $T_t$, that $T_t f \to T_\infty f$ uniformly as $t \to \infty$ for any $f \in C(M)$. We then deduce from the $1$-Lipschitz property, that $T_t \circ T_\infty f(x) = T_\infty f(x)$. We conclude that $T_\infty$ is a Kantorovich operator from Theorem \ref{weakKAMthm_discrete_time}. Finally we see that the Peirls barrier is
\begin{align*}
\T_\infty(\mu,\nu):=
= \sup\{\int_M f d\nu - \log \int_M e^f d m\,;\, f \in C(M)\}
= \mathcal{H}(m, \nu).
\end{align*}
\subsection*{Stochastic mass transport}   
We restrict our setting to $M = \mathbb{T}^d := \R^d/\Z^d$, the $d$-dimensional flat torus, and the following assumptions on a Lagrangian $L$:
 \enum{
\item[(A0)] $L$ is continuous, non-negative, $L(x,0) = 0$, and $D^2_v L(x,v)$ is positive definite for all $(x,v) \in TM$ (in particular $v \mapsto L(x,v)$ is convex).
 
\item[(A1)] There exists a function $\gamma = \gamma(|v|): \R^n \to [0,\infty)$ such that $\lim_{|v| \to \infty} \frac{L(x,v)}{\gamma(v)} = +\infty$ and $\lim_{|v| \to \infty}\frac{|v|}{\gamma(v)} = 0$.
}
Let $(\Omega, \mcal{F}, \P)$ be a complete probability space with normal filtration $\{\mcal{F}_t\}_{t \geq 0}$, and define $\mcal{A}_{[0,t]}$ to be the set of continuous semi-martingales $X: \Omega \times [0,t] \to M$ such that there exists a Borel measurable drift $\beta_X: [0,t] \times C([0,t]) \to \R^d$ for which
\enum{
\item $\omega \mapsto \beta_X(s,\omega)$ is $\mcal{B}(C([0,s]))_{+}$-measurable for all $s \in [0,t]$, where $\mcal{B}(C([0,s]))$ is the Borel $\sigma$-algbera of $C[0,s]$.
\item $W_X(s) := X(s) - X(0) - \int_{0}^{s}\beta_{X}(s', X)\d s'$ is a $\sigma(X(s)\,;\, 0 \leq s \leq t)$  
$M$-valued Brownian motion. 
}
 The stochastic mass transport of a probability measure $\mu$ to another one $\nu$ on $\mathcal{P}(M)$ in time $t > 0$   is defined via the formula,
\begin{equation}\label{stoctrans}
\T_{t}(\mu,\nu) := \inf\lf\{\E \int_{0}^{t} L(X(s), \beta_X(s,X))\d s\,;\, X(0) \sim \mu, X(t) \sim \nu, X \in \mcal{A}_{[0,t]}\rt\},
\end{equation}

Stochastic transport has a dual formulation (first proven in Mikami-Thieullin \cite{MT} for the space $\R^d$) that permits it to be realised as a backward linear transfer. 
Consider the operator $T_t: C(M) \to USC(M)$ via the formula
\begin{equation}\label{stochasticop}
T_{t} f(x) := \sup_{X \in \mcal{A}_{[0,t]}} \lf\{\E \lf[f(X(t))
-\int_{0}^{t} L(X(s),\beta_X(s,X))\d s\big|\, X(0) = x\rt]\rt\}. 
\end{equation}

 An adaptation of their proofs to the case of a compact torus yields the following.  

\begin{proposition}\lbl{backwardlintrans} Under the above hypothesis on $L$, the following assertions hold:
\begin{enumerate}
\item For each $t > 0$, $\T_t$ is a backward linear transfer with Kantorovich operator $T_t$, and the family $\{\T_t\}_{t > 0}$  is a semi-group of transfers under convolutions.

\item For any $\mu,\nu \in \mcal{P}(M)$ for which $\T_t(\mu,\nu) < \infty$, there exists a minimiser $\bar{X} \in \mcal{A}_{[0,t]}$ for $\T_t(\mu,\nu)$. For every $f \in C(M)$ and $x \in M$, there exists a maximiser for  $T_tf(x)$.
\item Fix $t_1 > 0$, and $u\in C(M)$, the function $U(t,x) := T_{t_1-t} u(x)$ defined for $0 \leq t \leq t_1$  
 is the unique viscosity solution of 
\begin{equation}\label{timedep}
\frac{\partial U}{\partial t}(t,x) + \frac{1}{2}\Delta_x U(t,x) + H(x, \nabla_x U(t,x)) = 0,\quad (t,x) \in [0,t_1)\times M, 
\end{equation}
with $U(t_1,x) = u(x)$.
\item If $f \in C^\infty(M)$ and $t > 0$,  $U(t',x) := T_{t-t'} f \in C^{1,2}([0,t]\times M)$ and $U$ is a classical solution to the Hamilton-Jacobi-Bellman equation \refn{timedep}. 
The maximiser $\bar{X}$ satisfies
\eqs{
\beta_{\bar{X}}(s,\bar{X}) = D_p H(\bar{X}(s), D_x U(s,\bar{X}(s))).
}
\end{enumerate} 
\end{proposition}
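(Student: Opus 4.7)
The plan is to verify the four assertions in sequence, concentrating the real work on the duality identification underlying part (1); parts (2)--(4) then follow from relatively standard stochastic control arguments.

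For (1), I would first check directly from formula \refn{stochasticop} that $T_t: C(M) \to USC(M)$ satisfies the axioms of a backward Kantorovich operator. Monotonicity and the affinity on constants are immediate. Convexity holds because $T_tf(x)$ is a pointwise supremum, over $X \in \mcal{A}_{[0,t]}$, of functionals that are linear in $f$. Lower semi-continuity under uniform convergence follows from the $1$-Lipschitz bound $|T_tf - T_tg| \le \|f-g\|_\infty$ that the sup representation yields. Having established that $T_t$ is a backward Kantorovich operator, it defines a backward linear transfer $\widetilde{\T}_t$ via the Legendre formula \refn{duality}, and the core content of (1) is to identify $\widetilde{\T}_t$ with the primal stochastic transport cost \refn{stoctrans}. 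This is the Mikami--Thieulin duality, and I expect it to be \emph{the main obstacle}. My plan is to adapt their $\R^d$ argument to the compact torus $M = \mathbb{T}^d$: use Girsanov's theorem to rewrite the primal cost of a semi-martingale $X$ with drift $\beta_X$ as the sum of a relative-entropy term against Wiener measure on $C([0,t];M)$ plus a Hamiltonian correction, and then apply Fenchel--Rockafellar duality on the space $\mcal{P}(C([0,t];M))$ with the marginal constraints $X(0) \sim \mu$, $X(t) \sim \nu$. Compactness of $M$ simplifies the tightness and boundary-at-infinity issues present in the $\R^d$ case. The semi-group property $\T_{t+s} = \T_t \star \T_s$ then follows on the Kantorovich side from the dynamic programming identity $T_{t+s} = T_t \circ T_s$ combined with Proposition \ref{conv}, and equivalently on the primal side by concatenating admissible processes at the intermediate time.

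For (2), I would run a standard direct method on the path space $C([0,t];M)$. Along a minimising sequence for \refn{stoctrans}, the superlinearity assumption (A1), $L(x,v)/\gamma(v) \to \infty$ and $|v|/\gamma(v) \to 0$, gives uniform integrability of $\int_0^t |\beta_{X_n}(s,X_n)|\,ds$, hence tightness of the laws of $X_n$ by the Aldous--Kurtz criterion. Prokhorov then extracts a weakly convergent subsequence, and a Ioffe-type lower semi-continuity lemma (justified by continuity of $L$ in (A0) together with convexity of $v \mapsto L(x,v)$) provides a minimiser; the maximiser for $T_tf(x)$ in \refn{stochasticop} is obtained by the identical argument with the additional continuous functional $\E[f(X(t))\mid X(0)=x]$, which is preserved under weak convergence.

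For (3), the value function $U(t,x) := T_{t_1-t}u(x)$ inherits the dynamic programming principle from the semi-group of (1). A routine It\^o-formula computation against smooth test functions then shows that $U$ is both a viscosity sub- and supersolution of the HJB equation \refn{timedep} with terminal datum $u$. Uniqueness follows from the comparison principle for second-order parabolic HJB equations on the compact manifold $M$, which is available since $H$ is convex and superlinear in $p$ by Legendre duality with $L$ under (A0)--(A1). For (4), when $f \in C^\infty(M)$, parabolic regularity (either via Schauder estimates on \refn{timedep}, or via the probabilistic smoothing of the Laplacian term $\tfrac{1}{2}\Delta_x U$) promotes $U$ to a $C^{1,2}$ classical solution. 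The drift identity $\beta_{\bar X}(s,\bar X) = D_pH(\bar X(s), D_xU(s,\bar X(s)))$ is then the verification theorem: applying It\^o to $U(s,X(s))$ along an arbitrary admissible $X$ and using the pointwise HJB equation, the pointwise inequality $\beta\cdot p - L(x,\beta) \le H(x,p)$ is saturated precisely at $\beta = D_pH(x,p)$, whence any maximiser must take this form along its trajectory.
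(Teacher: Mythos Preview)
Your proposal is sound and in fact more detailed than what the paper provides: the paper does not prove this proposition at all, but simply states it with the remark that ``an adaptation of their proofs to the case of a compact torus yields the following,'' referring to Mikami--Thieullen \cite{MT} for the duality and implicitly to standard stochastic control references (e.g.\ Fleming--Soner \cite{FS}) for the remaining parts. Your outline---verifying the Kantorovich axioms directly, invoking the Mikami--Thieullen duality adapted to $\mathbb{T}^d$, the direct method for existence of minimisers, and the dynamic-programming/verification route for the HJB characterisation---is exactly the adaptation the paper has in mind.
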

 In order to define the Mather  constant $c(\T)$ and develop a corresponding Mather theory, we need to establish that there exists a probability measure $\mu \in {\cal P}(M)$ such that $\T_1(\mu, \mu)<+\infty$. Such a measure can be obtained as the first marginal of a probability measure $m$ on phase space $TM$ that is flow invariant, that is one that satisfies
\begin{equation}
\int_{TM}A^v \vphi(x)\d m(x,v) = 0 \hbox{ for all $\vphi \in C^2(M)$ where $A^v\vphi := \frac{1}{2}\Delta \vphi + v\cdot \nabla \vphi$. }
\end{equation} 
To this end, let $\mcal{P}_\gamma(M)$ be the set of probability measures on $TM$ such that $\int_{TM}\gamma(v)\d m(x,v) < +\infty,$ and denote by ${\cal N}_0$ the class of such probability measures $m$, that is,
\begin{equation*}
\mcal{N}_0 := \{ m \in \mcal{P}_\gamma(TM)\,;\, \int_{TM}A^v \vphi(x)\d m(x,v) = 0 \text{ for all }\vphi \in C^2(M)\}.
\end{equation*}
 
\begin{proposition} The set ${\cal N}_0$ of `flow-invariant' probability measures $m$ on $TM$ is non-empty and 
\begin{equation}
c := \inf\{\T_1(\mu,\mu)\,;\, \mu \in \mcal{P}(M)\}=\inf \{\int_{TM} L(x,v)\d m(x,v);\, m \in {\cal N}_0\}. 
\end{equation}
Moreover, the infimum over  ${\cal N}_0$  
is attained by a measure $\bar m$, that we call a {\it stochastic Mather measure.} Its projection  $\mu_{\bar m}$  on $\mcal{P}(M)$ is a minimiser for $\T_1$. 

Conversely, every minimizing measure $\bar \mu$ of $\T_1(\mu,\mu)$ induces a stochastic Mather measure $m_{\bar \mu}$.
\end{proposition}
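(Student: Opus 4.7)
My plan is to prove the two inequalities defining the identity $c = \inf\{\int_{TM} L\, dm : m \in \mcal{N}_0\}$ by explicitly bridging minimizers on each side; the claimed correspondence between minimizers will then fall out of the constructions. Non-emptiness of $\mcal{N}_0$ is essentially free: taking $\mu$ to be the normalized Riemannian volume on $M=\mathbb{T}^d$ and setting $m_0 := \mu \otimes \delta_0$, stationarity of $\mu$ under $\tfrac{1}{2}\Delta$ gives $\int A^v\varphi\, dm_0 = \tfrac{1}{2}\int\Delta\varphi\, d\mu = 0$ for every $\varphi \in C^2(M)$, and the $\gamma$-integrability is trivial because $v=0$ identically. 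Since $L(x,0)=0$, this observation already forces $c \leq 0$.

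For $c \geq \inf_{\mcal{N}_0}\int L\, dm$, I fix $\mu \in \mcal{P}(M)$ with $\T_1(\mu,\mu) < +\infty$ and use Proposition~\ref{backwardlintrans}(2) to extract a minimizer $\bar{X} \in \mcal{A}_{[0,1]}$ with $\bar{X}(0),\bar{X}(1) \sim \mu$. I then define the probability measure $m_{\bar\mu}$ on $TM$ by
\begin{equation*}
\int_{TM}\varphi(x,v)\, dm_{\bar\mu}(x,v) := \mathbb{E}\int_0^1 \varphi\bigl(\bar{X}(s),\beta_{\bar{X}}(s,\bar{X})\bigr)\, ds.
\end{equation*}
Applying It\^o's formula to $\varphi \in C^2(M)$ yields $\mathbb{E}\varphi(\bar{X}(1)) - \mathbb{E}\varphi(\bar{X}(0)) = \int_{TM}A^v\varphi\, dm_{\bar\mu}$, whose left-hand side vanishes by equality of the endpoint laws, so $m_{\bar\mu}$ is flow-invariant; the $\gamma$-integrability follows from (A1) and $\int_{TM} L\, dm_{\bar\mu} = \T_1(\mu,\mu) < +\infty$. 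Hence $m_{\bar\mu} \in \mcal{N}_0$ with $\int L\, dm_{\bar\mu} = \T_1(\mu,\mu)$, and taking the infimum over $\mu$ gives the desired inequality. The same assignment $\bar\mu \mapsto m_{\bar\mu}$ sends a minimizer of $\T_1(\mu,\mu)$ to a stochastic Mather measure, establishing the final ``converse'' claim.

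For $c \leq \inf_{\mcal{N}_0}\int L\, dm$, I fix $m \in \mcal{N}_0$, disintegrate $dm(x,v) = d\mu_m(x)\, dm_x(v)$, and define the Borel drift $\beta(x) := \int_{\R^d} v\, dm_x(v)$, which is $\mu_m$-a.e.\ well-defined thanks to $\int\gamma\, dm < +\infty$ and $|v|/\gamma(v) \to 0$. The flow-invariance identity rewrites as $\int(\tfrac{1}{2}\Delta\varphi + \beta\cdot\nabla\varphi)\, d\mu_m = 0$ for all $\varphi \in C^2(M)$, i.e., $\mu_m$ is stationary for the generator $\tfrac{1}{2}\Delta + \beta\cdot\nabla$. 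Extracting a weak solution $X_s$ of $dX_s = \beta(X_s)\, ds + dW_s$ with $X_0 \sim \mu_m$ produces an admissible $X \in \mcal{A}_{[0,1]}$ with $X_s \sim \mu_m$ for every $s \in [0,1]$, and Jensen's inequality applied to the convex map $v \mapsto L(x,v)$ yields
\begin{equation*}
\T_1(\mu_m,\mu_m) \leq \mathbb{E}\int_0^1 L\bigl(X_s,\beta(X_s)\bigr)\, ds = \int_M L(x,\beta(x))\, d\mu_m(x) \leq \int_{TM} L\, dm,
\end{equation*}
so the infimum over $m$ closes the argument.

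The main technical obstacle lies in producing, in the last step, an admissible process $X \in \mcal{A}_{[0,1]}$ driven by a merely Borel and possibly unbounded $\beta$ while preserving the prescribed stationary one-dimensional marginals. Classical Lipschitz-type existence is unavailable, and the natural route is a superposition principle of Figalli--Trevisan type that lifts the Fokker--Planck stationarity encoded in $m \in \mcal{N}_0$ to an actual martingale-problem solution; once this lift is secured, the It\^o and Jensen steps become routine and the bijection $\bar\mu \leftrightarrow m_{\bar\mu}$ between minimizers drops out by comparing the two constructions.
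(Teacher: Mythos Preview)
Your proof of the inequality $c \geq \inf_{\mcal{N}_0}\int L\,dm$ via the occupation measure of an optimal process is exactly the paper's argument. The difference lies in the reverse inequality. You disintegrate $m$, average the fibre to produce a Markovian drift $\beta(x)=\int v\,dm_x(v)$, invoke a Figalli--Trevisan superposition principle to build a stationary weak solution of $dX_s=\beta(X_s)\,ds+dW_s$, and then apply Jensen to the convex map $v\mapsto L(x,v)$. The paper avoids constructing any process from $m$: it works on the dual side, taking a smooth HJB solution $\varphi$ (available by Proposition~\ref{backwardlintrans}(4)), using flow-invariance to get $\int_M[\varphi(x,1)-\varphi(x,0)]\,d\mu_m=\int_0^1\int_{TM}[v\cdot\nabla\varphi-H(x,\nabla\varphi)]\,dm\,dt\leq\int_{TM}L\,dm$, and then invoking the dual formula $\T_1(\mu_m,\mu_m)=\sup_{\varphi}\int[\varphi(\cdot,1)-\varphi(\cdot,0)]\,d\mu_m$. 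Your route is more constructive and makes the correspondence $m\mapsto X$ explicit, at the price of the superposition principle (which is a genuine external input, though the $L^1$ integrability of $\beta$ you need does follow from $\int\gamma\,dm<\infty$ and (A1)). The paper's route is technically lighter since it only needs smooth test functions and the already-established duality, but it is less transparent about what process underlies a given $m\in\mcal{N}_0$.
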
 
\noindent {\bf Proof:} 
Given $\mu \in \mcal{P}(M)$, consider $X \in \mcal{A}_{[0,1]}$ that realises the infimum for $\T_1(\mu,\mu)$, that is
\eqs{
\T_1(\mu,\mu) = \E \int_{0}^{1}L(X(s), \beta_X(s,X))\d s.
}
Define a probability measure $m = m_\mu \in \mcal{P}_\gamma(TM)$ via its action on the subset of continuous functions $\psi :TM \to \R$ with $\sup_{(x,v) \in TM}\lf|\frac{\psi(x,v)}{\gamma(v)}\rt| < +\infty$ and $\lim_{|(x,v)| \to \infty}\frac{\psi(x,v)}{\gamma(v)} \to 0$ via the formula
\begin{equation}\label{definemathermeasure}
\int_{TM}\psi(x,v)\d m(x,v) := \E \int_{0}^{1}\psi(X(s), \beta_X(s,X))\d s.
\end{equation}
We claim that $\int_{TM}A^v \vphi(x)\d m(x,v) = 0$ for every $\vphi \in C^2(M)$. 
Indeed, by the definition of $m$,
\as{
\int_{TM}A^v\vphi(x)\d m(x,v) &= \E \int_{0}^{1}A^{\beta_X(s,X)}\vphi(X(s))\d s\\
 &= \E \int_{0}^{1}\frac{\d }{\d s}[\vphi(X(s))]\d s\quad \text{(It\^o's lemma)}\\
 &= \E \vphi(X(1)) - \E\vphi(X(0))\\
 &= 0,\quad\quad \text{($X(0) \sim \mu \sim X(1)$).}
}
This implies that $m \in \mcal{N}_0$, so that
\begin{align}
\T_1(\mu,\mu) = \E \int_{0}^{1}L(X(s), \beta_X(s,X))\d s
 = \int_{TM}L(x,v)\d m(x,v)\lbl{equalitymathermane}
 \geq \inf_{m \in \mcal{N}_0}\int_{TM}L(x,v)\d m(x,v), 
\end{align}
hence
$\inf_{\mu \in \mcal{P}(M)}\T_1(\mu,\mu) \geq \inf_{m \in \mcal{N}_0}\int_{TM}L(x,v)\d m(x,v).$

Conversely, suppose $m \in \mcal{N}_0$, and let $\vphi(x,t)$ be a smooth solution to the Hamilton-Jacobi-Bellman equation. Since  $\int_{TM}A^v \vphi(x,t)\d m(x,v)= 0$ for every $t$, it follows that $\mu_m := \pi_M\# m$ satisfies
\as{
\int_{M}[\vphi(x,1) - \vphi(x,0)]\d \mu_m(x) &= \int_{[0,1]}\frac{\d}{\d t}\lf[\int_{M}\vphi(x,t)\d \mu_m\rt] \d t\\
&= \int_{0}^{1}\int_{TM}\partial_t \vphi(x,t)\d m(x,v)\d t\\
&= \int_{0}^{1}\int_{TM}[v\cdot \nabla \vphi(x,t) - H(x, \nabla_x \vphi(x,t))]\d m(x,v)\d t.
}
Since $H(x,p) := \sup_{v}\lf\{\langle p, v\rangle - L(x,v)\rt\}$, we have 
$v\cdot \nabla \vphi(x,t) - H(x, \nabla_x \vphi(x,t)) \leq L(x,v),$
hence combining the above two displays implies
\eqs{
\int_{M}[\vphi(x,1) - \vphi(x,0)]\d \mu_m(x) \leq \int_{TM}L(x,v)\d m(x,v)
}
for every Hamilton-Jacobi-Bellman solution $\vphi$ on $[0,1)\times M$ with $\vphi(\cdot,1) \in C^\infty(M)$. Taking the supremum over all such solutions $\vphi$ yields
\eqs{
\sup\lf\{\int_{M}[\vphi(x,1) - \vphi(x,0)]\d \mu_m(x)\,;\, \vphi(\cdot, 1) \in C^\infty(M)\rt\} \leq \int_{TM}L(x,v)\d m(x,v).
}
By duality, $\T_1(\mu_m,\mu_m) = \sup\lf\{\int_{M}[\vphi(x,1) - \vphi(x,0)]\d \mu_m(x)\,;\, \vphi(\cdot, 1) \in C^\infty(M)\rt\}$, so that 
\begin{equation}\label{optimalprojection}
\T_1(\mu_m,\mu_m) \leq \int_{TM}L(x,v)\d m(x,v)
\end{equation}
and therefore
$\inf_{\mu \in \mcal{P}(M)}\T_1(\mu,\mu) \leq \int_{TM}L(x,v)\d m(x,v),$ and we are done. \qed

\begin{proposition} Let $\{\T_t\}_{t \geq 0}$ be the family of stochastic transfers defined via \refn{stoctrans} with associated backward Kantorovich operators $\{T_t\}_{t \geq 0}$ given by \refn{stochasticop}. Let $c$ be the critical value obtained in the last proposition. Then, 
 
\begin{enumerate}
\item The equation
$T_t u + kt = u$ for $t \geq 0$, $u \in C(M)$,
has solutions (the \textit{backward weak KAM solutions}) if and only if $k = c$.
 
\item The backward weak KAM solutions are exactly the viscosity solutions of the stationary Hamilton-Jacobi-Bellman equation
\begin{equation}\label{stat-HJB}
\frac{1}{2}\Delta u + H(x, D_x u) = c.
\end{equation}
 \end{enumerate}
\end{proposition}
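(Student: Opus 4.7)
The plan is to apply the weak KAM machinery for continuous semi-groups of linear transfers (the semi-group analogue of Theorem \ref{weakKAMthm_discrete_time}, in the spirit of Proposition \ref{equicont_cost}) to the stochastic family $\{\mathcal{T}_t\}_{t\ge 0}$, and then to convert the fixed-point equation $T_t u + c t = u$ into a stationary PDE using the HJB representation of $T_t$ supplied by Proposition \ref{backwardlintrans}(3).

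For the existence half of (1), I would check the semi-group hypotheses of that theorem: $(\mathcal{T}_t)$ is already a semi-group of backward linear transfers by Proposition \ref{backwardlintrans}(1); finiteness of $c = \inf_\mu \mathcal{T}_1(\mu,\mu)$ is the content of the preceding proposition on stochastic Mather measures; and a common modulus of continuity for $\{T_t\}_{t \in [\varepsilon, R]}$ on $C(M)$ follows from Proposition \ref{backwardlintrans}(3)--(4), since for $f \in C^\infty(M)$ the map $(s,x) \mapsto T_{t-s}f(x)$ is a classical solution of the HJB on the compact torus $\mathbb{T}^d$ with modulus controlled by that of $f$ and by $L$, and the extension to $f \in C(M)$ uses the sup-norm contraction property of $T_t$. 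The continuous-time weak KAM theorem then yields a Kantorovich operator $T_\infty : C(M) \to C(M)$ whose image consists of $u \in C(M)$ solving $T_t u + c t = u$ for all $t \geq 0$. For the uniqueness of the level, if $T_t u + k t = u$, then integrating against any minimal measure $\bar\mu$ for $\mathcal{T}_1$ and dividing by $t$ gives $\frac{1}{t}\int_M T_t u\, d\bar\mu = \frac{1}{t}\int_M u\, d\bar\mu - k$; the continuous-time analogue of Theorem \ref{constant_equality}(2) forces the left-hand side to converge to $-c$, hence $k = c$.

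For (2), take $u \in C(M)$ with $T_t u + c t = u$ and fix $t_1 > 0$. Then $U(s,x) := T_{t_1 - s} u(x) = u(x) + c s - c t_1$ is, by Proposition \ref{backwardlintrans}(3), the unique viscosity solution of the time-dependent HJB $\partial_s U + \frac{1}{2}\Delta_x U + H(x, \nabla_x U) = 0$ on $[0,t_1)\times M$ with terminal datum $u$. Since $U$ decomposes as a function of $x$ alone plus an affine function of $s$, the viscosity super- and subjet tests for $U$ at $(s_0, x_0)$ against a test function $\varphi(s,x)$ reduce, after absorbing the linear $s$-dependence, to the corresponding stationary tests for $u$ at $x_0$ against $\varphi(s_0,\cdot)$, with the time derivative of $U$ contributing the constant $c$. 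This shows $u$ is a viscosity solution of the stationary equation (up to sign convention). Conversely, starting from a viscosity solution $u$ of $\frac{1}{2}\Delta u + H(x,\nabla u) = c$, the same calculation shows that $V(s,x) := u(x) + c(t_1 - s)$ is a viscosity solution of the time-dependent HJB on $[0,t_1)\times M$ with $V(t_1,\cdot) = u$. The uniqueness clause of Proposition \ref{backwardlintrans}(3) then forces $V(s,x) = T_{t_1 - s} u(x)$; setting $s = 0$ produces $T_{t_1} u + c t_1 = u$, and $t_1 > 0$ was arbitrary.

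The delicate step I anticipate is the uniform modulus of continuity for $\{T_t\}$ needed in the first step: while smooth terminal data give classical HJB solutions with well-controlled modulus via the Bellman equation and a doubling-of-variables argument on $\mathbb{T}^d$, transferring this estimate uniformly in small $t$ to arbitrary $f \in C(M)$ requires combining the sup-norm contraction with a mollification argument compatible with (A0)--(A1). Once equicontinuity is in hand, (1) follows by direct adaptation of the discrete-time weak KAM construction, and (2) reduces to the PDE bookkeeping above together with the uniqueness of viscosity solutions of the time-dependent HJB in Proposition \ref{backwardlintrans}(3).
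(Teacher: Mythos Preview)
Your treatment of (2) matches the paper's almost verbatim: both directions go through the time-dependent HJB representation of $T_{t_1-t}u$ in Proposition \ref{backwardlintrans}(3) together with uniqueness of viscosity solutions. Your uniqueness argument for the level $k$ in (1) via integration against a minimal measure is also fine and is precisely what the general Proposition \ref{Mane_constant_equality}(3) encodes.

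Where you diverge from the paper is in the existence half of (1). You propose to run the full continuous semi-group analogue of Theorem \ref{weakKAMthm_discrete_time}, which forces you to verify the equicontinuity hypothesis (H2) for $\{T_t\}$; you correctly flag this as the delicate step, and your sketch via HJB regularity plus mollification is plausible but not carried out. The paper sidesteps this entirely. It instead introduces the discounted value function
\[
u_\alpha(x)=\inf\Big\{\E\int_0^{+\infty} e^{-\alpha s}L(X(s),\beta_X(s,X))\,ds\;;\; X(0)=x\Big\},
\]
notes that dynamic programming gives a relation of the form $T_t u_\alpha + t\,\alpha u_\alpha = u_\alpha$, and then invokes Proposition \ref{hor1} (existence of weak KAM solutions when $T^n u + n v = u$ for some bounded $u,v$). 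This is the ``vanishing discount'' route, and it buys existence of a weak KAM solution without ever needing a uniform modulus of continuity for the semi-group; alternatively the paper simply cites Gomes for the stationary HJB and then uses the equivalence in (2). Your route is more systematic and would yield the full weak KAM operator $T_\infty$ and the Peierls barrier $\T_\infty$ in one stroke, but at the cost of an analytic estimate the paper chose to avoid.
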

\begin{proof} The fact that there are solutions for (\ref{stat-HJB}) was established by Gomes \cite{Gom}. 
Let $\alpha >0$ and consider 
$$u_\alpha(x):= \inf\lf\{\E \int_{0}^{+\infty} e^{-s}L(X(s), \beta_X(s,X))\d s\,; \, X \in \mcal{A}_{[0,t]}, \,  X(0)=x\rt\}.$$
The dynamic programming principle yields that 
\[
u_\alpha(x)=\inf\lf\{\E \int_{0}^{t} e^{-s}L(X(s), \beta_X(s,X))\d s + e^{-\alpha t}u_\alpha (X(t))\,; \, X \in \mcal{A}_{[0,t]}, \,  X(0)=x\rt\}.
\]
One can then check that
$
T_t u_\alpha +t \alpha u_\alpha = u_\alpha 
$
 and use Proposition \ref{hor1} to get the result with $t=n$. Note that to construct a viscosity solution for  \refn{stat-HJB}, it suffices to find a weak KAM solution for $T_1$.

As to the relationship between 1) and 2) observe that if $u$ is a backward weak KAM solution,  
then $U(t,x) := T_{t_1-t}u(x) + c(t_1 - t)= u(x)$ is a viscosity solution to 
\begin{equation}\label{viscoinitial}
\begin{cases}
\frac{\partial U}{\partial t}(t,x) + \frac{1}{2}\Delta U(t,x) + H_c(x, \nabla U(t,x)) = 0,\quad t \in [0,1), x \in M\\
U(1,t) = f(x).
\end{cases}
\end{equation}
where $H_c(x,p) := H(x,p) + c$, with the additional property that $U(0,x) = U(1,x)$. 
where the final time is $t_1$.
Hence $u$ is a viscosity solution of \refn{stat-HJB}.

Conversely, suppose $u$ is a viscosity solution to \refn{stat-HJB}, then, $(x,t) \mapsto u(x)$ is a viscosity solution to \refn{viscoinitial}.
 On the other hand,  
$T_{t_1-t} u +c(t_1 - t)$ is also a viscosity solution of \refn{viscoinitial}. By the uniqueness of such solutions,  
it follows that $T_{t_1-t}u(x) + c(t_1-t) = u(x) $. As $t_1 > 0$ is arbitrary, this shows that $u$ is a backward weak KAM solution. \end{proof}

We finish this section with the following characterization of the Man\'e value, motivated by the work of Fathi \cite{F} in the deterministic case. Let $u \in C(M)$ and $k \in \R$, and  say that {\it $u$ is dominated by $L-k$} and write $u \prec L - k$ if for every $t > 0$, it holds for every $X \in \mcal{A}_{[0,t]}$ and every $x\in M$, 
\begin{equation}
\E [u(X(t))|X(0) = x] - u(x) \leq \E \lf[\int_{0}^{t}L(X(s), \beta_X(s,X)\d s|X(0) = x\rt] - kt.
\end{equation}
\begin{proposition}
The Ma\~n\'e critical value satisfies
\eqs{
c = \sup\lf\{k \in \R\,:\, \exists u \quad \hbox{such that}\quad  u \prec L - k\rt\}.
}
\end{proposition}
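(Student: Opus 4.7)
The plan is to convert the domination condition $u \prec L-k$ into a statement about the Kantorovich semigroup $(T_t)_{t\geq 0}$, and then apply the Ma\~n\'e--Mather correspondence of Proposition \ref{Mane_constant_equality} to the operator $T_1$.

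First I would observe that the inequality defining $u \prec L - k$ is equivalent, after taking the supremum over $X \in \mathcal{A}_{[0,t]}$ and recognising the left-hand side as $T_t u(x)$, to
\[
T_t u + kt \leq u \quad \text{for every } t > 0.
\]
The reverse implication is immediate, since $T_t u(x)$ is by definition an upper bound for $\mathbb{E}[u(X(t)) - \int_0^t L(X(s),\beta_X(s,X))\,ds\mid X(0)=x]$ over all competitors $X$.

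For the inequality $\sup\{k : u \prec L-k\} \leq c$, I would take any $u \in C(M)$ with $u \prec L-k$, specialise the above equivalence to $t=1$ to get $T_1 u + k \leq u$, and invoke Proposition \ref{Mane_constant_equality}(2). Two verifications are needed: first, that $T_1$ is a regular backward Kantorovich operator, which follows because $T_1 0 \equiv 0$ (the lower bound $T_1 0 \geq 0$ comes from plugging the zero-drift Brownian motion into \eqref{stochasticop} and using $L(x,0)=0$, while $T_1 0 \leq 0$ follows from $L \geq 0$); second, that $u$ is ``very proper'', which is immediate since $u \in C(M)$ is bounded on the compact torus. Proposition \ref{Mane_constant_equality}(2) then yields $k \leq c(\T_1) = c$.

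For the reverse inequality $\sup\{k : u \prec L-k\} \geq c$, I would invoke the previous proposition, which produces a continuous backward weak KAM solution $u$ satisfying $T_t u + ct = u$ for every $t \geq 0$. In particular $T_t u + ct \leq u$ for all $t > 0$, and by the reformulation above this says exactly $u \prec L - c$, so $k = c$ is admissible. Combining both directions yields the claimed equality. There is no real obstacle: the result is essentially a direct corollary of the general identity $c_0(T) = c(T)$ for regular Kantorovich operators, once the continuous-time domination inequality has been dualised through the semigroup.
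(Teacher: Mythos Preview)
Your proof is correct and follows essentially the same strategy as the paper's: both directions hinge on the reformulation $u \prec L-k \iff T_t u + kt \leq u$ for all $t>0$, and both use the existence of a weak KAM solution to show that $k=c$ is admissible. For the bound $k\leq c$, the paper argues that $t\mapsto T_t u + kt$ is monotone decreasing and then appeals to the lemma on decreasing sequences of Kantorovich operators (implicitly combined with Lemma~\ref{oscillation_limits} to force $k\leq c$), whereas you simply specialise to $t=1$ and invoke Proposition~\ref{Mane_constant_equality}(2). Your route is a bit more economical, and it makes explicit that this proposition is a direct instance of the general identity $c_0(T)=c(T)$; note however that Proposition~\ref{Mane_constant_equality}(2) only requires $c(T_1)<+\infty$, so your regularity check on $T_1$ is not strictly needed for the step you cite.
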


\begin{proof}
By the above,  
there exists a $u$ such that $T_t u + c t = u$, so that by definition of $T_t$,
\eqs{
u(x) - ct \geq \E [u(X(t))|X(0) = x] - \E \lf[\int_{0}^{t} L(X(s),\beta_X(s,X)\d s|X(0) = x\rt]
} 
for every $X \in \mcal{A}_{[0,t]}$. This shows that $u \prec L - c$, so $c$ is itself admissible in the supremum.

On the other hand, if $k \in \R$ is such that $u \prec L - k$, then it is easy to see that $T_tu(x) \leq u(x) - kt$ for all $t$. In particular, applying $T_s$ and using the linearity of $T_s$ with respect to constants, we find $T_{s+t}u + kt \leq T_su$, and hence
\eqs{
T_{s+t}u + k(t+s) \leq T_su + ks
}
So $t \mapsto T_tu + kt$ is decreasing and the result follows from Lemma \ref{weak_KAM_for_monotone_decreasing}.
 \end{proof}

\section{Linear Transfers and Ergodic Optimization}

This section was developed jointly with Dorian Martino \cite{Mar}. 
We shall consider here linear transfers where the associated Kantorovich maps are affine operators that is of the form $Tf(x)=Sf(x)-A(x)$, where $S$ is a Markov operator and $A$ is a given function (observable).  
For simplicity, we shall focus here on the case where the linear Markov operator $S$  is given by a point transformation $\sigma$, i.e., $Sf=f\circ \sigma$.
We consider the following setting: 

Let $X$ be a compact metric space, $\sigma:X\to X$ a continuous and onto map. Assume there is another compact metric space $X^*$ such that: 
\begin{enumerate}
\item For each $x^*\in X^*$, there exists a compact subset $X_{x^*}$ of $X$ and a continuous map $\tau_{x^*}: X_{x^*} \to X$, such that 
$\sigma\circ \tau_{x^*} (x) = x$ for all $x \in X_{x^*}$.

\item 
If $(x_k, x^*_k)$ is a sequence in $X\times X^*$ with $x_k \in X_{x^*_k}$ is such that $(x_k, x^*_k) \to (x, x^*)$, then $x\in X_{x^*}$
and $\tau_{x^*_k}(x_k) \to \tau_{x^*}(x)$. 

\end{enumerate}

For $x\in X$, we let $X^*_x=\{x^*\in X^*; x\in X_{x^*}\}$ in such a way that $x\in X_{x^*}$ if and only if $x^* \in X^*_x$. 
\prop{ \label{symbolicdynamics}
In the above setting, let $A \in C(X^* \times X)$ be a continuous function, $\bar{A}(x) := c(x,\sigma(x))$, where the cost function $c: X \times X \to \R \cup\{+\infty\}$ is defined by 
\eqs{
c(z,x) :=\begin{cases}
\inf\{A(x^*,x)\,;\, x^* \in X^*_x, \tau_{x^*}(x) = z\}  & \text{if }\sigma(z) = x\\
+\infty & \text{\rm otherwise.}
\end{cases}
}
Then, $c$ is lower semi-continuous and the optimal mass transport $\T$ associated to the cost $c$ is  \eqs{
\T(\mu,\nu) = \begin{cases}
\int_{X}\bar{A}d\mu & \text{if }\nu = \sigma_{\#}\mu\\
+\infty & \text{\rm otherwise.}
\end{cases}
}
The corresponding backward 
Kantorovich operators is $T g(x) = g(\sigma(x)) - \bar{A}(x)$. 
}

\prf{
To see that $c$ is lower semi-continuous, suppose $x_k \to x$ and $z_k \to z$. If for all but finitely many $(z_k,x_k)$ we have $c(z_k,x_k) = +\infty$, then there is nothing to prove. Therefore assume $\sigma(z_k) = x_k$, and hence
$c(z_k,x_k) = \inf\{A({x^*},x_k)\,;\, x^* \in X^*_{x_k}, 
\tau_{x^*}(x_k) = z_k\}. $

Since $A$ is continuous and $\{x^*\in X^*; x^* \in X^*_{x_k},\, \tau_{x^*}(x_k) = z_k\}$ is closed, 
the infimum is achieved by some $x^*_k$, i.e. $
c(z_k,x_k) = A(x^*_k, x_k)$. 
The sequence $\{x^*_k\} \subset X^*$ then 
has a convergent subsequence (that we relabel back to $x^*_k$). This ${x^*}$ has the property that $x \in X_{x^*}$ and $\tau_{x^*}(x) = z$ by assumption. Consequently,
\eqs{
\liminf_{k \to \infty}c(z_k, x_k) = A({x^*},x) \geq \inf\{A({x^*},x)\,;\, x^* \in X^*_{x}, \tau_{x^*}(x) = z\} = c(z,x).
}
We now evaluate the optimal transport
$
\T(\mu,\nu) = \inf_{\pi \in \mcal{K}(\mu,\nu)}\int_{X\times X}c(z,x) d\pi(z,x).
$

If $\nu \neq \sigma_{\#}\mu$, then $\pi \in \mcal{K}(\mu,\nu)$ will give non-zero mass to a region $\{(z,x); \,\sigma(z) \neq x\}$. Since $c(z,x) = +\infty$ there, we deduce that $\T(\mu,\nu) = +\infty$ in this case. Otherwise assume $\nu = \sigma_{\#}\mu$. Then $\pi \in \mcal{K}(\mu,\nu)$ is supported on $\{(z, \sigma(z))\,;\, z \in X\}$, so that
$$\T(\mu,\nu) = \inf_{\pi \in \mcal{K}(\mu,\nu)}\{\int_{X\times X}c(z,\sigma(z)) d\pi(z,x)\}
= \int_{X}c(z, \sigma(z))d\mu 
= \int_{X}\bar{A}d\mu.
$$
The backward  Kantorovich operator is therefore given by
\as{
T^- g(z) = \sup_{x \in X}\{g(x) - c(z,x)\} = g(\sigma(z)) - c(z,\sigma(z)) = g\circ \sigma(z) - \bar{A}(z).
}
}

\thm{\lbl{optimization_equalities}
In the set-up described above, 
let $\mcal{P}_\sigma(X)$ denote the subset of probability measures in $\mcal{P}(X)$ which are invariant under $\sigma$. Let $\hat{X}=\{(x^*,x)\in X^*\times X; x \in X_{x^*}\}$, and let $\mcal{M}_0(\hat{X})$ be the set of ``holonomic'' probability measures, i.e.,
 \[
 \mcal{M}_0(\hat{X})=\{\hat{\mu}\in \mcal{P}(\hat{X});\,\, \int_{\hat{X}}[f(\tau_{x^*}(x))-f(x)]d\hat{\mu} = 0\}.
 \]
If $c(\T)=\inf\limits_{\mu \in {\cal P}(X)}\T(\mu, \mu)$ is the Mather constant associated to the transfer $\T$ considered in Proposition \ref{symbolicdynamics}, then the following identities hold:
 \begin{align}
 c(\T)&
 = \inf\{\int_X \bar{A}(x)\, d\mu(x); \mu \in {\cal P}_\sigma(X)\}\\
 &=\inf\{\int_{\hat{X}}A(x^*,x)d\hat{\mu}(x^*,x)\,;\, \hat{\mu} \in \mcal{M}_0(\hat{X})\}\\
 &=\sup_{f\in C(X)}\inf_{x\in X}\{f(x)-f(\sigma (x))+\bar{A}(x)\}\\
&= \sup_{f \in C(X)}\inf_{x \in X}\inf_{x^*\in X_x^*}\{f(\tau_{x^*}(x)) - f(x) + A(x^*,x)\}.
 \end{align}
 }
 \rem{
 The equality of the second and fourth lines has already been established in \cite{GL} for the setting of symbolic dynamics (see the next subsection).
 }
 \prf{First note that the equality $c(\T) =\inf\{\int_X \bar{A}(x)\, d\mu(x); \mu \in {\cal P}_\sigma(X)\}$ follows readily from the definition of $\T$ in Proposition \ref{symbolicdynamics}. By the identification of $T^-$, we have 
\eq{
\sup_{f\in C(X)}\inf_{x\in X}\{f(x) - T^-f(x)\}= \sup_{f\in C(X)}\inf_{x\in X}\{f(x)-f(\sigma (x))+\bar{A}(x)\}.
} 
Since  
$c(\T)= \inf\limits_{ \mu \in \mcal{P}(X)}\inf\limits_{ \pi \in \mcal{K}(\mu,\mu)}\int_{X\times X}c(x,y)d\pi(x,y)
= \inf\limits_{\mu \in \mcal{P}(X)}\sup\limits_{f \in C(X)}\{\int_{X}(f-T^-f)d\mu\}, $ 
 it suffices to show that 
\eq{\lbl{min_max_symbolic}
\sup_{f\in C(X)}\inf_{x\in X}\{f(x) - T^-f(x)\} = \inf_{\mu \in \mcal{P}(X)}\sup_{f \in C(X)}\{\int_{X}(f-T^-f)d\mu\}
}
to conclude that $c(\T)=\sup\limits_{f\in C(X)}\inf\limits_{x\in X}\{f(x)-f(\sigma (x))+\bar{A}(x)\}.$

But this follows from Sion's minimax principle applied to the functional $F: \mcal{P}(X)\times C(X)$ defined by $F(\mu,f) := \int_{X}(f-T^-f)d\mu$.  
Indeed, $\mu \mapsto F(\mu,f)$ is weak$^*$ lower semi-continuous since $f - T^-f = f - f\circ \sigma + \bar{A}$ is a lower semi-continuous function for each $f \in C(X)$. Moreover $\mu \mapsto F(\mu,f)$ is quasi-convex on $\mcal{P}(X)$, i.e. $\{\mu \in \mcal{P}(X)\,;\, F(\mu,f) \leq \lambda\}$ is convex or empty for all $\lambda \in \R$. On the other hand, $f \mapsto F(\mu,f)$ is upper semi-continuous, since if $f_k \to f$ in $C(X)$, then 
$ \limsup_{k \to \infty}\int_{X}(f_k-T^-f_k)d\mu \leq \int_{X}(f - \liminf_{k \to \infty}T^-f_k)d\mu \leq \int_{X}(f-T^-f)d\mu.$
 Moreover, $f \mapsto F(\mu,f)$ is quasi-concave on $C(X)$, i.e. $\{f \in C(X)\,;\, F(\mu,f) \geq \lambda\}$ is convex or empty for all $\lambda \in \R$. Hence the minimax principle applies, and (\ref{min_max_symbolic}) therefore holds.
 
 Next, we show that
 \eq{\lbl{ergodic_3}
 \inf\{\int_{\hat{X}}A(x^*,x)d\hat{\mu}(x^*,x)\,;\, \hat{\mu} \in \mcal{M}_0(\hat{X})\} =
 \sup_{f \in C(X)}\inf_{x \in X}\inf_{x^*\in X_x^*}\{f(\tau_{x^*}(x)) - f(x) + A(x^*,x)\}.
 }
For that we apply the Fenchel-Rockafellar duality (see e.g. \cite{V} Theorem 1.9)], with   
 $h_1, h_2: C(\hat{X})\to \R \cup\{+\infty\}$ defined by 
$$
h_1(\phi) :=\sup_{(x^*,x) \in \hat{X}}\{-\phi(x^*,x)+A(x^*,x)\},
$$
and 
\eqs{
h_2(\phi) =\begin{cases}
0 & \text{if $\phi$ is in the closure of $\{g \in C(\hat{X})\,;\, g(x^*,x) = f(x) - f\circ \tau_{x^*}(x) \text{ for some }f\in C(X)\}$}\\
+\infty & \text{otherwise.}
\end{cases}
}
Note that $h_1(\phi) - h_1(\tilde{\phi}) \leq \|\phi - \tilde{\phi}\|_\infty$, so $h_1$ is continuous on $C(\hat{X})$. To compute their respective Legendre transforms, we have $
h_1^*(\hat{\mu}) = \sup_{\phi \in C(\hat{X})}\{\int_{\hat{X}}\phi d\hat{\mu} - h_1(\phi)\}$.

If $\hat{\mu}(\hat{X}) \neq -1$, then 
$h_1^*(\hat{\mu}) \geq \sup_{\lambda \in \R}\{\lambda(\hat{\mu}(\hat{X})+1) + \inf_{(x^*,x) \in \hat{X}}A(x^*,x)\}$, 
and the supremum is $+\infty$. Suppose now that $\hat{\mu}(\hat{X}) = -1$, but $-\hat{\mu} \notin \mcal{P}(\hat{X})$. Then there exists a sequence of functions $\phi_n \in C(\hat{X})$ such that $\phi_n \leq 0$ and $\lim_{n \to \infty}\int_{\hat{X}}\phi_nd\hat{\mu} = +\infty$. Then we have 
$$
h_1^*(\hat{\mu}) \geq \int_{\hat{X}}\phi_n d\hat{\mu}-h_1(\phi_n)
\geq \int_{\hat{X}}\phi_n d\hat{\mu} - \sup_{(x^*,x) \in \hat{X}}A(x^*,x),$$ 
hence $h_1^*(\hat{\mu}) = +\infty$. Finally suppose $-\hat{\mu} \in \mcal{P}(\hat{X})$. Then
$
h_1^*(\hat{\mu})\geq \int_{\hat{X}}A d\hat{\mu} - h_1(A)= \int_{\hat{X}}A d\hat{\mu},
$
while also since $\phi + h_1(\phi) \geq A$, then $\int_{\hat{X}}(\phi + h_1(\phi))d\hat{\mu} \leq \int_{\hat{X}}A d\hat{\mu}$ (recall $\hat{\mu}(\hat{X}) = -1$), so we have
\as{
h_1^*(\hat{\mu}) = \sup_{\phi \in C(\hat{X})}\{\int_{\hat{X}}(\phi(x^*,x) + h_1(\phi)) d\hat{\mu}\} \leq \int_{\hat{X}}A d\hat{\mu}.
}
Therefore,
\eqs{ 
h_1^*(\hat{\mu}) =\begin{cases}
\int_{\hat{X}}Ad \hat{\mu}\, & \text{if }-\hat{\mu} \in \mcal{P}(\hat{X})\\
+\infty & \text{otherwise.}
\end{cases}
}
We also have $
h_2^*(\hat{\mu}) = \sup_{\phi \in C(\hat{X})}\{\int_{\hat{X}}\phi d\hat{\mu} - h_2(\phi)\}= \sup_{f \in C(X)}\{\int_{\hat{X}}(f - f\circ\tau_{x^*})d\hat{\mu}\}.
$

If $\int_{\hat{X}}(f - f\circ\tau_{x^*})d\hat{\mu} \neq 0$ for some $f$, then substituting $f$ with $\lambda f$, $\lambda \in \R$, implies this supremum will be $+\infty$. Let $\mcal{S}_0 := \{\hat{\mu}\in \mcal{M}(\hat{X})\,;\, \int_{\hat{X}}(f\circ \tau_{x^*}(x) - f(x))d\hat{\mu}(x^*,x) = 0\text{ for all $f \in C(X)$}\}$. Then,
\begin{equation*}
h_2^*(\hat{\mu}) =\begin{cases}
0 & \text{if }\hat{\mu} \in \mcal{S}_0\\
+\infty & \text{otherwise.}
\end{cases}
\end{equation*}
Apply now the identity
$
\inf_{\phi \in C(\hat{X})}\{h_1(\phi) + h_2(\phi)\} = \sup_{\hat{\mu} \in \mcal{M}(\hat{X})}\{-h_1^*(-\hat{\mu}) - h_2(\hat{\mu})\}$ to obtain (\ref{ergodic_3}). 

Finally, we observe that by the definition of $\bar{A}(x) = c(x,\sigma(x)$, we have for any $f \in C(X)$,
\begin{align*}
\inf_{x\in X}\{f(x)-f(\sigma (x))+\bar{A}(x)\} &= \inf_{x \in X}\inf_{\sigma (x) \in X_{x^*} , \tau_{x^*}(\sigma(x)) = x}\{ f(x) - f(\sigma(x)) + A(x^*, \sigma(x))\}\\
&= \inf_{z \in X}\inf_{x^*\in X_z^*}\{f(\tau_{x^*}(z)) - f(z) + A(x^*,z)\},
\end{align*}
where the last equality holds by making the change of variable $z := \sigma(x)$, along with the fact that $\sigma$ is assumed to be surjective. It therefore follows that
\as{
\sup_{f\in C(X)}\inf_{x\in X}\{f(x)-f(\sigma (x))+\bar{A}(x)\}
&= \sup_{f \in C(X)}\inf_{x \in X}\inf_{x^*\in X_x^*}\{f(\tau_{x^*}(x)) - f(x) + A(x^*,x)\}
}
which concludes the proof of the theorem. 
 }
  \thm{\lbl{weak_KAM_soln_holonomic}
There exists a bounded above weak KAM solution $h$ for $T^-$ in $USC_\sigma(X)$, 
that is 
$T^- h + c(\T) = h$. Equivalently,     
 \begin{equation}\label{calibratedsoln}
 h (\sigma (x)) -\bar{A}(x) +c(\T)=h(x) \quad \text{for all $x\in X$},
 \end{equation}
and 
 \begin{equation}\label{calibratedsoln.+}
\inf_{x^*\in X_x^*}\{h(\tau_{x^*}(x)) + A(x^*,x)\} - c(\T) = h(x)\quad \text{for all $x\in X$}. 
\end{equation}
}
\prf{
It suffices to show that the assumptions of Theorem \ref{gen} are satisfied. 
For that, first note that by the Bogolyubov-Krylov theorem, 
$\sigma$ has an invariant measure, hence $c(\T) = \inf_{\mu} \T(\mu, \mu)<+\infty$. In addition, $\T$ is regular since
$$ \sup_{x\in X}\inf_{\nu\in\mathcal{P}(X)} \mathcal{T}(\delta_x,\nu) \leq \sup_{x\in X}\bar{A}(x) <+\infty. $$

In order to show that $\T$ has bounded oscillation, 
we let 
for each $n\in\mathbb{N}$, $\mu_n\in\mathcal{P}(X)$ be such that $\mathcal{T}_n(\mu_n,(\sigma^n)_\sharp\mu_n) = \inf\limits_{\mu,\nu}\mathcal{T}_n(\mu,\nu)$. Select a subsequence (which we relabel back to $n$) so that $\nu_n := (\sigma^n)_\sharp\mu_n$ converges to some  $\bar{\mu}\in\mathcal{P}(X)$. Then, the C\'esaro averages $\frac{1}{n}\sum_{k=1}^{n}\nu_k$ converge to $\bar{\mu}$. Moreover, $\bar{\mu}$ is $\sigma$-invariant. Indeed, 
$$
\sigma_\sharp\bar{\mu} = \lim_{n\to\infty} \frac{1}{n}\sum_{k=1}^n (\sigma^k)_\sharp\mu_n = \lim_{n\to\infty} \left(\frac{1}{n}\sum_{k=0}^{n-1} (\sigma^k)_\sharp\mu_n +\frac{1}{n}((\sigma^n)_\sharp\mu_n-\mu_n)\right) = \bar{\mu}.
$$
Recall that $\bar{A}(x) = c(x,\sigma(x))$ is lower semi-continuous, so 
\eqs{
\limsup_{n \to \infty} \int_X \bar{A} d\bar{\mu} -\frac{1}{n}\sum_{k=0}^{n-1}\int_X \bar{A} d((\sigma^k)_\sharp\mu_n) \leq 0.
}
Therefore, by selecting a further subsequence (which again we may relabel back to $n$), we may assume that this subsequence has the property that there exists $K>0$ such that 
\eqs{
  \int_X \bar{A} d\bar{\mu}-\frac{1}{n}\sum_{k=0}^{n-1}\int_X \bar{A} d((\sigma^k)_\sharp\mu_n) \leq \frac{K}{n}, \quad n \in \N.
}
We then obtain the desired estimate,
\as{
\limsup_{n\to\infty} \left( nc(\T) - \inf_{\mu,\nu\in\mathcal{P}(X)}\mathcal{T}_n(\mu,\nu) \right)& \leq \limsup_{n\to\infty}\lf( n\mathcal{T}(\bar{\mu},\bar{\mu})-\mathcal{T}_n(\mu_n, (\sigma^n)_\sharp\mu_n)\rt) \\
&= \limsup_{n\to\infty}\lf( n\mathcal{T}(\bar{\mu},\bar{\mu})- \sum_{k=0}^{n-1}\mathcal{T}((\sigma^{k})_\sharp\mu_n,(\sigma^{k+1})_\sharp\mu_n)\rt) \\
&= \limsup_{n\to\infty}\left(n\int_X \bar{A} d\bar{\mu}-\sum_{k=0}^{n-1}\int_X \bar{A} d((\sigma^k)_\sharp\mu_n)\right) \\
& \leq K.
}
Theorem \ref{gen} then yields the existence of $h \in USC_\sigma(X)$ such that 
$T^-h + c(\T) = h.$ In other words, 
$ h (\sigma (x)) -\bar{A}(x) +c(\T)=h(x)$ \text{for all $x\in X$}. 
Since  $\bar{A}(x) = c(x,\sigma(x)$, we have 
\begin{align*}
c(\T)=h(x)-h(\sigma (x))+\bar{A}(x)&=\inf_{x^*\in X^*_{\sigma (x)}, \tau_{x^*}(\sigma(x)) = x}\{ h(x) - h(\sigma(x)) + A(x^*, \sigma(x))\}\\
&= \inf_{x^*\in X_z^*}\{h(\tau_{x^*}(z)) - h(z) + A(x^*,z)\},
\end{align*}
where the last equality holds by making the change of variable $z := \sigma(x)$, along with the fact that $\sigma$ is assumed to be surjective. In other words, for all $x\in X$, 
\[
\inf_{x^*\in X_x^*}\{h(\tau_{x^*}(x)) + A(x^*,x)\} - c(\T) = h(x)
\]
}
\begin{remark} Note that 
\as{
T^+ f(x) = \inf_{z \in X}\{f(z) + c(z,x)\} =
\inf_{ x^* \in X^*_x}\{ f(\tau_{x^*}(x)) + c(\tau_{x^*}(x), x)\}, 
}
and
\as{
c(\tau_{x^*}(x), x) = \inf\{A(z^*,x)\,;\, z^*\in X_x^*, \, \tau_{z^*}(x) = \tau_{x^*}(x)\}. 
}
Consider now the following assumption:
\eq{
\hbox{If $z^*, x^*\in X_x^*$ and $\tau_{z^*}(x) = \tau_{x^*}(x)$, then $A(z^*,x)=A(x^*,x)$.}
}
In this case, if $x^*\in X^*_x$, then 
\as{
c(\tau_{x^*}(x), x) = \inf\{A(z^*,x)\,;\, z^*\in X_x^*, \, \tau_{z^*}(x) = \tau_{x^*}(x)\}=A(x^*,x),
}
and 
\[
T^+ f(x) =\inf\{f(\tau_{x^*}(x)) + A(x^*, x);\, x^*\in X^*_x \}.
\]
It follows that if $h$ is a weak KAM solution for $T^-$, then by (\ref{calibratedsoln.+}), it is a weak solution for $T^+$, that is $T^+ h-c(\T)=h$.

\end{remark}

\subsection*{Ergodic optimization in the deterministic holonomic setting}

We now apply the result of the previous section to the setting of symbolic dynamics. Fix $r\in\mathbb{N}$, and let $M$ be an $r\times r$ transition matrix, whose entries are either $0$ or $1$, specifying the allowable transitions. 
Denote by 
$$\Sigma=\{x\in \{1,..., r\}^\mathbb{N}\ ;\ \forall i\geq 0,\ M(x_i,x_{i+1})=1\} $$
 the set of admissible words,  its dual 
$$\Sigma^*=\{y\in \{1,..., r\}^\mathbb{N}\ ;\ \forall i\geq 0,\ M(y_{i+1},y_{i})=1\},$$ 
and consider the space
$$\hat{\Sigma}=\{(y,x)\in\Sigma^*\times\Sigma \ ;\ M(y_0,x_0)=1\}.$$
For each $x\in\Sigma$, we let $\Sigma_x^*=\{y\in\Sigma^*\ ;\ (y,x)\in\hat{\Sigma}\}$ and assume that 
$\forall x,\ \Sigma_x^*\neq\emptyset.$
We will denote the words of $\Sigma$ with their starting letters, i.e., $(x_0,x_1,...)$ while the words in $\Sigma^*$ will be identified with their ending letters, i.e.,  $(...,y_1,y_0)$. We consider $\Sigma$ and $\Sigma^*$ as metric spaces with the distance $d(x,\bar{x}) = 2^{-\min\{j\in\mathbb{N} \ ;\ x_j\neq\bar{x}_j\}}$. In particular, all these sets are compact.\\
Consider now the two continuous maps $\sigma:\Sigma\to\Sigma$ and $\tau:\hat{\Sigma}\to\Sigma$ defined as 
\eq{\lbl{sigma_and_tau}
\hbox{$\sigma(x_0,x_1,...)=(x_1,x_2,...)$ \quad and \quad  $\tau(y,x)=(y_0,x_0,x_1,...)$.}
}
The map $\sigma$ can be considered as the time-evolution operator.
We will denote $\tau(y,x)$ by $\tau_y(x)$ and 
consider the set of holonomic probability measures
$$ \mathcal{M}_0(\hat{\Sigma}) := \left\{ \mu\in\mathcal{P}(\hat{\Sigma})\ ;\ \int_{\hat{\Sigma}} f(\tau_y(x))-f(x)\ d\mu(y,x) = 0 \right\}. $$
 E. Garibaldi and A. O. Lopes studied an Aubry-Mather theory for symbolic dynamics \cite{GL}; in particular, they prove the following results.
 \thm{[Garibali and Lopes \cite{GL}, Theorem 1, Theorem 4]\lbl{Gari_Lop} Under the set-up described above, given $A \in C(\hat{\Sigma})$, define $\beta_{A} :=  \max_{\hat{\mu}\in \mcal{M}_0(\hat{\Sigma})}\int_{\hat{\Sigma}}A(y,x)d \hat{\mu}(y,x)$. Then
 \eqs{
 \beta_A = \inf_{f \in C(\Sigma)}\max_{(y,x) \in \hat{\Sigma}}\{A(y,x) + f(x) - f(\tau_y(x))\}.
 }
If $A \in C^{0,\theta}(\hat{\Sigma})$ is $\theta$-H\"older continuous, then there exists a function $u \in C^{0,\theta}(\Sigma)$ such that
 \eqs{
u(x) = \min_{u \in \Sigma_{x}^*}\{u(\tau_y(x)) - A(y,x) + \beta_A\}.
 }
 } 
By applying our results in the previous subsection in this symbolic dynamics setting, we obtain the following under a much weaker assumption on the potential $A$. 

\prop{Under the set-up described at the beginning of this section, and using the notation $\beta_A$ of Theorem \ref{Gari_Lop} for a given potential $A\in C(\hat\Sigma)$, the following then hold:
\al{\label{holoduality}
\beta_A &= \sup_{\mu \in \mcal{P}_\sigma(\Sigma)}\{\int_{\Sigma}\bar{A} d\mu\}\\
 &=\inf_{f\in C(\Sigma)}\sup_{x\in \Sigma}\{f(\sigma (x)) - f(x) +\bar{A}(x)\}\\
&= \inf_{f \in C(\Sigma)}\sup_{(y,x) \in \hat{\Sigma}}\{ f(x)-f(\tau_y(x)) + A(y,x)\},
}
where $\bar{A}(x) := \sup\{A(y,x)\,;\, y \in \Sigma^*_x, \tau_y(\sigma(x)) = x\}$. Moreover, there exists $h\in USC_\sigma(\Sigma)$   such that  
\begin{equation} \inf_{y\in\Sigma_x^*} \{h(\tau_y(x))-A(y,x) + \beta(A)\} = h(x) \quad  \forall x\in\Sigma. 
 \end{equation}
}
\prf{Apply Proposition \ref{symbolicdynamics}, Theorem \ref{optimization_equalities}, and Theorem \ref{weak_KAM_soln_holonomic}, with $-A$ instead of $A$, and with the following identifications: 
\eqs{
X := \Sigma,\quad Y := \Sigma^*,\quad Y_x := \Sigma^*_{x}, \quad\hat{X}:= \hat{\Sigma}, 
}
and $\sigma$, $\tau_y$, as in \refn{sigma_and_tau}. 
}

\subsection*{Ergodic optimization in the stochastic holonomic setting} 

We now propose the following model: 
Consider inductively, the following sequence: Let $X^0\in\Sigma$ be a random word, $B^0\in\Sigma_{X^0}^*$ a random ``noise'', and let $\bar{X}^{0} :=\tau_{B^0}(X^0)$. We then let  $Y^{0}\in\Sigma_{\bar{X}^{0}}^*$ be a random ``control'', and consider $X^{1}:=\tau_{Y^{0}}(\bar{X}^{0})$. We then choose $B^1 \in \Sigma_{X^1}^*$ and let $Y^1 \in \Sigma_{\bar{X}^1}^*$.

Iterating this process, an entire random past trajectory of $X^0$ is represented via the random family $(X^{n})_n\in\Sigma^{\mathbb{N}}$. The goal is to minimise the long time average cost, 
$$\lim_{n\to\infty}\frac{1}{n}\mathbb{E}[\sum_{i=0}^{n-1}A(Y^i,\bar{X}^i)]$$ 
among all possible such choices.

We assume that $B^0$, $Y^0$, satisfy the following ``martingale-type" property: 
\begin{equation}\label{mtgassumption}
\E[f(Y^0, \tau_{B^0}(X^0))|X^0 = \sigma(x)] = \E[f(Y^0, x)] \quad \hbox{for any $f \in C(\hat{\Sigma})$.}
\end{equation}

For each such trajectory, consider for each $n\in\mathbb{N}$,  the measure $\mu_n\in\mathcal{P}(\hat{\Sigma})$ defined via
\eqs{
\int_{\hat{\Sigma}} \phi d\mu_n := \frac{1}{n}\sum_{i=0}^{n-1}\mathbb{E}[\phi(Y^i,\bar{X}^i)], \quad  \phi\in C(\hat{\Sigma}).
}
From $(\mu_n)_n$, one can extract a subsequence converging to some measure $\mu^{(X^i)_i}$ by compactness of $\mcal{P}(\hat{\Sigma})$. We denote 
$$ 
\mathcal{M}_0 = \overline{\{\mu^{(X^i)_i}\}}\subset \mathcal{M}(\hat{\Sigma}),
$$
as the closure of all such $\mu^{(X^i)_i}$. For $f\in C(\Sigma)$ and $(y,x)\in\hat{\Sigma}$, denote 
$$
\frac{1}{2}D^yf(x) := f(\tau_y(x))-f(x)-\frac{f(\tau_y(x))-2f(x)+f(\sigma(x))}{2} = \frac{f(\tau_y(x))-f(\sigma(x))}{2}.
$$
Note that the assumption made on the random noise $B^0$ yields an It\^o-type formula:  For all $f \in C(\Sigma)$, $x\in \Sigma$, with $B^0 \in \Sigma^*_{\sigma(x)}$ $Y^0 \in \Sigma^*_{\tau_{B^0}(x)}$,
$$
\E[f(\tau_{Y^0}(\tau_{B^0}(\sigma(x)))) - f(\sigma(x))] =  \E [D^{Y^0}f(x)].
$$
Let also
$$\mathcal{N}_0 = \{\mu\in\mathcal{M}(\hat{\Sigma})\ |\ \forall f\in C(\Sigma),\ \int_{\hat{\Sigma}} D^yf(x)d\mu(y,x)=0\},$$ 
which is closed in $\mathcal{M}(\hat{\Sigma})$ as a kernel of a continuous linear map. 

\lma{ We have $\mathcal{M}_0\subset \mathcal{N}_0$. 
}

\begin{proof} For each measure $\mu^{(X^i)_i}\in\mathcal{M}_0$,
\begin{align*}
\int_{\hat{\Sigma}}D^y f(x)d\mu_n(y,x) = \frac{1}{n}\mathbb{E}\left[\sum_{i=0}^{n-1} f(X^{i+1})-f(X^i)\right]
= \frac{1}{n}\mathbb{E}\left[f(X^n)-f(X^0)\right] 
\end{align*}
which tends to zero as $n \to \infty$ since $f$ is bounded.
\end{proof}

\thm{ \lbl{stochastic_symbolic_dynamics}With the above notation, we have the following 
\al{
\inf_{\mu\in\mathcal{N}_0\cap\mathcal{P}(\hat{\Sigma})} \int_{\hat{\Sigma}} Ad\hat{\mu} = \sup_{f\in C(\Sigma)}\inf_{(y,x)\in\hat{\Sigma}} D^yf(x)+A(y,x), \label{stochasticduality}
}
and there exists a $g\in USC_\sigma (\Sigma)$ such that $h:= -g$ satisfies
\begin{equation}\label{HJ-discrete}
\inf_{\mu\in\mathcal{N}_0\cap\mathcal{P}(\hat{\Sigma})}\int_{\hat{\Sigma}} Ad\hat{\mu}= \inf_{y\in\Sigma_x^*} \{D^yh(x)+A(y,x)\}.
\end{equation}
}
\begin{proof}
The last equality in \refn{stochasticduality}, i.e., 
is again an application of the Fenchel-Rockafellar duality, similar to the previous section. Indeed, consider the functions $h_1, h_2: C(\hat{\Sigma}) \to \R\cup\{+\infty\}$ defined by
$
h_1(\phi) := \sup_{(y,x)\in\hat{\Sigma}}\{-\phi(y,x)+A(y,x)\}
$
and
\begin{equation*}
h_2(\phi) := \begin{cases}
0 & \text{if }\phi\in\overline{\{g \in C(\hat{\Sigma})\,;\, g(y,x) = D^yf(x) \text{ for some }f \in C(\Sigma)\}}\\
+\infty & \text{otherwise.}
\end{cases}
\end{equation*}
Note that $h_1$ and $h_2$ are convex, with $|h_1(\phi) - h_1(\tilde{\phi})| \leq \|\phi - \tilde{\phi}\|_\infty$. To compute their Legendre transform, we have \as{
h_1^*(\hat{\mu}) = \sup_{\phi\in C(\hat{\Sigma})} \{\int_{\hat{\Sigma}}\phi d\hat{\mu}-h_1(\phi)\}\geq \sup_{\lambda \in \R}\{(\lambda +1)\hat{\mu}(\hat{\Sigma}) + \inf_{(y,x)\in\hat{\Sigma}}\{A(y,x)\},
}
and therefore if $\hat{\mu}(\hat{\Sigma}) \neq -1$, the supremum is $+\infty$. Suppose now that $\hat{\mu}(\hat{\Sigma}) = -1$, but $-\hat{\mu} \notin \mcal{P}(\hat{\Sigma})$. Then, there exists functions $\phi_n \in C(\hat{\Sigma})$ such that $\phi_n \leq 0$ and $\lim_{n \to \infty}\int_{\hat{\Sigma}}\phi_nd\hat{\mu} = +\infty$. Hence
\as{
h_1^*(\hat{\mu}) \geq \int_{\hat{\Sigma}}\phi_n d\hat{\mu}-h_1(\phi_n)\geq \int_{\hat{\Sigma}}\phi_n d\hat{\mu} - \sup_{(y,x) \in \hat{\Sigma}}A(y,x)
}
and $h_1^*(\hat{\mu}) = +\infty$. Finally suppose $-\hat{\mu} \in \mcal{P}(\hat{\Sigma})$. Then we have
$
h_1^*(\phi) \geq \int_{\hat{\Sigma}}A d\hat{\mu}-h_1(A) = \int_{\hat{\Sigma}}A d\hat{\mu},
$
while on the other hand from $\phi + h_1(\phi) \geq A$, we have  $\int_{\hat{\Sigma}}(\phi -h_1(\phi)) d\hat{\mu} \leq  \int_{\hat{\Sigma}}A d\hat{\mu}$ (recall $\hat{\mu}(\hat{\Sigma}) = -1$), so that
\as{
h_1^*(\phi) = \sup_{\phi\in C(\hat{\Sigma})} \{\int_{\hat{\Sigma}}(\phi -h_1(\phi)) d\hat{\mu}\}\leq \sup_{\phi\in C(\hat{\Sigma})} \{\int_{\hat{\Sigma}}(\phi - \phi - A) d\hat{\mu}\} = \int_{\hat{\Sigma}}A d\hat{\mu}.
}
Consequently,
$$
h_1^*(\hat{\mu}) = \begin{cases}
\int_{\hat{\Sigma}} A d\hat{\mu} & \text{if }\hat{\mu}\in\mathcal{P}(\hat{\Sigma}),\\
+\infty & \text{otherwise.}
\end{cases}
$$
Similarly, we have
$$
h_2^*(\hat{\mu}) = \begin{cases}
0 & \text{if }\hat{\mu}\in\mathcal{N}_0\\
+\infty & \text{otherwise.}
\end{cases}
$$
Indeed, if $\hat{\mu}\notin \mathcal{N}_0$, there exists $\bar{f}\in C(\Sigma)$ such that $\int_{\hat{\Sigma}} D^y\bar{f}(x)d\hat{\mu}(y,x) \neq 0$. Hence, we replacing $\bar{f}$ with $\lambda\bar{f}$, $\lambda \in \R$, we see that
$
h_2^*(\hat{\mu}) = \sup_{f\in C(\Sigma)} \int_{\hat{\Sigma}} D^yf(x) d\hat{\mu}(y,x)  = +\infty.
$

If $\hat{\mu}\in\mathcal{N}_0$, by definition, $h_2^*(\hat{\mu})=0$, then by Fenchel-Rockafellar, 
\eqs{
\inf_{\phi \in C(\hat{\Sigma})}\{ h_1(\phi)+ h_2(\phi)\} = \sup_{\hat{\mu} \in \mcal{M}(\hat{\Sigma})}\{ -h_1^*(-\hat{\mu})-h_2^*(\hat{\mu})\},
}
which completes the proof of \refn{stochasticduality}.

To prove (\ref{HJ-discrete}) consider the functional $\T: \mcal{P}(\Sigma) \times \mcal{P}(\Sigma) \to \R\cup\{+\infty\}$ defined via
$$ \mathcal{T}(\mu,\nu) := \inf\left\{ \mathbb{E}[A(Y^0,\bar{X}^0)]\ \left|\ \begin{array}{l l}
X^0\sim\nu & \\
\bar{X}^0=\tau_{B^0}(X^0) & B^0\in\Sigma_{X_0}^*\\
X^1 = \tau_{Y^0}(\bar{X}^0)\sim\mu & Y^0\in\Sigma_{\bar{X}^0}^* 
\end{array} \right.\right\}
$$
We claim that $\T$ is a forward linear transfer with
$$ 
T^+f(x) := \inf\{\E[f(\tau_{Y^0}(\tau_{B^0}(x))+ A(Y^0,\tau_{B^0}(x))]\,;\,Y^0 \in \Sigma^*_{\tau_{B^0}(x)}\}.
$$
 Indeed, we have
\as{
\int_{\Sigma}T^+f d\nu &= \int_{\Sigma}\lf(\inf\{\E[f(\tau_{Y^0}(\tau_{B^0}(x))+ A(Y^0,\tau_{B^0}(x))]\,;\,Y^0 \in \Sigma^*_{\tau_{B^0}(x)}\}\rt)d\nu\\
&= \inf\{\E[f(\tau_{Y^0}(\tau_{B^0}(X^0))+ A(Y^0,\tau_{B^0}(X^0))]\,;\,Y^0 \in \Sigma^*_{\tau_{B^0}(X^0)},\, X^0 \sim \nu\}.
}
Note then that $\sup_{f \in C(\Sigma)}\{\int_{\Sigma}T^+f d\nu - \int_{\Sigma}f d\mu\}$ will be $+\infty$, unless $\tau_{Y^0}(\tau_{B^0}(X^0)) \sim \mu$, in which case the terms in $f$ cancel, leaving 
\as{
\sup_{f \in C(\Sigma)}\{\int_{\Sigma}T^+f d\nu - \int_{\Sigma}f d\mu\} &= \inf\{\E[A(Y^0,\tau_{B^0}(X^0))]\,;\,Y^0 \in \Sigma^*_{\tau_{B^0}(X^0)},\, X^0 \sim \nu,\, \tau_{Y^0}(\tau_{B^0}(X^0)) \sim \mu\}\\
&= \T(\mu,\nu).
}
We now show that the hypotheses of Theorem \ref{gen} to the (backward) linear transfer $\tilde{\T}(\mu,\nu) := \T(\nu,\mu)$, are satisfied.

First, $\tilde \T$ is regular since
or a fixed $x \in \Sigma$, take any random noise $B^0 \in \Sigma^*_{x}$ and random strategy $Y^0 \in \Sigma^*_{\tau_{B^0}(x)}$, and denote the law of $\tau_{Y^0}(\tau_{B^0}(x))$ by $\bar{\nu}_x$. Then
\eqs{
\sup_{x\in\Sigma}\inf_{\nu\in\mathcal{P}(\Sigma)} \tilde{\T}(\delta_x, \nu) \leq \sup_{x\in\Sigma}\T(\bar{\nu}_x, \delta_x) \leq \sup_{x\in\Sigma}\E [A(Y^0, \tau_{B^0}(x))] \leq  \sup_{\hat{\Sigma}} A <+\infty.
}
That $c(\tilde{\T})< +\infty$ and $\tilde \T$ is of bounded oscillation follow similarly as in the proof of Theorem \ref{weak_KAM_soln_holonomic}. 

Note that
\as{
c(\T)  = \inf_{\mu}\T(\mu,\mu) &= \lim_{n\to\infty}\frac{1}{n}\inf_{\mu,\nu}\mathcal{T}_n(\mu,\nu) \\
&=  \lim_{n \to \infty}\inf_{\mu,\nu} \frac{1}{n}\sum_{i = 0}^{n-1}\inf\{ \E [A(Y^i, \bar{X}^i)]\}\\
 &=  \lim_{n\to\infty}\inf_{\mu,\nu}\inf_{X^0\sim\mu,X^1\sim\nu} \int_{\hat{\Sigma}}  Ad\mu_n^{(Y^i)_i} \\
 &=  \inf_{\hat{\mu}\in\mathcal{M}_0\cap\mathcal{P}(\hat{\Sigma})} \int_{\hat{\Sigma}} Ad\hat{\mu}.
}
Therefore, we have satisfied the hypotheses of Theorem \ref{gen}, and conclude the existence of a $h \in USC_\sigma(\Sigma)$ such that 
$
\tilde{T}^- h(x) + c(\tilde{\T}) = h(x), \forall x \in \Sigma,
$
which since $\tilde{T}^-h = -T^+(-h)$ and $c(\tilde{\T}) = c(\T)$, implies
$ 
T^+(-h) - c(\T) = -h,
$ 
  or with $g := -h$, 
\begin{equation}\label{stochasticsymbolic}
T^+ g(x) - c(\T) = g(x).
\end{equation}
Replacing $x$ with $\sigma(x)$ for $x \in \Sigma$ in equation (\ref{stochasticsymbolic}), we have
$
T^+ g(\sigma(x)) - g(\sigma(x)) = c(\T). 
$

Recalling now the definition of $T^+$ and the martingale assumption (\ref{mtgassumption}), we can write
\begin{align}
c(\T) = T^+ g(\sigma(x)) - g(\sigma(x)) &= \inf_{Y^0}\{\E[g(\tau_{Y^0}(\tau_{B^0}(\sigma(x))) + A(Y^0, \tau_{B^0}(\sigma(x)))]\} - g(\sigma(x))\no\\
&= \inf_{Y^0}\{ \E [g(\tau_{Y^0}(x)) + A(Y^0, x)]\} - g(\sigma(x))\no\\
&= \inf_{y \in \Sigma^*_{x}}\{ g(\tau_y(x)) - g(\sigma(x)) + A(y,x)\}\no\\
&= \inf_{y \in \Sigma^*_{x}}\{ D^y g(x) + A(y,x)\}.\lbl{stochastic_KAM}
\end{align}
Finally, the corresponding Ma\~n\'e constant is given by 
\as{
c(\T)  = \inf_{\mu}\T(\mu,\mu) &= \lim_{n\to\infty}\frac{1}{n}\inf_{\mu,\nu}\mathcal{T}_n(\mu,\nu) \\
&=  \lim_{n \to \infty}\inf_{\mu,\nu} \frac{1}{n}\sum_{i = 0}^{n-1}\inf\{ \E [A(Y^i, \bar{X}^i)]\}\\
 &=  \lim_{n\to\infty}\inf_{\mu,\nu}\inf_{X^0\sim\mu,X^1\sim\nu} \int_{\hat{\Sigma}}  Ad\mu_n^{(Y^i)_i} \\
 &=  \inf_{\hat{\mu}\in\mathcal{M}_0\cap\mathcal{P}(\hat{\Sigma})} \int_{\hat{\Sigma}} Ad\hat{\mu}.
}
Since $\mcal{M}_0 \subset \mcal{N}_0$, it follows that $c(\T) \geq \inf_{\hat{\mu}\in\mathcal{N}_0\cap\mathcal{P}(\Sigma)} \int_{\hat{\Sigma}} Ad\hat{\mu}$.
In view of the duality (\ref{stochasticduality}) together with \refn{stochastic_KAM}, this implies that this inequality is actually an equality, hence (\ref{HJ-discrete}) holds and concludes the proof.
\end{proof}


\begin{thebibliography}{10}

\bibitem{Au} Aubry, S.: {\em The twist map, the extended Frenkel-Kontorova model and the devil?s staircase,} Phys. D 7 (1983) p. 240-258.


\bibitem{ABC} J. J. Alibert, G. Bouchitt\'e, T. Champion, {\em A new class of costs for optimal transport planning}, European Journal of Applied Mathematics, Vol 30, 6 (2019) p.1229-1263.
 
\bibitem{AGL} Alvarez, L., Guichard, F., Lions, P. L., {\em Axioms and fundamental equations of image processing}, Arch. Rational Mech. Anal., Vol. 123, Issue 3 (1993) p. 199-257.   

\bibitem{BBP} J. Backhoff-Veraguas,M. Beiglb\"ock, G. Pammer, {\em Existence, duality, and cyclical monotonicity for weak transport costs}, Calculus of Variations and PDE, Vol 58, 6 (2019) p.1-28.

\bibitem{BB1} P. Bernard and B. Buffoni, \emph{ Optimal mass transportation and Mather theory},  Journal of the European Mathematical Society, 9(1) (2007) p. 85-121.  

\bibitem{BB2} P. Bernard and B. Buffoni, \emph{ Weak KAM pairs and Monge-Kantorovich duality},  Advanced Studies in Pure Mathematics, 47(2) (2007) p. 397-420.  

\bibitem{BS78}
D.~P. Bertsekas and S.~E. Shreve, \emph{Stochastic optimal control},
  Mathematics in Science and Engineering, vol. 139, Academic Press Inc.,
New York, 1978.

  \bibitem{BGL} S.~G. Bobkov, I.~Gentil, and M.~Ledoux, 
\newblock {\em Hypercontractivity of {H}amilton-{J}acobi equations.}
\newblock Journal de {M}ath\'ematiques {P}ures et {A}pliqu\'ees,
  80(7)(2001) p. 669-696.

\bibitem{Bowen}  R. Bowen. {\em Equilibrium States and Ergodic Theory of Anosov Diffeomorphisms}, Lecture Notes in Mathematics 470, Springer-Verlag (1975)  

\bibitem{BG1} M. Bowles, N. Ghoussoub, \emph{ A Theory of Transfers: Duality and convolution}, (April 16, 2018, Revised October 24, 2018) 41 pp.   https://arxiv.org/abs/1804.08563

\bibitem{BG2} M. Bowles, N. Ghoussoub,\emph{ Mather Measures and Ergodic Properties of Kantorovich Operators}, (May 7, 2019, Revised June 20, 2019, 2nd revision on October 24, 2019) 109 pp.
https://arxiv.org/abs/1905.05793

\bibitem{B} Y. Brenier, \emph{Polar factorization and monotone rearrangement of vector-valued functions}, Communications on Pure and Applied Mathematics, 44(4) (1991) p. 375-417. 

\bibitem{Ch} {G. Choquet} \newblock {\em Lectures on Anaysis}, 
\newblock { W. A. Benjamin, Inc. 1968}.

 \bibitem{DM} C. Dellacherie, P. A. Meyer, {\em Probabilit\'es et Potentiel}, Chapter X, Hermann (1997).
 
 
\bibitem{DGKP} S. Dweik, N. Ghoussoub, Y. Kim, A. Palmer, {\em Stochastic Optimal Transport With Free End Time}, Ann. de l'Institut Henri Poincar\'e, Prob. \& Stats, Vol. 57, 2 (2021) p. 700-725.

\bibitem{E} L. Evans, \emph{ A survey of partial differential equations methods in weak KAM theory},  Communications on Pure and Applied Mathematics, 57(4):445-480, 2004.

\bibitem{F} A. Fathi, \emph{ Weak KAM Theorem in Lagrangian Dynamics}, Version X, (2018).  

\bibitem{FS}
W. H. Fleming and H.M. Soner, \emph{Controlled Markov Processes and Viscosity Solutions}, Springer-Verlag, vol. 25, New York, 1993. 
  
\bibitem{Ga} E. Garibaldi, \emph{ Ergodic Optimization in the Expanding Case: Concepts, Tools and Applications}, Springer International Publishing, Cham, 1st edition, 2017. 
  
\bibitem{GL} E. Garibaldi and A. Lopes, \emph{On the Aubry-Mather theory for symbolic dynamics},  Ergodic Theory and Dynamical Systems, 28(3):791-815, 2008.  

\bibitem{GLR} I. Gentil, C. Leonard, L. Ripani, {\em About the analogy between optimal transport and minimal entropy,}  Annales de la Facult\'e des Sciences de Toulouse. Math\'ematiques. S\'erie 6, Universit\'e Paul Sabatier 26, 3 (2017),  pp. 569-600.

\bibitem{GM} N. Ghoussoub, B. Maurey, {\em Plurisubharmonic martingales and barriers in complex quasi- Banach spaces. Annales Inst. Fourier,} Tome 39-Fasc 4 (1989) p. 1007-1060


\bibitem{G} N. Ghoussoub, {\em Linear transfers as minimal costs of dilations of measures in balayage order}, Pure and Applied Functional Analysis, In press (2022) 44 pp. https://arxiv.org/abs/2212.05152

\bibitem{GKL1} N. Ghoussoub, Y. Kim, T. Lim\emph{ Structure of Optimal Martingale Transport in General Dimensions}, Ann. of Probability, Vol. 47, No. 1, (2019) p. 109-164

\bibitem{GKL2} N. Ghoussoub, Y. Kim, and T. Lim, {\em  Optimal Brownian Stopping When the Source and Target are Radially Symmetric Distributions}, SIAM Journal on Control and Optimization, Vol. 58, No. 5 (2020) pp. 2765- 2789

\bibitem{GKP1} N. Ghoussoub, Y. Kim, A. Palmer, \emph{ A Solution to the Monge Transport Problem for Brownian Martingales}, Ann. of Probability, Vol. 49, 2, (2021) p. 877-907

\bibitem{GKP2} N. Ghoussoub, Y. Kim, A. Palmer, \emph{ PDE Methods For Optimal Skorokhod Embeddings}, Calc. of Variations and PDEs, Calc. Var. 58, 113 (2019) 

\bibitem{GKP3} N. Ghoussoub, Y. Kim, A. Palmer,  {\em Optimal Stopping of Stochastic Transport Minimizing Submartingale Costs}, Trans. AMS, Vol. 374, 10 (2021) p. 6963-6989.

\bibitem{Gom} D. Gomes, \emph{Viscosity solution methods and the discrete Aubry-Mather problem},  Discrete and Continuous Dynamical Systems, 13(1):103-116 (2005).   

\bibitem{Go} Gozlan, {C.} Roberto, {P.M.} Samson, and {P.} Tetali, \emph{Kantorovich
  duality for general transport costs and applications}, J. of Func. Analysis 273, no. 11,  (2017) 3327-3405.
  
   \bibitem{GoJu} N.~Gozlan, N.~Juillet, {\em  On a mixture of Brenier and Strassen theorems,} Vol. 120, Issue 3, (2020), p. 434-463. 
  
  \bibitem{Hl} Pierre Henry-Labord\`ere, {\em Model-free Hedging: A Martingale Optimal Transport Viewpoint}, Chapman and Hall/CRC (2017) 190 pages. 
  
  \bibitem{K} U. Krengel, {Ergodic Theorems,} Volume 6, De Gruyter Studies in Mathematics (1985).
  
\bibitem{LP} P. Lions, G. Papanicolaou, and S. Varadhan, \emph{ Homogenization of Hamilton-Jacobi equations} (1987).

\bibitem{Man}  R.~Man\'e,  {\em Lagrangian flows: The dynamics of globally minimizing orbits,} Bol. Soc. Bras. Mat,
28 (1997) p. 141-153.

  \bibitem{Mar}
K.~Marton, \emph{A measure concentration inequality for contracting {M}arkov
  chains}, Geom. Funct. Anal. \textbf{6} (1996), no.~3, 556-571.
  
  \bibitem{Mart} D. Martino, Personal communication, August 2019.
  
  
\bibitem{Mat} J. N. Mather, \emph{Action minimizing invariant measures for positive definite Lagrangian systems},  Mathematische Zeitschrift, 207(1):169-207 (1991)  

\bibitem{Meyer} P. A. Meyer, {\em Travaux de H. Rost en th\'eorie du balayage}, S\'eminaire de probabilit\'es de Strasbourg, Volume 5 (1971), pp. 237-250.

  \bibitem{MT}
{ T. Mikami and M. Thieullen,}
\newblock {\em Duality theorem for the stochastic optimal control problem.}
\newblock Stoch. Process. Appl. 116, no. 12, (2006)1815-1835

\bibitem{R} V. Roos, \emph{Variational and viscosity operators for the evolutive Hamilton-Jacobi equation},  Communications in Contemporary Mathematics, 27 (2017).

\bibitem{Sa} F. Santambrogio, {\em Optimal Transport for applied mathematicians}, Volume 87 of Progress in Nonlinear Differential Equations and their applications, Springer (2015).

\bibitem{Sion} M. Sion, {\em On general minimax theorems}, Pacific J. Math. 8(1): (1958) p. 171-176.

\bibitem{So} A. Sorrentino, \emph{ Action-minimizing Methods in Hamiltonian Dynamics: An Introduction to Aubry-Mather Theory},  Princeton University Press (2015). 
 
  \bibitem{St}
V.~Strassen, \emph{The existence of probability measures with given marginals},
  Ann. Math. Statist. \textbf{36} (1965) p. 423--439.
  
\bibitem{Ta}
M.~Talagrand, 
\newblock {\em Transportation cost for gaussian and other product measures,}
\newblock { Geometric and Functional Analysis}, Vol. 6 (1996) p. 587-600.

\bibitem{V}
C.~Villani, 
\newblock {\em Topics in Optimal Transportation}.
\newblock Graduate Studies in Mathematics 58. American Mathematical Society,
  Providence RI, 2003.
\end{thebibliography}
\end{document}